\documentclass[12pt,reqno]{amsart}
\usepackage{amssymb}
\usepackage{mathtools}
\usepackage{txfonts}
\usepackage{eucal}
\usepackage{extarrows}
\usepackage{shuffle}
\usepackage[all]{xy}
\usepackage{tikz}
\usepackage{graphicx}
\usepackage{float}
\usepackage{xspace}
\usepackage[a4paper,body={16.3cm,22.8cm},centering]{geometry}
\usepackage[colorlinks,final,backref=page,hyperindex,pdftex]{hyperref}
\usepackage[thicklines]{cancel}

\newcommand{\nc}{\newcommand}
\newcommand{\delete}[1]{}

\nc{\mlabel}[1]{\label{#1}}  
\nc{\mcite}[1]{\cite{#1}}  
\nc{\mref}[1]{\ref{#1}}  
\nc{\mbibitem}[1]{\bibitem{#1}} 

\delete{
\nc{\mlabel}[1]{\label{#1}  
{\hfill \hspace{1cm}{\small\tt{{\ }\hfill(#1)}}}}
\nc{\mcite}[1]{\cite{#1}{\small{\tt{{\ }(#1)}}}}  
\nc{\mref}[1]{\ref{#1}{{\tt{{\ }(#1)}}}}  
\nc{\mbibitem}[1]{\bibitem[\bf #1]{#1}} 
}
\newtheorem{theorem}{Theorem}[section]
\newtheorem{prop}[theorem]{Proposition}
\newtheorem{lemma}[theorem]{Lemma}
\newtheorem{coro}[theorem]{Corollary}

\theoremstyle{definition}
\newtheorem{defn}[theorem]{Definition}
\newtheorem{remark}[theorem]{Remark}

\newtheorem{prop-def}{Proposition-Definition}[section]

\newcommand\cal[1]{\mathcal{#1}}

\newcommand\alphlist{a,b,c,d,e,f,g,h,i,j,k,l,m,n,o,p,q,r,s,t,u,v,w,x,y,z}
\newcommand\Alphlist{A,B,C,D,E,F,G,H,I,J,K,L,M,N,O,P,Q,R,S,T,U,V,W,X,Y,Z}
\newcommand\getcmds[3]{\expandafter\newcommand\csname #2#1\endcsname{#3{#1}}}
\makeatletter
\@for\x:=\alphlist\do{\expandafter\getcmds\expandafter{\x}{frak}{\mathfrak}}
\@for\x:=\Alphlist\do{\expandafter\getcmds\expandafter{\x}{frak}{\mathfrak}}
\makeatother


\nc{\bfk}{{\bf k}}
\font\cyr=wncyr10

\newfont{\scyr}{wncyr10 scaled 550}
\nc{\sha}{\mbox{\cyr X}}
\nc{\ssha}{\mbox{\bf \scyr X}}

\nc{\id}{\mathrm{id}}
\nc{\Id}{\mathrm{Id}}
\nc{\lbar}[1]{\overline{#1}}
\nc{\ot}{\otimes}
\nc{\dep}{\mathrm{dep}}

\nc{\tred}[1]{\textcolor{red}{#1}} \nc{\tgreen}[1]{\textcolor{green}{#1}}
\nc{\tblue}[1]{\textcolor{blue}{#1}} \nc{\tpurple}[1]{\textcolor{purple}{#1}}

\nc{\li}[1]{\tpurple{\underline{Li:}#1 }}
\nc{\liadd}[1]{\tpurple{#1}}
\nc{\xing}[1]{\tblue{\underline{Xing:}#1 }}
\nc{\dominique}[1]{\tblue{\underline{Dominique: }#1 }}
\nc{\yuan}[1]{\tred{\underline{Yuan:}#1 }}
\nc{\markus}[1]{\tred{\underline{Markus:} #1}}
\nc\hu[1]{\tpurple{\underline{Huhu:}#1}}
\nc\Hu[1]{\tgreen{#1}}

\usetikzlibrary{calc}
\newlength\xch
\xch=0.50cm%
\newsavebox\dbox
\sbox\dbox{\tikz{\fill (0,0) circle (0.05cm);}}
\newif\ifqdd
\newif\ifzdd
%

%
%
%
%
%
%
%
%
%





\newcommand\scopeclip[1]{\begin{scope}
\clip(-1.1,-0.5)rectangle(1.1,1);#1\end{scope}}
\newcommand\XX[2][]{%
\tikz[line width=0.15ex,x=0.5cm,y=0.5cm,baseline,inner sep=1.5pt,
every node/.style={font=\scriptsize},#1]{
\scopeclip{\draw (135:1.5)--(0,0)--(45:1.5) (0,-0.5)--(0,0);}#2}}
\newcommand\xx[3]{%
\scopeclip{\draw(#1/10,#2/10)--+(#3*45:2.5);}}
\newcommand\xxl[2]{\xx{#1}{#2}3}
\newcommand\xxr[2]{\xx{#1}{#2}1}
\newcommand\xxlr[2]{\xxl{#1}{#2}\xxr{#1}{#2}}
\newcommand\xxh[6]{
\draw(#1/10,#2/10)+(0.5*#3*45+0.5*#4*45:#6) node[above] {$#5$};}
\newcommand\xxhu[4][0.15]{\xxh{#2}{#3}13{#4}{#1}}
\newcommand\stree[1]{\XX{\xxhu[0.25]00{#1}}}

\newcommand\Dtree[1]{%
\tikz[line width=0.15ex,x=0.5cm,y=0.5cm,baseline]{%
\draw(0.7,0.7)--(0,0)--(-0.7,0.7)(0,-0.5)--(0,0);
\node[above right,inner sep=0pt] at (-0.25,0.55) {$\scriptstyle#1$};}}

\newcommand\DDDtree[2]{%
\tikz[line width=0.15ex,x=0.5cm,y=0.5cm,baseline]{%
\draw(0,-0.5)--(0,0)(15:1.3)--(0,0)--(165:1.3)(70:1.3)--(0,0)--(110:1.3);
\node[below left,inner sep=0pt] at (110:1.3) {$\scriptstyle#1$};
\node[below right,inner sep=0pt] at (70:1.3) {$\scriptstyle#2$};
\node at (0.05,1) {$\scriptstyle\cdots$};
}}

\nc{\dnx}{\Delta_n A} \nc{\dx}{\Delta A} \nc{\dgp}{{\rm deg_{P}}}
\nc{\dgt}{{\rm deg_{T}}} \nc{\dg}{{\rm deg}} \nc{\ida}{ID($A$)} \nc{\tu}{\tilde{u}} \nc{\tv}{\tilde{v}}
\nc{\nr}{\calr_n} \nc{\nz}{\calz_n} \nc{\fun}{\cala_{n,d}}
 \nc{\fbase}{\calb} \nc{\LF}{\mathrm{RF}} \nc{\FFA}{\mathrm{LF}} \nc{\irr}{\mathrm{Irr}}
 \nc{\result}{\bfk\mathrm{Irr}(S_n)}  \nc{\I}{I_{\mathrm{ID},n}^0}
 \nc{\nrs}{\calr_n^\star} \nc{\ii}{\mathrm{I}} \nc{\iii}{\mathrm{II}}
\nc{\intl}{{\rm int}}\nc{\ws}[1]{{#1}}\nc{\deleted}[1]{\delete{#1}}\nc{\plas}{placements\xspace}

\nc{\bim}[1]{#1}  \nc{\shaop}{\sha_{\Omega}^{+}}  \nc{\shao}{\sha_{\Omega}}
\nc{\bbim}[2]{#1 #2} \nc{\bbbim}[2]{#1,\, #2} \nc{\RBF}{{\rm RBF}}
\nc{\frb}{F_{\RB}} \nc{\shaf}{\ssha_{\tiny{\Omega}}} \nc{\sham}{\diamond_{\tiny{\Omega}}}
\nc{\lf}{\lfloor} \nc{\rf}{\rfloor} \nc{\shan}{\ssha_{\lambda}}
\nc{\rlex}{{\rm {lex}}} \nc{\bb}{\Box} \nc{\ra}{\rightarrow}
\nc{\e}{{\rm {e}}}
\nc{\DDF}{\mathrm{DD}(X,\,\Omega)}\nc{\DTF}{\mathrm{DT}(X,\,\Omega)} \nc{\DT}{\mathrm{DT}'(\Omega,\,V)}
\nc{\bra}{\mathrm{bra}} \nc{\bre}{\mathrm{bre}}
\nc{\dec}{\mathrm{dec}} \nc{\diamondw}{\diamond_{w}}
\nc{\type}{\mathrm{type}}
\nc{\ddx}{\mathrm{DD}(X)}
\nc{\dfx}{\mathrm{DT}(\Delta X)}

\nc\caF[1]{\cal{F}_{#1}(X,\,\Omega)}
\nc\calt{\cal{T}(X,\,\Omega)} \nc\caltn{\cal{T}_n(X,\,\Omega)}
\nc\caltbin{\cal{T}_b(X,\,\Omega)}
\nc\calta{\cal{T}_0(X,\,\Omega)}
\nc\caltb{\cal{T}_1(X,\,\Omega)}
\nc\caltc{\cal{T}_2(X,\,\Omega)}
\nc\caltd{\cal{T}_3(X,\,\Omega)}
\nc\caltm{\cal{T}_m(X,\,\Omega)}
\nc\calf{\cal{F}(X,\,\Omega)}
\nc\fram{\frak{M}(\Omega,\, X)}
\nc\shaw{\sha^{NC}_w(\Omega,\, X)}
\nc\dw{\diamond_w} \nc\dl{\diamond_\ell}
\nc\shal{\sha^{NC}_\ell(X,\, \Omega)} \nc\shav{\sha^{NC}_w(\Omega,\, V)} \nc\shat{\sha^{NC,1}_w(\Omega,\, T^{+}(V))}
\nc{\cfo}{\cal{F}(X,\,\Omega)}

\nc{\lar}{\varinjlim}
\nc\XO{(X,\,\Omega)}
\def\cxo#1#2;{\cal{#1}#2\XO}
\def\cxob#1#2;{\cal{#1}#2_b\XO}
\nc\lrf[2]{B_{#2}^+(#1)}
\nc{\fd}{\mathrm{\text{typed angularly decorated planar rooted trees}}}
\nc{\rb}{\mathrm{RBFWs}} \nc{\dfw}{\mathrm{DFW{(X)}}} \nc{\tfw}{\mathrm{TFW{(X)}}}
\nc{\tfv}{\mathrm{TFW{(V)}}} \nc{\rbf}{\mathrm{RBF}}
\nc{\db}{\mathrm{db}}
\nc{\st}{\mathrm{st}}

\def\ee#1;{\vee_{#1}}
\def\Ve#1,#2,#3;{\vee_{#1,\,(#2,\,#3)}}
\def\bigvv#1;#2;{\bigvee\nolimits_{#1}^{#2}}
\def\bigv#1;#2;#3;{\bigvee\nolimits_{#1}^{#2;\,#3}}

\nc{\Irr}{\mathrm{Irr}} \nc{\lc}{\lfloor} \nc{\rc}{\rfloor}
\nc{\rswx}{\frak{M}( \Omega_R\sqcup \Omega_S, X)}
\nc{\rswxs}{\frak{M}^\star( \Omega_R\sqcup \Omega_S, X)}
\nc{\Dl}{\leq_{_{{\rm Dl}}}} \nc{\Dll}{<_{_{{\rm Dl}}}} \nc{\bbs}{\mathbb{S}}
\nc{\orbsa}{$\Omega$-Rota-Baxter system\xspace}
\nc{\orbsas}{$\Omega$-Rota-Baxter systems\xspace}
\nc{\mrbs}{matching Rota-Baxter system\xspace}
\nc{\mrbss}{matching Rota-Baxter systems\xspace}

\nc\prbsla[4]{{R}_{#1}\left(#3\right)R_{#2}\left(#4\right)}
\nc\prbsra[4]{{R}_{#1\rightarrow#2}\left(R_{#1\rhd#2}\left(#3\right)#4\right)+{R}_{#1\leftarrow#2}\left(#3S_{#1\lhd#2}\left(#4\right)\right)}
\nc\prbslb[4]{{S}_{#1}\left(#3\right)S_{#2}\left(#4\right)}
\nc\prbsrb[4]{{S}_{#1\rightarrow#2}\left(R_{#1\rhd#2}\left(#3\right)#4\right)+{S}_{#1\leftarrow#2}\left(#3S_{#1\lhd#2}\left(#4\right)\right)}

\nc\rbsla[4]{\lc #3 \rc ^{R}_{#1} \lc #4 \rc ^R_{#2}}
\nc\rbslb[4]{\lc #3\rc ^{S}_{#1}  \lc #4 \rc ^S_{#2}}
\nc\rbsray[4]{\lc \lc #3 \rc^R_{#1\rhd#2} #4 \rc ^{R}_{#1\rightarrow#2}}
\nc\rbsraz[4]{\lc #3 \lc #4\rc^S_{#1\lhd#2}\rc ^{R}_{#1\leftarrow#2}}
\nc\rbsrby[4]{\lc \lc #3\rc ^R_{#1\rhd#2}#4\rc ^{S}_{#1\rightarrow#2}}
\nc\rbsrbz[4]{\lc #3 \lc #4 \rc ^S_{#1\lhd#2}\rc ^{S}_{#1\leftarrow#2}}
\nc\rbsrac[4]{\lc #3 \lc #4\rc^R_{#1\lhd#2}\rc ^{R}_{#1\leftarrow#2}}

\nc\rbslq[4]{\lc #3 \rc ^{Q}_{#1} \lc #4 \rc ^Q_{#2}}
\nc\rbslt[4]{\lc #3\rc ^{T}_{#1}  \lc #4 \rc ^T_{#2}}
\nc\rbsrqy[4]{\lc \lc #3 \rc^R_{#1\rhd#2} #4 \rc ^{Q}_{#1\rightarrow#2}}
\nc\rbsrqz[4]{\lc #3 \lc #4\rc^S_{#1\lhd#2}\rc ^{Q}_{#1\leftarrow#2}}
\nc\rbsrty[4]{\lc \lc #3\rc ^R_{#1\rhd#2}#4\rc ^{T}_{#1\rightarrow#2}}
\nc\rbsrtz[4]{\lc #3 \lc #4 \rc ^S_{#1\lhd#2}\rc ^{T}_{#1\leftarrow#2}}

\nc{\obr}[1]{\lc #1 \rc_\omega^R} \nc{\obs}[1]{\lc #1 \rc_\omega^S} \nc{\obq}[1]{\lc #1 \rc_\omega^*}
\nc{\obqa}[1]{\lc #1 \rc_\alpha^*} \nc{\obqb}[1]{\lc #1 \rc_\beta^*}
\nc{\obra}[1]{\lc #1 \rc_\alpha^R} \nc{\obrb}[1]{\lc #1 \rc_\beta^R}
\nc{\obsa}[1]{\lc #1 \rc_\alpha^S} \nc{\obsb}[1]{\lc #1 \rc_\beta^S}

\nc\nearro{\nearrow}
\nc\nwarro{\nwarrow}
\nc\searro{\searrow}
\nc\swarro{\swarrow}

\nc\nearrod[2]{#1\succ d(#2)}
\nc\nwarrod[2]{ #1\prec d(#2)}
\nc\searrod[2]{d(#1)\succ  #2}
\nc\swarrod[2]{d(#1)\prec #2}

\nc\nd{Novikov-dendriform\xspace}
\nc\ntd{Novikov-tridendriform\xspace}

\nc\vw{\diamondsuit}
\nc\ve{\vee}
\nc\wedg{\wedge}
\nc\ved[2]{d(#1) {\bullet} #2}
\nc\wedgd[2]{#1 {\bullet} d(#2)}

\begin{document}

\title[Differential ($q$-tri)dendriform algebras]{Weighted differential ($q$-tri)dendriform algebras}

\author{Yuanyuan Zhang}
\address{School of Mathematics and Statistics, Henan University, Henan, Kaifeng 475004, China}
\email{zhangyy17@henu.edu.cn}

\author{Huhu Zhang} \address{School of Mathematics and Statistics,
Lanzhou University, Lanzhou, 730000, P.\,R. China}
\email{zhanghh20@lzu.edu.cn}

\author{Tingzeng Wu}
\address{School of Mathematics and Statistics, Qinghai Nationalities University, Xining, Qinghai 810007,China;Qinghai Institute of Applied Mathematics, Xining, Qinghai 810007, P.R. China}
\email{mathtzwu@163.com}

\author{Xing Gao$^{*}$}\thanks{*Corresponding author}
\address{School of Mathematics and Statistics, Lanzhou University
Lanzhou, 730000, China;
School of Mathematics and Statistics
Qinghai Nationalities University, Xining, 810007, China;
Gansu Provincial Research Center for Basic Disciplines of Mathematics
and Statistics, Lanzhou, 730070, China
}
\email{gaoxing@lzu.edu.cn}

\date{\today}

\begin{abstract}
In this paper, we first introduce a weighted derivation on algebras over an operad $\cal P$, and prove that for the free $\cal P$-algebra, its weighted derivation is determined by the restriction on the generators.
As applications, we propose the concept of weighted differential ($q$-tri)dendriform algebras and study some basic properties of them.
Then Novikov-(tri)dendriform algebras are initiated, which can be induced from differential ($q$-tri) dendriform of weight zero. Finally, the corresponding free objects are constructed, in both the commutative and noncommutative contexts.
\end{abstract}

\makeatletter
\@namedef{subjclassname@2020}{\textup{2020} Mathematics Subject Classification}
\makeatother
\subjclass[2020]{
16W99, 
16S10, 
13P10, 
08B20 
}

\keywords{Derivation; differential ($q$-tri)dendriform algebra; Novikov algebra; Koszul dual operad}

\maketitle

\tableofcontents

\setcounter{section}{0}

\allowdisplaybreaks

\section{Introduction}
\subsection{(Weighted) differential algebras}
A differential algebra is an associative algebra equipped with a linear operator satisfying the Leibniz rule. The study of differential algebras is an algebraic approach to differential equations replacing analytic notions like differential quotients by abstract operations, initiated by Ritt~\cite{Ritt34, Ritt50} in 1930s and developed by Kolchin and his school~\cite{Kol73}.  After the fundamental work of Ritt and Kolchin, differential algebra has evolved into a vast area of mathematics that is important in both theory and applications, including differential Galois theory, differential algebraic geometry, differential algebraic groups~\cite{Kol73, Mag,SP03} and mechanic theorem proving~\cite{Wu84, Wu85}.
Guo and Keigher posed the concept of weighted differential algebra, including the usual differential algebra of weight zero and the difference algebra of weight one~\mcite{GK08} as examples. A more extensive notion of differential type algebras was studied in~\mcite{GSZ} in terms of the methods of Gr\"obner-Shirshov bases and rewriting systems.

Free objects are the most significant objects in a category.
It is well-known that the free differential algebra on a set is the polynomial algebra on the differential variables. This method can be generalized to the weighted case~\mcite{GK08}, just replacing the Leibniz rule by the weighted Leibniz rule. In general, for a multilinear variety Var of algebras (not necessary associative) with one binary operation, the free differential Var-algebra of weight zero generated by a set $X$ coincides with the free algebra Var$\langle \Delta X\rangle$ on the set $\Delta X$ of differential variables~\mcite{KSO}.

The more general notation of the free differential algebra on an algebra is more tricky. In this direction, Guo et al. recently showed that a Gr\"obner-Shirshov basis property of the base algebra can be extended to the free weighted differential algebra on this base algebra~\mcite{LG}. Zhou et al.~\mcite{QQWZ, QQZ} considered the free differential type algebra on an algebra. In more details, let $A$ be an algebra with a Gr\"obner-Shirshov basis for it and $B$ the free $\Phi$-algebra on $A$ with $\Phi$ being most of the differential type operator polynomial identities in~\cite{GSZ}. Zhou et al. provided a Gr\"obner-Shirshov basis and thereby a linear basis for $B$, in terms of some new monomial orders.

\subsection{($q$-tri)dendriform algebras}
Dendriform algebras were introduced by Loday~\cite{Lod93} in 1995 with motivation from algebraic $K$-theory. They have been studied quite extensively with connections to several areas in mathematics and physics, including operads ~\cite{Loday2}, homology~\cite{Fra2}, arithmetics~\cite{Loday3}, Hopf algebras~\cite{LoRo98,LoRo04,Ron02} and quantum field theory ~\cite{EFMP,Foissy}, in connection with the theory of renormalization of Connes and Kreimer~\cite{CK,CK1,CK2}.
Some years later after~\cite{Lod93}, Loday and Ronco introduced the concept of tridendriform algebra in the study of polytopes and Koszul duality~\cite{LoRo04}. The construction of free (tri)dendriform algebras can be referred to~\cite{Lod93,LoRo98,LoRo04}. Burgunder and Ronco~\cite{BR} proposed the concept of $q$-tridendriform algebras, which can simultaneously deal with tridendriform algebras~\cite{LoRo04} (when $q=1$) and $\mathcal{K}$-algebras~\cite{Chap} (when $q=0$). Recently, the study of a family version of (tri)dendriform algebras was given in~\cite{ZG, ZGM}, focusing on the constructions of free objects in both the commutative and noncommutative settings. Later, the relationship between free Rota-Baxter family algebras and free (tri)dendriform family algebras was characterized in~\cite{ZGM23}.

\subsection{The interaction of Rota-Baxter operator and differential operator}
A Novikov algebra is a special kind of a pre-Lie algebra, or left-symmetric algebra, arising in many contexts in mathematics and physics. The definition of a Novikov algebra appeared in~\cite{BN} devoted to the study of Poisson brackets of hydrodynamic type and the term Novikov algebra was proposed in~\cite{Os}.
Novikov algebras were studied by Zelmanov~\cite{Zelmanov} towards abstract algebraic structures, including classification and demonstrating that simple finite-dimensional Novikov algebras of characteristic zero are one-dimensional. A generic example of a Novikov algebra can be constructed from a commutative associative algebra $A$ with a derivation $d$ by setting $a \circ b := ad(b)$ on the same module $A$. Recently, noncommutative Novikov algebras were considered in~\cite{SK}, obtained in the same way from not necessarily commutative but associative algebras with a derivation.

It is known that a Rota-Baxter operator of weight zero on an associative algebra gives a dendriform
algebra~\cite{A04}. A generalized  process of a Rota-Baxter operator of weight zero on a $\mathcal{P}$-algebra giving a pre-$\mathcal{P}$-algebra has already been considered from the operadic point of view in~\cite{BGN}. As mentioned above, a derivation on a commutative associative algebra induces a Novikov algebra~\cite{BN}. Such a process is generalized so that a derivation on a $\mathcal{P}$-algebra induces a $Nov \,\tiny{\textcircled{}}\,\mathcal{P}$ algebra~\cite{KSO}. Here $Nov$ is the operad of Novikov algebras and $\tiny{\textcircled{}}$  stands for the Manin white product of operads~\cite{GK}.

A picture is worth a thousand words. Thanks to the results in Section~\ref{sect:2} and some well-known facts, algebras mentioned above and focused in the present paper can be gathered in the following diagram
\nc\asa{associative\xspace}
\nc\al{algebra\xspace}
\nc\dal{dendriform\xspace}
\[\xymatrix{
    \underset{\text{ \asa \al }}{A}&& \underset{\text{ \dal \al }}{(A, \prec,\succ)}\\
    &\underset{\text{ Novikov-\asa \al }}{(A, {\dashv},{\vdash})}&&\underset{\text{ Novikov-\dal \al }}{(A, \searrow, \nearrow, \swarrow, \nwarrow)}\\
    \underset{\text{ commutative \al }}{A}&&\underset{\text{ Zinb \al }}{(A, \prec)}\\
    &\underset{\text{ Novikov \al }}{(A, {\dashv})}&&\underset{\text{ pre-Novikov \al }}{(A, \triangleleft,\triangleright)}\\
    \ar@{.>}"1,1";"2,2"|-{\overset{\text{derivation }~d}{ a{\vdash} b:=d(a)b, a{\dashv} b:=ad(b) }}
    \ar@{.>}"1,3";"2,4"^{\overset{\text{derivation }~d}{ a\searrow b:=d(a)\succ b, a\nearrow b:=a\succ d(b), a\swarrow b:=d(a)\prec b, a\nwarrow b:=a\prec d(b) }}
    \ar@{.>}"3,1";"4,2"|-{\overset{\text{derivation }~d}{ a {\dashv} b:=ad(b)=d(b)a }}
    \ar@{.>}"3,3";"4,4"|-{\overset{\text{derivation }~d}{ a \lhd b:=a\prec d(b) , a \rhd b:=d(b)\prec a}}
    \ar"1,1";"1,3"^{\overset{\text{ Rota-Baxter operator }~P}{{a\succ b:=P(a)b, a\prec b:=aP(b) }}}
    \ar"2,2";"2,4"^{\overset{\text{ Rota-Baxter operator }~P}{{a\searrow b:=P(a){\vdash}b, a\nearrow b:=P(a){\dashv}b }}}_{a\swarrow b:=a{\vdash}p(b), a\nwarrow b:=a{\dashv}p(b)}
    \ar"4,2";"4,4"_{\overset{\text{ Rota-Baxter operator }~P}{{a\lhd b:=a {\dashv} P(b), a\rhd b:=P(a){\dashv}b }}}
    \ar@{->>}"1,1";"3,1"|-{\text{ commutative }}
    \ar@{->>}"2,2";"4,2"|-{\text{ commutative }}
    \ar@{->>}"2,4";"4,4"|-{\text{ commutative }}
    \ar"3,1";"3,3"|\hole^{\qquad\qquad\qquad\overset{\text{ Rota-Baxter operator }~P}{{ a\prec b:=aP(b)=P(b)a }}}
    \ar@{->>}"1,3";"3,3"|\hole
}\]
Here, $``\twoheadrightarrow$'' stands for ``commutative'', $``\longrightarrow$'' stands for ``adding a Rota-Baxter operator $P$ of weight zero'',
$``\dashrightarrow$'' stands for ``adding a derivation $d$ of weight zero''. If $``\longrightarrow$'' stands for ``adding a Rota-Baxter operator $P$ of weight $q$'', then right view of the cube above is replaced by
\[\xymatrix{
    \underset{\text{ Novikov-tridendriform \al }}{(A, \searrow, \nearrow, \swarrow, \nwarrow, \vee,\wedge)}
    &&&&&&&&\underset{\text{ q-tridendriform \al }}{(A, \prec,\succ, \bullet)}\\
    \underset{\text{ post-Novikov \al }}{(A, \triangleleft,\triangleright,\vw)}
    &&&&&&&&\underset{\text{ commutative q-tridendriform \al }}{(A, \prec, \bullet)}\\
    \ar@{.>}"1,9";"1,1"^{\text{derivation }~d}_{ a\searrow b:=d(a)\succ b, a\nearrow b:=a\succ d(b), a\swarrow b:=d(a)\prec b, a\nwarrow b:=a\prec d(b),
    a\vee b:=d(a)\bullet b, a\wedge b:=a\bullet d(b) }
    \ar@{.>}"2,9";"2,1"^{\text{derivation }~d}_{a \lhd b :=~ \nwarrod{a}{b}, a \rhd b:=~ \nwarrod{b}{a},a \vw b: = ~\wedgd{a}{b},}
    \ar@{->>}"1,1";"2,1"|-{\text{ commutative }}
    \ar@{->>}"1,9";"2,9"|-{\text{ commutative }}
}\]
If the initial point is an associative algebra $A$ with a Rota-Baxter operator $P$ of weight zero and a derivation $d$ of weight zero such that $dP=Pd$,
then the above cube commutes. The condition $dP=Pd$ is pivotal. See Proposition~\mref{prop:ddrb} for details.
If $dP\neq Pd$, then the top view and the bottom view of the above cube are not commutative.

\subsection{Main results and outline of the paper}
The relationship between dendriform algebras and derivations have been studied separately until the appearance of the paper~\cite{LLB22}. There the authors initiated the concept of a differential dendriform algebra, which is a
dendriform algebra equipped with a finite set of commutative derivations.

In the present paper, we focus on one derivation and
initiate the concept of a weighted derivation on an algebra over an operad. As applications, we generalize the notation of a differential dendriform algebra to the case of a weighted differential ($q$-tri)dendriform algebra, with some basic properties of them exposed.
Two new algebras, named Novikov-(tri)dendriform algebras, are induced from differential ($q$-tri)dendriform algebras of weight zero, respectively. Further, if the differential
($q$-tri)dendriform algebra of weight zero is commutative, then we obtain the pre(post)-Novikov algebra. Also we characterize the Koszul dual operad of the operad of differential ($q$-tri)dendriform algebras of weight
zero.
The corresponding free objects in both the commutative and noncommutative contexts are constructed, respectively. It is worth highlighting that our results are not under the framework of~\mcite{KSO}, as our derivation is of weight $\lambda$ and the ($q$-tri)dendriform algebra has more than one binary operation.

The paper is organized as follows. In Section~\ref{sec:freediffpalg}, we introduce the concept of a weighted derivation on an algebra over an operad $\cal P$.
For free $\cal P$-algebras, we prove that the weighted derivation is determined by its restriction on generators (Proposition~\ref{prop:wdiff}). Section~\ref{sect:2} is devoted to propose the concepts of weighted differential ($q$-tri)dendriform algebras and to give some basic properties (Propositions~\mref{prop:ddrb} and~\mref{pp:dualdiffdend}).
A (commutative) differential dendriform algebra of weight zero induces a (pre-Novikov) Novikov-dendriform algebra (Propositions~\mref{prop:dendnd} and~\mref{prop:cdd}).
Parallelly, a (commutative) differential $q$-tridendriform algebra of weight zero induces a (post-Novikov) Novikov-tridendriform algebra (Propositions~\mref{prop:tdendnd} and~\mref{prop:ctdendnd}).

In Section~\ref{sec:difftridend}, we construct the free weighted commutative differential $q$-tridendriform algebra by the weighted quasi-shuffle product (Theorem~\mref{thm:difftridend}). As an application, we obtain the free weighted commutative differential dendriform algebra via the shuffle-product (Theorem~\mref{coro:freecddend}). Section~\ref{sec:freedifftridend} is devoted to build the free differential $q$-tridendriform algebra of weight $\lambda$ on top of valently decorated Schr\"oder trees (Theorem~\mref{thm:freedifftridendriform}). We focus on the construction of the free differential dendriform algebra of weight $\lambda$ in terms of planar binary trees (Theorem~\mref{thm:freediffdendriform}).

\smallskip
{\bf Notation.}
Throughout this paper, we fix a commutative unitary ring $\bfk$,
which will be the base ring of all modules, algebras, tensor products, as well as linear maps. By an algebra we mean a unitary associative noncommutative algebra, unless the contrary is specified. Denote by $S_n$ the symmetric group of degree $n$ with $n\geq 1$ and by $\mathbb{N}$ the set of nonnegative integers.

\section{Weighted derivations on operadic algebras}
\mlabel{sec:freediffpalg}
In this section, we first recall some basic concepts of operads~\cite{LV} and define a weighted derivation on a $\cal  P$-algebra over an operad $\cal P$. Then we prove that the weighted derivation on the free $\cal P$-algebra is determined by its restriction on the generators.

\begin{defn}\cite[Section 5.2.1]{LV}\label{defn:operad}
An {\bf operad}  is  a collection $\cal P=\{\cal P(n)\}_{n\geq0}$ of right $S_n$-modules endowed with two collections of morphisms of $S_n$-modules
$$\gamma=\{\gamma_n\}_{n\geq0}: \cal P\circ \cal P\to \cal P\,\text{ and }\,
\eta=\{\eta_n\}_{n\geq0}:  I\to \cal P,$$
where $\cal P\circ \cal P=\bigoplus_{n\geq0} \cal P (n)\otimes_{S_n} \cal P^{\otimes n}$ and $I=\{I(0), I(1), I(2),\cdots\}=\{0,\bfk, 0,\cdots\},$ such that the following diagrams commute
$$
\xymatrix{
& \cal P\circ(\cal P\circ \cal P) \ar[r]^-{\Id\circ\gamma} & \cal P\circ  \cal P \ar[dd]^{\gamma} &\\
(\cal P\circ \cal P)\circ  \cal P \ar[ur]^= \ar[d]_{\gamma\circ \Id} & & &
I\circ\cal P \ar[dr]_= \ar[r]^-{\eta\circ\Id} & \cal P\circ \cal P \ar[d]^{\gamma} &\ar[l]_-{\Id\circ\eta}\cal P\circ I \ar[dl]^= \\
\cal P\circ  \cal P \ar[rr]^{\gamma}         &         &  \cal P &  & \cal P.&
}
$$
\end{defn}
In this paper, we consider only reduced operads, i.e. such that $\cal P(0)=\{0\}$.

\begin{defn}\cite[Section 5.2.3]{LV}
Let $\cal P$ be an operad. A {\bf $\cal P$-algebra} is a module $A$ equipped with a linear map $\gamma_A:\cal{P}(A)\ra A$ such that the following diagrams commute:
$$
\xymatrix{
& \cal P(\cal P(A)) \ar[r]^-{\cal P(\gamma_A)} & \cal P(A) \ar[dd]^{\gamma_A} \\
(\cal P\circ \cal P)(A) \ar[ur]^= \ar[d]_{\gamma(A)} & & & I(A)\ar[dr]_= \ar[r]^-{\eta(A)} & \cal P(A) \ar[d]^{\gamma(A)}\\
\cal P(A) \ar[rr]^{\gamma_A}         &         & A &  & A.
}
$$
Here $\cal P(A)=\sum_{n\geq 0} \cal P(n)\otimes_{S_n} A^{\otimes n}.$
\end{defn}

For any module $V$, $\cal{P}(V)$ is a $\cal P$-algebra by defining
$\gamma(V):=\gamma_{\cal{P}(V)}: \cal P(\cal P(V))\ra \cal{P}(V).$

\begin{prop}\cite[Proposition 5.2.6]{LV}
The $\cal P$-algebra $(\cal{P}(V), \gamma(V))$, equipped with the embedding $j_V:V\hookrightarrow \cal{P}(V)$, is the free $\cal  P$-algebra on a module $V$.
In more details, for any $\cal P$-algebra $A$ and any linear map $f:V\ra A$, there is a unique $\cal P$-algebra morphism $\lbar{f}$ such that $f = \lbar{f}j_V$.
\end{prop}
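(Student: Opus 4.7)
The plan is to construct $\lbar{f}$ explicitly as the composition $\lbar{f}:=\gamma_A\circ \cal P(f)$, where $\cal P(f):\cal P(V)\to \cal P(A)$ is the module morphism obtained by functoriality of the Schur functor $\cal P$ (on arity $n$, sending $\mu\otimes_{S_n}(v_1\ot\cdots\ot v_n)$ to $\mu\otimes_{S_n}(f(v_1)\ot\cdots\ot f(v_n))$), and $\gamma_A:\cal P(A)\to A$ is the given $\cal P$-algebra structure on $A$. All three required properties — compatibility with $j_V$, compatibility with the $\cal P$-algebra structure, and uniqueness — then reduce to diagram chases built from the operad axioms of Definition~\ref{defn:operad}.

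For compatibility with the embedding, I would unwind $j_V$ as the composition of the unit $\eta(V):I(V)\to \cal P(V)$ with the canonical identification $V\cong I(V)$. Naturality of $\eta$ gives $\cal P(f)\circ\eta(V) = \eta(A)\circ f$, while the unit axiom for the $\cal P$-algebra $A$ asserts $\gamma_A\circ \eta(A)=\id_A$ after identifying $I(A)$ with $A$. Chaining these yields $\lbar{f}\circ j_V = f$. To verify that $\lbar{f}$ is a $\cal P$-algebra morphism, I would check that the square
$$
\xymatrix{
\cal P(\cal P(V))\ar[r]^-{\cal P(\lbar{f})}\ar[d]_{\gamma(V)} & \cal P(A)\ar[d]^{\gamma_A}\\
\cal P(V)\ar[r]^-{\lbar{f}} & A
}
$$
commutes. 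Substituting $\cal P(\lbar{f}) = \cal P(\gamma_A)\circ \cal P(\cal P(f))$ and $\lbar{f} = \gamma_A\circ \cal P(f)$, the square factors into a piece which is precisely the associativity axiom of $A$ as a $\cal P$-algebra, and a piece which is naturality of the operadic composition $\gamma:\cal P\circ\cal P\to \cal P$ with respect to $f$; both commute by assumption.

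For uniqueness, suppose $g:\cal P(V)\to A$ is any $\cal P$-algebra morphism with $g\circ j_V = f$. Any element of $\cal P(V)$ is a $\bfk$-linear combination of elements of the form $\mu\otimes_{S_n}(v_1\ot\cdots\ot v_n)$ with $\mu\in \cal P(n)$ and $v_i\in V$, and such an element equals $\gamma(V)\bigl(\mu\otimes(j_V(v_1)\ot\cdots\ot j_V(v_n))\bigr)$. The morphism property of $g$ combined with $g\circ j_V = f$ then forces
$$g\bigl(\mu\otimes_{S_n}(v_1\ot\cdots\ot v_n)\bigr) = \gamma_A\bigl(\mu\otimes(f(v_1)\ot\cdots\ot f(v_n))\bigr) = \lbar{f}\bigl(\mu\otimes_{S_n}(v_1\ot\cdots\ot v_n)\bigr),$$
so $g=\lbar{f}$. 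I do not anticipate a deep obstacle: the whole proof is bookkeeping with the Schur functor and the canonical isomorphism $I(A)\cong A$. The only point requiring genuine care is ensuring that every formula descends through the $S_n$-coinvariants defining $\cal P(-)$, which is guaranteed by the $S_n$-equivariance built into $\gamma$ and $\eta$.
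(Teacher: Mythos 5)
Your construction $\lbar{f}=\gamma_A\circ\cal P(f)$, with compatibility checked via naturality of $\eta$ and $\gamma$ plus the unit and associativity axioms of $A$, and uniqueness forced by the fact that $\cal P(V)$ is generated by $j_V(V)$ under $\gamma(V)$, is exactly the standard argument of \cite[Proposition 5.2.6]{LV}, which is the proof the paper relies on by citation. The proposal is correct and takes essentially the same approach.
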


In a heuristic way, let us recall that~\cite{GK08} a derivation $d$ of weight $\lambda$ on an associative algebra $A$ is a linear operator $d:A\rightarrow A$ such that
$$d(ab)=d(a)b+ ad(b)+\lambda d(a)d(b), \quad a,b\in A.$$
This sheds light on the following general concept.

\begin{defn}
Let $\cal P$ be an operad, $A$ be a $\cal P$-algebra and $\lambda\in \bfk$.
A linear map $d:A\ra A$ is called a {\bf derivation} of weight $\lambda$ on $A$ if
it makes the following diagram commutative
$$
\xymatrix{
\cal P( A)  = \cal P\circ A \ar[r]^-{\gamma_A}\ar[d]_{\id_{\cal P}\hat{\circ} d} & A \ar[d]^{d}\\
\cal P( A) = \cal P\circ A\ar[r]^-{\gamma_A} & A.
}
$$
Here $\id_{\cal P}\hat{\circ} d :=\{(\id_{\cal P}\hat{\circ} d)_n\}_{n\geq 0}$ and
$$(\id_{\cal P}\hat{\circ} d)_n :=\sum_{k=1}^n\lambda^{k-1}\bigg(\sum_{1\leq i_1<\cdots<i_k\leq n}
\id_{\cal P}\otimes \Big(\id_A\otimes\cdots\otimes\id_ A\otimes \underbrace{d}_{i_1\text{-th position}}\otimes \id_A\otimes \cdots \otimes \id_A\otimes \underbrace{d}_{i_k\text{-th position}}\otimes\id_A\otimes\cdots \otimes \id_A \Big)\bigg).$$
\mlabel{defn:dpo}
\end{defn}

Denote by $\rm{Diff}_{\lambda}(A)$ the set of derivations of weight $\lambda$ on a $\cal P$-algebra $A$.
The following result is a generalization of~\cite[Proposition~6.3.6]{LV}.

\begin{prop}
\mlabel{prop:wdiff}
Let $(\cal P, \gamma, \eta)$ be an operad, $V$ a module and $\lambda\in \bfk$. Then any derivation $d$ of weight $\lambda$ on the free $\cal P$-algebra $\cal P(V)$ is completely characterized by its restriction on the generators $V\ra \cal P(V):$
\[
\rm{Diff}_\lambda(\cal P(V))\cong \rm{Hom}(V, \cal P(V)).
\]
In more details, given a linear map $\phi: V\ra \cal{P}(V)$, the unique derivation $d_\phi: \cal{P}(V) \rightarrow \cal{P}(V)$ of weight $\lambda$  extending $\phi$ is determined by
\[
d_\phi:=(\gamma \circ \id_V) (\id_{\cal P}\hat{\circ} \phi): \,
{\cal P} \circ V \rightarrow {\cal P} \circ {\cal P}(V) = {\cal P}\circ {\cal P}\circ V \rightarrow {\cal P}\circ V.
\]
that is, for any $f\in \cal P(n)$ and any $v_1,\cdots,v_n\in V$ with $n\geq 0$,
\begin{eqnarray*}\label{eq:inductiondiff}
d_\phi(f(v_1,\cdots,v_n)):=
\sum_{k=1}^{n} \lambda^{k-1}\bigg(\sum_{1\leq i_1<\cdots< i_k \leq n}
f\Big(v_1,\cdots, \phi(v_{i_1}),\cdots,\phi(v_{i_2}),\cdots,\phi(v_{i_k}),\cdots,v_n \Big)\bigg).
\end{eqnarray*}
\end{prop}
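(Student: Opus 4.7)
The plan is to establish the claimed bijection $\mathrm{Diff}_\lambda(\cal P(V))\cong \mathrm{Hom}(V,\cal P(V))$ in two steps: uniqueness (restriction to $V$ is injective) and existence (the explicit formula defines a weight-$\lambda$ derivation extending any given $\phi$).

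For uniqueness, I would use the free $\cal P$-algebra presentation. Any element of $\cal P(V)$ is of the form $f(v_1,\ldots,v_n) = \gamma_{\cal P(V)}(f\otimes v_1\otimes\cdots\otimes v_n)$ via the canonical inclusion $V=\cal P(1)\otimes V\hookrightarrow\cal P(V)$ (using $\eta$). If $d$ is a weight-$\lambda$ derivation, applying the commutative diagram of Definition \ref{defn:dpo} on this presentation gives
\[
d(f(v_1,\ldots,v_n)) = \gamma_{\cal P(V)}\bigl((\id_{\cal P}\hat\circ d)(f\otimes v_1\otimes\cdots\otimes v_n)\bigr),
\]
and expanding the operator $(\id_{\cal P}\hat\circ d)_n$ yields precisely the formula of $d_\phi$ with $\phi:=d|_V$. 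Hence any weight-$\lambda$ derivation is completely determined by its restriction to $V$, and two derivations agreeing on $V$ coincide.

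For existence, I would first define $d_\phi$ on $\cal P(n)\otimes V^{\otimes n}$ by the stated formula and check well-definedness on $\cal P(n)\otimes_{S_n} V^{\otimes n}$, which is a bookkeeping check: permuting $(v_1,\ldots,v_n)$ by $\sigma$ and replacing $f$ by $f\cdot\sigma^{-1}$ induces the same permutation on the index set over which we sum the $\phi$-insertions, so the formula respects the $S_n$-coinvariant quotient. Linearity in each slot is manifest.

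The core task is to verify that $d_\phi$ satisfies the derivation diagram, i.e.\ $d_\phi\circ\gamma_{\cal P(V)} = \gamma_{\cal P(V)}\circ(\id_{\cal P}\hat\circ d_\phi)$ on $\cal P(\cal P(V))$. Taking $f\otimes x_1\otimes\cdots\otimes x_m$ with $x_j=g_j(v_{j,1},\ldots,v_{j,n_j})$, associativity of the operadic composition $\gamma$ gives $\gamma_{\cal P(V)}(f\otimes x_1\otimes\cdots\otimes x_m) = h(w_1,\ldots,w_N)$, where $h=f\circ(g_1,\ldots,g_m)$, $N=n_1+\cdots+n_m$, and $w_1,\ldots,w_N$ is the concatenation of the $v_{j,i}$. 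Applying the formula for $d_\phi$ to this produces
\[
\sum_{k=1}^{N}\lambda^{k-1}\sum_{\substack{I\subseteq\{1,\ldots,N\}\\|I|=k}} h(\ldots,\phi(w_i),\ldots)_{i\in I}.
\]
On the other side, expanding $(\id_{\cal P}\hat\circ d_\phi)$ by choosing a subset $J=\{j_1<\cdots<j_{k'}\}\subseteq\{1,\ldots,m\}$ (weight $\lambda^{k'-1}$), then expanding each $d_\phi(x_{j_l})$ via a nonempty subset $I_l\subseteq\{1,\ldots,n_{j_l}\}$ (weight $\lambda^{|I_l|-1}$), and applying $\gamma_{\cal P(V)}$, produces a sum over the same data. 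The reconciliation rests on the bijection: subsets $I\subseteq\{1,\ldots,N\}$ correspond exactly to pairs $(J,(I_l)_l)$ where $J$ records which blocks $j$ contain at least one element of $I$ and each $I_l$ is the intra-block trace; and on the weight identity
\[
\lambda^{k'-1}\prod_{l=1}^{k'}\lambda^{|I_l|-1} = \lambda^{(k'-1)+(|I|-k')} = \lambda^{|I|-1}.
\]
Matching terms via operadic associativity (i.e.\ $f\circ(g_1,\ldots,g'_{j_l},\ldots)=h$ with $\phi$-insertions in the appropriate slots) then gives the desired equality.

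The main obstacle I anticipate is the combinatorial/indexing match in this last step: one has to keep careful track of how the $\phi(v_{j,i})\in\cal P(V)$ interact with the operadic composition (so that a $k'$-fold choice of outer positions plus nested $|I_l|$-fold inner choices really reassemble into a single choice of $I\subseteq\{1,\ldots,N\}$), together with the telescoping of the $\lambda$-exponents. Once this identity is spelled out, the derivation property and hence the isomorphism $\mathrm{Diff}_\lambda(\cal P(V))\cong\mathrm{Hom}(V,\cal P(V))$ follow, with $d_\phi$ being the unique derivation extending $\phi$.
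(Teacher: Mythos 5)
Your proposal is correct, and its first half (uniqueness) is exactly the paper's argument: apply the commutative diagram defining a weight-$\lambda$ derivation to $(f,v_1,\ldots,v_n)$, expand $(\id_{\cal P}\hat{\circ}\, d)_n$, and replace each $d(v_{i_j})$ by $\phi(v_{i_j})$ to see that $d$ is forced to equal the stated formula. Where you go beyond the paper is the existence half: the paper stops after the uniqueness computation (``which is fixed'') and never verifies that the formula actually descends to the coinvariants $\cal P(n)\ot_{S_n}V^{\ot n}$ or that it satisfies the derivation diagram on $\cal P(\cal P(V))$. Your outline of that verification is sound: the key points are precisely the bijection between nonempty subsets $I\subseteq\{1,\ldots,N\}$ and pairs consisting of the set $J$ of blocks met by $I$ together with the nonempty intra-block traces $I_l$, the telescoping identity $\lambda^{|J|-1}\prod_l\lambda^{|I_l|-1}=\lambda^{|I|-1}$, and operadic associativity to identify $\gamma(f;\ldots,g_{j}(\ldots,\phi(v),\ldots),\ldots)$ with $h(\ldots,\phi(v),\ldots)$; multilinearity reduces the check to such spanning elements. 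So your route is not different from the paper's so much as strictly more complete, and the extra step you flag as the main obstacle is the one the paper leaves implicit.
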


\begin{proof}
For any $f\in \cal P(n)$ and any $v_1,\cdots,v_n\in V$, we have
\begin{eqnarray*}
&&d_\phi(f(v_1,\cdots,v_n))\\
&=&(\gamma\circ \id_V)(\id_{\cal P}\hat{\circ} d_\phi)_n(f, v_1,\cdots,v_n)\\
&=&\sum_{k=1}^n\lambda^{k-1}(\gamma \circ \id_V)\bigg(\sum_{1\leq i_1<\cdots<i_k\leq n}
\id_{\cal P} \otimes\Big(\id_{\cal P(V)}\otimes\cdots\otimes\id_{\cal P(V)}\otimes \underbrace{d_\phi}_{i_1\text{-th position}}\otimes \id_{\cal P(V)}\otimes \cdots\\
&&\otimes \id_{\cal P(V)}\otimes \underbrace{d_\phi}_{i_k\text{-th position}}\otimes\id_{\cal P(V)}\otimes\cdots \otimes \id_{\cal P(V)} \Big)\bigg)(f, v_1,\cdots,v_n)\\
&=& \sum_{k=1}^{n} \lambda^{k-1}\bigg(\sum_{1\leq i_1<\cdots< i_k \leq n}f(v_1,\cdots, d_\phi(v_{i_1}),\cdots,d_\phi(v_{i_2}),\cdots,d_\phi(v_{i_k}),\cdots,v_n)\bigg)\\
&=&\sum_{k=1}^{n} \lambda^{k-1}\bigg(\sum_{1\leq i_1<\cdots< i_k \leq n}f(v_1,\cdots, \phi(v_{i_1}),\cdots,\phi(v_{i_2}),\cdots,\phi(v_{i_k}),\cdots,v_n)\bigg),
\end{eqnarray*}
which is fixed. This completes the proof.
\end{proof}

\section{Weighted differential ($q$-tri)dendriform algebras and their Koszul dual operads} \label{sect:2}
In this section, as applications of the last section, we propose the concepts of
weighted differential ($q$-tri)dendriform algebras, and give some basic properties of them.
In particular, for the differential ($q$-tri)dendriform algebras of weight zero,
we obtain some non-associative algebras induced from them and compute their Koszul dual operads.

\subsection{Weighted differential ($q$-tri)dendriform algebras}
The concept of a dendriform algebra was introduced by Loday~\cite{Lod93} in 1995 with motivation from algebraic $K$-theory.

\begin{defn}\label{defn:dend}
A {\bf dendriform algebra} (previously also called a dendriform dialgebra) is a \bfk-module $D$ with two binary operations $\prec$ and $\succ$  on $D$ such that
 \begin{align*}
 (a \prec b) \prec c=&\ a \prec(b \prec c+b \succ c),\\
(a \succ b) \prec c=&\ a \succ(b \prec c), \\
 (a \prec b+a \succ b) \succ c=&\ a \succ(b \succ c),\quad \forall a, b, c \in D.
\end{align*}
A dendriform algebra $(D, \prec, \succ)$ is called {\bf commutative} if
$
a \succ b=b \prec a$ for $a, b\in D$.

\end{defn}

The concept of $q$-tridendriform algebras was initiated by Burgunder and Ronco~\cite{BR}.

\begin{defn}\cite[Definition 1.1]{BR}
Let $q\in \bfk$. A {\bf $q$-tridendriform algebra} is a \bfk-module $T$ together with three binary operations $\prec$, $\succ$ and $\bullet$ such that
\begin{align*}
\label{tridend1}
(a \prec b) \prec c=& \ a \prec( b \star_q c),\\
 (a \succ b) \prec c=& \ a \succ(b \prec c),\\
 (a\star_q b) \succ c=& \ a \succ(b \succ c),\\
(a \succ b) \bullet c=& \ a \succ(b \bullet c),\\
(a \prec b) \bullet c=& \ a \bullet(b \succ c),\\
 (a \bullet b) \prec c=& \ a \bullet(b \prec c),\\
(a \bullet b) \bullet c=& \ a \bullet(b \bullet c),\quad \forall a, b, c \in T,
\end{align*}
where $\star_q:= \prec +\succ + q \bullet $.
A $q$-tridendriform algebra $(T, \prec, \succ, \bullet)$ is called {\bf commutative} if
$a \succ b=b \prec a$ and $a \bullet b=b \bullet a$ for $a, b \in T.$
\mlabel{defn:qtrid}
\end{defn}

We collect the following facts.
\begin{remark}\cite{BR}\mlabel{re:qtrid}
\begin{enumerate}
\item The operation $\star_q=\, \prec+ \succ + q\bullet$ is associative.

\item A $q$-tridendriform algebra $(T,\prec,\succ,\bullet)$ induces a dendriform algebra $(T,\prec,\succ_q)$ by $\succ_q:= \succ + q\bullet$.

\item If $q=1$, a $q$-tridendriform algebra reduces to a tridendriform algebra given in~\cite{LoRo04}.

\item If $q=0$, a $q$-tridendriform algebra reduces to a (ungraded) $\mathcal{K}$-algebra described in~\cite{Chap}.

\item \mlabel{it:rb2qtri}
A Rota-Baxter algebra $(A, \cdot, P)$ of weight $q$ induces a $q$-tridendriform algebra $(A, \prec, \succ, \cdot)$ by
\[a \prec_P b:=a \cdot P(b)\,\text{ and }\, a \succ_P b:=P(a) \cdot b,\quad\forall a, b \in A.\]
\end{enumerate}
\end{remark}
Quite recently, the concept of a differential dendriform algebra was proposed in~\cite[Definition 4.22]{LLB22}.
Motivated by this, we introduce the weighted version with one derivation,
also as an example of Definition~\mref{defn:dpo}.

\begin{defn}
Let $\lambda \in \bfk$ and $(D,\prec,\succ)$ be a dendriform algebra.
A linear map $d:D\rightarrow D$ is called a {\bf derivation of weight $\lambda$} on $D$ if
\begin{align}
 d(a\prec b)=&\ d(a)\prec b+a\prec d(b)+\lambda d(a)\prec d(b),\mlabel{eq:diffdend2} \\
 d(a\succ b)=&\ d(a)\succ b+a\succ d(b)+\lambda d(a)\succ d(b),\quad \forall a, b \in D.  \mlabel{eq:diffdend3}
\end{align}
In this case, we call the quadruple $(D,\prec,\succ,d)$ a {\bf differential dendriform algebra of weight $\lambda$}.
Further, it is called {\bf commutative} if the dendriform algebra $(D,\prec,\succ)$ is commutative.
\end{defn}

Analogously, we propose the following concept as another example of Definition~\mref{defn:dpo}.

\begin{defn}~\label{defn:diffqtri}
Let $\lambda, q \in \bfk$ and $(T,\prec, \succ, \bullet)$ be a $q$-tridendriform algebra. A linear map $d:T\rightarrow T$ is called a {\bf derivation of weight $\lambda$} on $T$ if
\begin{align}
 d(a\prec b)=& \ d(a)\prec b+a\prec d(b)+\lambda d(a)\prec d(b),\mlabel{eq:difftri2}\\
 d(a\succ b)=& \ d(a)\succ b+a\succ d(b)+\lambda d(a)\succ d(b),  \mlabel{eq:difftri4}\\
 d(a\bullet b)=& \ d(a)\bullet b+a\bullet d(b)+\lambda d(a)\bullet d(b),\quad \forall a, b \in T. \mlabel{eq:difftri3}
\end{align}
In this case, we call the quintuple $(T,\prec,\succ, \bullet, d)$ {\bf a differential $q$-tridendriform algebra of weight $\lambda$}.
Moreover, it is named {\bf commutative} if the $q$-tridendriform algebra $(T,\prec,\succ, \bullet)$ is commutative.
\end{defn}

\begin{remark}
For a commutative differential dendriform algebra $(D,\prec,\succ, d)$  of weight $\lambda$, Eq.~(\mref{eq:diffdend2}) is equivalent to
Eq.~(\mref{eq:diffdend3}). For a commutative differential $q$-tridendriform algebra $(T,\prec,\succ, \bullet, d)$  of weight $\lambda$, Eq.~(\mref{eq:difftri2}) is exactly Eq.~(\mref{eq:difftri4}), and the triple $(T, \bullet, d)$ is a differential algebra of weight $\lambda$.
\end{remark}

As usual, we have the following fact.

\begin{prop}\label{prop:diffdend}
Let $\lambda, q\in\bfk$. Let $(A, \cdot, d)$ be a differential algebra of weight $\lambda$ and $P$ be a Rota-Baxter operator of weight $q$ on $(A, \cdot)$ such that $d P=P d$.
\begin{enumerate}
\item If $q=0$, then the quadruple $(A, \prec_P, \succ_P, d)$ is a differential dendriform algebra of weight $\lambda$, where
$$a\prec_P b := a P(b),\quad a\succ_P b := P(a) b, \quad\forall a, b\in A.$$
\mlabel{item:drbd}
\item If $q\neq0$, then the quintuple $(A, \prec_P, \succ_P, \cdot, d)$ is a differential $q$-tridendriform algebra of weight $\lambda$, where
$$a\prec_P b := a P(b),\quad a\succ_P b := P(a) b, \quad \forall a, b\in A.$$ \mlabel{item:drbt}

\item If $q\neq0$, then the quadruple $(A, \prec_P, \succ_P, d)$ is a differential dendriform algebra of weight $\lambda$, where
$$a\prec_P b := a P(b)+ qab, \quad a\succ_P b := P(a) b, \quad\forall a, b\in A$$
or
$$a\prec_P b := a P(b), \quad a\succ_P b := P(a) b+ qab, \quad\forall a, b\in A.$$
\mlabel{item:drbdq}
\end{enumerate}
\mlabel{prop:ddrb}
\end{prop}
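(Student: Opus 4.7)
The plan is to verify each claim by a direct computation resting on two ingredients: the weighted Leibniz rule for $d$ on the associative product $\cdot$, and the compatibility $dP=Pd$. The (tri)dendriform axioms for $(A,\prec_P,\succ_P)$ and $(A,\prec_P,\succ_P,\cdot)$ are already standard consequences of the Rota--Baxter axioms (cf.\ Remark~\ref{re:qtrid}(e) and its weight-zero analogue), so for each of (a), (b), (c) only the derivation identities \eqref{eq:diffdend2}--\eqref{eq:diffdend3} or \eqref{eq:difftri2}--\eqref{eq:difftri3} need to be checked.

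For part (a), I would compute
\[
d(a\prec_P b)=d(aP(b))=d(a)P(b)+a\,d(P(b))+\lambda\,d(a)\,d(P(b))
\]
by the weighted Leibniz rule on $(A,\cdot)$, and then invoke $dP=Pd$ to replace $d(P(b))$ by $P(d(b))$; the right-hand side becomes $d(a)\prec_P b+a\prec_P d(b)+\lambda\,d(a)\prec_P d(b)$, as required. The corresponding identity for $\succ_P$ is entirely symmetric, with $dP=Pd$ now used to rewrite $d(P(a))=P(d(a))$. Part (b) proceeds in exactly the same way: \eqref{eq:difftri2} and \eqref{eq:difftri4} are identical computations to those in (a), and \eqref{eq:difftri3} for $\bullet=\cdot$ is nothing other than the hypothesis that $d$ is a weighted derivation on $(A,\cdot)$.

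For part (c) with $a\prec_P b:=aP(b)+qab$, I would expand
\begin{align*}
d(a\prec_P b)
&= d(aP(b))+q\,d(ab)\\
&= \bigl(d(a)P(b)+q\,d(a)b\bigr)+\bigl(aP(d(b))+q\,a\,d(b)\bigr)+\lambda\bigl(d(a)P(d(b))+q\,d(a)d(b)\bigr)\\
&= d(a)\prec_P b+a\prec_P d(b)+\lambda\,d(a)\prec_P d(b),
\end{align*}
using the Leibniz rule twice, $dP=Pd$ once, and then regrouping the six terms according to the definition of $\prec_P$. The verification for $\succ_P$ is immediate, and the second variant (in which the term $q\,ab$ is absorbed into $\succ_P$ rather than $\prec_P$) is handled symmetrically. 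There is no serious obstacle in the argument; its only delicate point is bookkeeping. The role of $dP=Pd$ is sharply visible, however, since without it the middle term $a\,d(P(b))$ produced by the Leibniz expansion cannot be converted into $a\prec_P d(b)$, and the whole scheme collapses.
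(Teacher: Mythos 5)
Your proposal is correct and follows essentially the same route as the paper: the (tri)dendriform axioms are quoted as known consequences of the Rota--Baxter structure, and the weighted derivation identities are verified by applying the weighted Leibniz rule on $(A,\cdot)$ and then using $dP=Pd$ to move $d$ past $P$. In fact you spell out the part (c) computation with the extra $qab$ term explicitly, which the paper omits (it only writes out part (b) and asserts the other items are analogous), and your regrouping of the six terms there is accurate.
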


\begin{proof}
We just verify Item~\mref{item:drbt}. The Items~\mref{item:drbd} and~\mref{item:drbdq} can be proved in {the} same way.
By Remark~\mref{re:qtrid}~\mref{it:rb2qtri}, the quadruple $(A, \prec_P, \succ_P, \cdot)$ is a $q$-tridendriform algebra. {For any $a,b\in A$,} Eqs.~\eqref{eq:difftri2} and~\eqref{eq:difftri4} follow from
{\allowdisplaybreaks
\begin{align*}
d(a\prec_P b)=&\ d(aP(b))\\
=&\ d(a) P(b)+a d(P(b))+\lambda d(a)d(P(b))\\
=&\ d(a)P(b)+aP (d(b))+\lambda d(a)P( d(b))\\
=&\ d(a)\prec_P b+a\prec_P d(b)+\lambda d(a)\prec_P d(b),\\
d(a\succ_P b)=&\ d(P(a)b)\\
=&\ d(P(a))b+P(a) d(b)+\lambda d(P(a))d(b)\\
=&\ P(d(a))b+P(a)d(b)+\lambda P(d(a)) d(b)\\
=&\ d(a)\succ_P b+a\succ_P d(b)+\lambda d(a)\succ_P d(b),
%
\end{align*}}
as required.
\end{proof}

\subsection{ Novikov-(tri)dendriform algebras and pre(post)-Novikov algebras}
We are going to induce some non-associative algebras from differential ($q$-tri)dendriform algebras of weight zero. Let us begin with the concept of Novikov algebras.

\begin{defn}~\cite{Os}
A {\bf (left) Novikov algebra} is a \bfk-module $N$ with a binary operation $\circ$ on $N$ such that
\begin{eqnarray*}
(a \circ b)\circ c&=&(a \circ c)\circ b,\\
(a\circ b)\circ c-a\circ (b\circ c)&=&(b\circ a)\circ c-b\circ (a\circ c),\quad\forall a,b,c\in N.
\end{eqnarray*}
\end{defn}

\begin{defn}
A {\bf Novikov-associative algebra} is a \bfk-module $A$ together with two binary operations $\vdash$ and $\dashv$ on $A$ such that
\begin{eqnarray}
(a \vdash b)\dashv c&=&a\vdash (b\dashv c),\label{eq:na1}\\
(a\dashv b)\dashv c-a\dashv (b\vdash c)&=&a\vdash (b\vdash c)-(a \dashv b)\vdash c,\quad \forall a,b,c\in A\label{eq:na2}.
\end{eqnarray}
A { Novikov-associative algebra} $(A, \vdash,\dashv)$ is called {\bf commutative } if
$a \dashv b= b\vdash a$ for $a,b\in A .$
\end{defn}

\begin{remark}
\begin{enumerate}
\item Denote by $\tiny{\textcircled{}}$ the Manin white product. A differential associative algebra of weight zero induces a
$Nov\,\tiny{\textcircled{}}\, Ass$-algebra~\cite[Theorem 7]{KSO}, where
$Nov$ (resp. $Ass$) is the operad of Novikov algebras (resp. associative algebras).
By~\cite[Lemma 2.3]{Lod10}, the $Nov\,\tiny{\textcircled{}}\, Ass$-algebra satisfies Eqs.~\eqref{eq:na1} and~\eqref{eq:na2}. So we adopt the name Novikov-associative algebra in this paper.
Such kind name will be used in Definitions~\mref{defn:nda} and~\mref{defn:nta}.
The Novikov-associative algebra is also named noncommutative Novikov algebras in~\mcite{SK}.

\item If the triple $(A, \vdash,\dashv)$  is a {commutative Novikov-associative algebra}, then
$(A, \dashv)$ is a Novikov algebra.  Conversely, if $(A, \circ)$ is a Novikov algebra, then $(A, \vdash,\dashv)$  is a {commutative Novikov-associative algebra}, where $a\dashv b= b\vdash a:=a\circ b.$
\end{enumerate}
\end{remark}

\begin{defn}\label{defn:nda}
A {{\bf \nd algebra}} is a \bfk-module $N$ together with four binary operations $\searrow$, $\nearrow$, $\swarrow$ and $\nwarrow$ on $N$ such that, for any $a,b,c\in N$,
\begin{eqnarray}
(a \swarro b)\nwarro c&=&a\swarro (b\nearro c+b \nwarro c), \label{eq:nd1}\\
(a \searro b)\nwarro c&=&a\searro (b\nwarro c),\\
(a\searro b+a\swarro b)\nearro c&=&a\searro (b\nearro c),\label{eq:nd3}\\
(a\nwarro b)\nwarro c-a\nwarro (b\searro c+b\swarro c)&=&a\swarro (b\searro c+b\swarro c)-(a \nwarro b)\swarro c,\label{eq:nd4}\\
(a\nearro b)\nwarro c-a\nearro (b\swarro c)&=&a\searro (b\swarro c)-(a \nearro b)\swarro c,\\
(a\nearro b+a\nwarro b)\nearro c-a\nearro (b\searro c)&=&a\searro (b\searro c)-(a\nearro b+a\nwarro b)\searro c.\label{eq:nd6}
\end{eqnarray}
A { \nd algebra} $(N, \searrow, \nearrow, \swarrow,\nwarrow)$ is called {\bf commutative } if
$$a \nwarro b= b\searro a \,\text{ and }\, a\nearro b =b\swarro a, \quad\forall a,b\in N .$$
\end{defn}

\begin{prop} \label{prop:dendnd}
A differential dendriform algebra $(D,\prec,\succ, d)$ of weight zero
induces a \nd algebra $(D, \searrow, \nearrow, \swarrow,\nwarrow)$,
where
\begin{eqnarray*}
&&a \searro b :=\searrod{a}{ b} , \quad a\nearro b :=\nearrod{a}{b}, \\
&&a \swarro b:= \swarrod{a}{b},\quad  a\nwarro b:= \nwarrod{a}{b}, \quad\forall a,b\in {D}.
\end{eqnarray*}
Moreover, if define
\begin{align*}
a\vdash b :=&\ a\searro b+a\swarro b=d(a)\succ b+ d(a)\prec b, \\
a\dashv b :=&\ a\nearro b+ a\nwarro b= a\succ d(b)+ a\prec d(b),\quad{\forall a,b,c\in D,}
\end{align*}
then the triple $(D, \vdash,\dashv)$ is a Novikov-associative algebra.
\end{prop}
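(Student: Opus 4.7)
The plan is to verify the six defining identities of a Novikov-dendriform algebra (Eqs.~\eqref{eq:nd1}--\eqref{eq:nd6}) and then the two identities for a Novikov-associative algebra (Eqs.~\eqref{eq:na1}--\eqref{eq:na2}).

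I observe that the six dendriform identities split naturally into two groups. For Eqs.~\eqref{eq:nd1}--\eqref{eq:nd3}, substituting the definitions produces expressions in which $d$ is applied only to one of $a$, $b$, or $c$ at the outset, and never to an internal product. Each such identity reduces directly to one of the three dendriform axioms of Definition~\ref{defn:dend}, simply by treating $d(a)$ (respectively $d(b)$, $d(c)$) as the relevant argument. For instance, \eqref{eq:nd1} becomes $(d(a)\prec b)\prec c = d(a)\prec(b\prec c + b\succ c)$ after unwinding, which is precisely the first dendriform axiom evaluated at $(d(a), b, c)$.

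For Eqs.~\eqref{eq:nd4}--\eqref{eq:nd6}, the left-hand compositions produce $d$ applied to an internal product (e.g.\ $(a\nwarrow b)\swarrow c = d(a\prec d(b))\prec c$), so I will invoke the derivation rules \eqref{eq:diffdend2}--\eqref{eq:diffdend3} with $\lambda=0$ to distribute $d$, then apply the dendriform axioms to collapse the result. The recipe for each identity is: expand both sides using the four definitions; apply the weight-zero Leibniz rule wherever $d$ meets a product; and then use the dendriform axioms to group the surviving terms. The key cancellations involve $d^{2}$-terms that appear on both sides; the fact that $d$ has weight zero (so no corrective $\lambda\, d(x)\prec d(y)$ appears) is what makes these cancellations line up perfectly.

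For the Novikov-associative statement, the cleanest route is to pass to the associative product $\star := \prec + \succ$, whose associativity follows from summing the three dendriform axioms. Then $a\vdash b = d(a)\star b$ and $a\dashv b = a\star d(b)$, and the derivation rules for $\prec$ and $\succ$ combine into a weight-zero Leibniz rule $d(a\star b) = d(a)\star b + a\star d(b)$. With this reformulation, Eq.~\eqref{eq:na1} is immediate from associativity of $\star$, and Eq.~\eqref{eq:na2} reduces to a straightforward cancellation in which the cross-terms $d(a)\star d(b)\star c$ and $a\star d(b)\star d(c)$ vanish, leaving $-\,a\star d^{2}(b)\star c$ on both sides. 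The main obstacle is the bookkeeping in \eqref{eq:nd4}--\eqref{eq:nd6}: one must carefully track which dendriform axiom to apply to which cluster of terms after the Leibniz expansion, since each identity produces several families of terms. The computation is mechanical but requires care.
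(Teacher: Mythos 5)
Your proposal is correct, and for the six Novikov-dendriform identities it follows essentially the same route as the paper: \eqref{eq:nd1}--\eqref{eq:nd3} reduce directly to the three dendriform axioms with $d$ already sitting on the outer arguments, while \eqref{eq:nd4}--\eqref{eq:nd6} require the weight-zero Leibniz rules \eqref{eq:diffdend2}--\eqref{eq:diffdend3} to distribute $d$ over the inner products, after which the dendriform axioms and the cancellation of the $d^2$-terms finish the job --- exactly the computation carried out in the paper. (One small slip in your illustrative example: \eqref{eq:nd1} unwinds to $(d(a)\prec b)\prec d(c)=d(a)\prec\bigl(b\prec d(c)+b\succ d(c)\bigr)$, i.e.\ the first dendriform axiom at $(d(a),b,d(c))$, not at $(d(a),b,c)$; this does not affect the argument.)

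Where you genuinely diverge is the Novikov-associative part. The paper obtains \eqref{eq:na1} and \eqref{eq:na2} by summing \eqref{eq:nd1}--\eqref{eq:nd3} and \eqref{eq:nd4}--\eqref{eq:nd6} respectively, so that part of its proof is free once the six identities are established. You instead pass to the associative product $\star:=\,\prec+\succ$, note that $a\vdash b=d(a)\star b$ and $a\dashv b=a\star d(b)$, and that the two Leibniz rules sum to $d(a\star b)=d(a)\star b+a\star d(b)$; then \eqref{eq:na1} is associativity of $\star$ and \eqref{eq:na2} follows from the cancellation leaving $-a\star d^2(b)\star c$ on both sides. I checked this computation and it is correct. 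Your route has the advantage of being independent of the six-identity verification (it is the classical ``derivation on an associative algebra gives a Novikov-type structure'' argument applied to $(D,\star)$), at the cost of a separate, if short, calculation; the paper's summation argument is shorter given what has already been proved, but obscures the fact that the Novikov-associative structure only sees the associative product $\star$.
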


\begin{proof}
{For any $a,b,c\in D,$} the Eqs.~\eqref{eq:nd1}-\eqref{eq:nd6} follow from
\begin{eqnarray*}
(a \swarro b)\nwarro c&=&\nwarrod{(\swarrod{a}{b})}{ c}\\
&=&\swarrod{a}{ (\nearrod{b}{ c}+\nwarrod{b}{ c})}\\
&=&a\swarro (b\nearro c+b \nwarro c),\\
(a \searro b)\nwarro c&=&\nwarrod{(\searrod{a}{  b})}{ c}\\
&=&\searrod{a}{ (\nwarrod{b}{ c})}\\
&=&a\searro (b\nwarro c),\\
(a\searro b+a\swarro b)\nearro c&=&\nearrod{(\searrod{a}{ b}+\swarrod{a}{ b})}{ c}\\
&=&\searrod{a}{ (\nearrod{b}{c})}\\
&=&a\searro (b\nearro c),
\end{eqnarray*}
and
\begin{eqnarray*}
&&(a\nwarro b)\nwarro c-a\nwarro (b\searro c+b\swarro c)\\
&=&\nwarrod{(\nwarrod{a}{ b})}{ c}-\nwarrod{a}{ (\searrod{b}{ c}+\swarrod{b}{ c})}\\
&=&\nwarrod{(\nwarrod{a}{ b})}{ c}-a\prec\big(d(d(b))\succ c+d(b)\succ d(c)+d(d(b))\prec c+d(b)\prec d(c)\big)\\
&=&{-a\prec\big(d(d(b))\succ c+d(d(b))\prec c\big)}\\
&=&-\big(a\prec d(d(b))\big)\prec c\\
&=&\swarrod{a}{ (\searrod{b}{ c}+\swarrod{b}{ c})}-\big(d(a)\prec d(b)+a\prec d(d(b))\big)\prec c\\
&=&\swarrod{a}{ (\searrod{b}{ c}+\swarrod{b}{ c})}-\swarrod{\nwarrod{a}{  b}}{ c}\\
&=&a\swarro (b\searro c+b\swarro c)-(a \nwarro b)\swarro c,\\
&&(a\nearro b)\nwarro c-a\nearro (b\swarro c)\\
&=&\nwarrod{(\nearrod{a}{ b})}{ c}-\nearrod{a}{ \swarrod{b}{c}}\\
&=&\nwarrod{(\nearrod{a}{ b})}{ c}-a\succ\big(d(d(b))\prec c+d(b)\prec d(c)\big)\\
&=&-a\succ\big(d(d(b))\prec c\big)\\
&=&-\big(a\succ d(d(b))\big)\prec c\\
&=&\searrod{a}{ (\swarrod{b}{ c})}-\big(d(a)\succ d(b)+a\succ d(d(b))\big)\prec c\\
&=&\searrod{a}{ (\swarrod{b}{ c})}-\swarrod{\nearrod{a}{  b}}{ c}\\
&=&a\searro (b\swarro c)-(a \nearro b)\swarro c,\\
&&(a\nearro b+a\nwarro b)\nearro c-a\nearro (b\searro c)\\
&=&\nearrod{(\nearrod{a}{ b}+\nwarrod{a}{ b})}{ c}-\nearrod{a}{ \searrod{b}{ c}}\\
&=&-a\succ(d(d(b))\succ c)\\
&=&-(a\succ d(d(b))+a\prec d(d(b)))\succ c\\
&=&\searrod{a}{ (\searrod{b}{ c})}-(\nearrod{a}{ b}+\searrod{\nwarrod{a}{ b})}{ c}\\
&=&a\searro (b\searro c)-(a\nearro b+a\nwarro b)\searro c.
\end{eqnarray*}
Further more, Eq.~\eqref{eq:na1} (resp. Eq.~\eqref{eq:na2}) is a direct result of the summations of Eqs.~\eqref{eq:nd1}-\eqref{eq:nd3} (resp. Eqs.~\eqref{eq:nd4}-\eqref{eq:nd6}).
\end{proof}

The following is the commutative version of Definition~\mref{defn:nda}.

\begin{defn}~\cite{HBG}\label{defn:prenov}
A {\bf pre-Novikov algebra} is a \bfk-module $N$ together with two binary operations $\lhd$ and $\rhd$ on $N$ such that
\begin{eqnarray}
(a\lhd b)\lhd c-a\lhd (b\rhd c+b\lhd c)&=&(b\rhd a)\lhd c-b\rhd (a\lhd  c), \label{eq:pn1}\\
(a\rhd b+a\lhd b)\rhd c-a\rhd (b\rhd c)&=&(b\rhd a+b\lhd a)\rhd c-b\rhd(a\rhd c),\label{eq:pn2}\\
(a\lhd b)\lhd c&=&(a\lhd c)\lhd b,\label{eq:pn3}\\
(a\rhd b+a\lhd b)\rhd c&=&(a\rhd c)\lhd b,\quad{\forall a, b,c\in N.}\label{eq:pn4}
\end{eqnarray}
\end{defn}
\begin{remark}\label{re:cnd}
If the quintuple $(N, \searrow, \nearrow, \swarrow,\nwarrow)$  is a {commutative \nd algebra}, then the triple $(N, \nwarrow,\nearrow)$ is a pre-Novikov algebra.  Conversely, if the triple $(N, \lhd,\rhd)$ is a pre-Novikov algebra, then the quintuple $(N, \searrow, \nearrow, \swarrow,\nwarrow)$  is a {commutative \nd algebra},
where
$$a\nwarro b= b\searro a:=a\lhd b\,\text{ and }\,a\nearro b =b\swarro a:= a\rhd b,\,{\forall a,b\in N.}$$
\end{remark}

As the commutative case of Proposition~\mref{prop:dendnd}, we have
\begin{prop}\label{prop:cdd}
A commutative differential dendriform algebra $(D,\prec,\succ, d)$ of weight zero
induces a pre-Novikov algebra $(D, \lhd, \rhd)$, where
\begin{eqnarray*}
a \lhd b := a \prec d(b)=d(b)\succ a,\quad a\rhd b :=d(b)\prec a=a\succ d(b), \quad\forall a,b\in {D}.
\end{eqnarray*}
Define
\begin{eqnarray*}
a\circ b:=a \lhd b+a\rhd b := a \prec d(b)+d(b)\prec a, \quad\forall a,b\in {D}.
\end{eqnarray*}
Then the pair $(D, \circ)$ is a Novikov algebra.
\end{prop}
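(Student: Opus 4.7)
The plan is to bootstrap from the non-commutative results already established in this section rather than verify the four pre-Novikov axioms from scratch. By Proposition~\mref{prop:dendnd}, the differential dendriform algebra $(D,\prec,\succ,d)$ of weight zero induces the Novikov-dendriform structure $(D,\searrow,\nearrow,\swarrow,\nwarrow)$ given by $a\searrow b=d(a)\succ b$, $a\nearrow b=a\succ d(b)$, $a\swarrow b=d(a)\prec b$, $a\nwarrow b=a\prec d(b)$, together with the Novikov-associative structure $(D,\vdash,\dashv)$ where $a\vdash b=d(a)\succ b+d(a)\prec b$ and $a\dashv b=a\succ d(b)+a\prec d(b)$.

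First I would show that the single commutativity relation $x\succ y=y\prec x$ is exactly enough to force the induced Novikov-dendriform algebra to be commutative in the sense of Definition~\mref{defn:nda}: indeed, $a\nwarrow b=a\prec d(b)=d(b)\succ a=b\searrow a$ and $a\nearrow b=a\succ d(b)=d(b)\prec a=b\swarrow a$. Applying Remark~\mref{re:cnd}, the pair $(D,\nwarrow,\nearrow)$ then inherits a pre-Novikov structure, and the identifications of that remark read $a\lhd b=a\nwarrow b=a\prec d(b)=d(b)\succ a$ and $a\rhd b=a\nearrow b=a\succ d(b)=d(b)\prec a$, matching the operations in the statement.

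For the Novikov algebra assertion, I would compare $\circ$ against the Novikov-associative operations from the same proposition. One reads off $a\dashv b=a\nearrow b+a\nwarrow b=a\rhd b+a\lhd b=a\circ b$, while commutativity yields $a\vdash b=d(a)\succ b+d(a)\prec b=b\prec d(a)+b\succ d(a)=b\dashv a$. Hence $(D,\vdash,\dashv)$ is a commutative Novikov-associative algebra, and the remark after the definition of a Novikov-associative algebra identifies this with a Novikov algebra on $(D,\dashv)=(D,\circ)$.

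The entire proof is thus a bookkeeping reduction to results already in place. The only point needing care is matching the orientation conventions between the Novikov-dendriform operations of Proposition~\mref{prop:dendnd} and the pre-Novikov operations of Remark~\mref{re:cnd}; but once the substitution $x\succ y=y\prec x$ is applied consistently, every axiom follows automatically from its non-commutative counterpart, so no new identity needs to be verified.
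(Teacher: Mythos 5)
Your proposal is correct, but it takes a genuinely different route from the paper. The paper proves Proposition~\mref{prop:cdd} by a direct computation: it verifies the pre-Novikov axiom~\eqref{eq:pn1} from the commutative dendriform axioms and the weight-zero Leibniz rule (the key step being the cancellation of the $a\prec\big(d(d(c))\prec b+b\prec d(d(c))\big)$ terms against $\big(a\prec d(d(c))\big)\prec b$), and declares Eqs.~\eqref{eq:pn2}--\eqref{eq:pn4} and the Novikov claim to be similar. You instead perform no new computation at all: you observe that $x\succ y=y\prec x$ makes the Novikov-dendriform algebra of Proposition~\mref{prop:dendnd} commutative in the sense of Definition~\mref{defn:nda} (since $a\nwarrow b=a\prec d(b)=d(b)\succ a=b\searrow a$ and $a\nearrow b=a\succ d(b)=d(b)\prec a=b\swarrow a$), then invoke Remark~\mref{re:cnd} for the pre-Novikov structure and the commutative Novikov-associative discussion for the Novikov structure on $\circ=\dashv$; I checked that the operation identifications line up exactly with the statement, and that both cited remarks are in fact correct. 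What your route buys is a clean logical reduction to the already-proven noncommutative case and an explicit treatment of the Novikov claim, which the paper's proof leaves implicit. What it costs is that Remark~\mref{re:cnd} and the remark following the definition of Novikov-associative algebras are stated in the paper without proof, so the computational burden is relocated into those remarks rather than eliminated; a fully self-contained write-up along your lines would still need to record the (routine but nontrivial) derivation of Eqs.~\eqref{eq:pn1}--\eqref{eq:pn4} from Eqs.~\eqref{eq:nd1}--\eqref{eq:nd6} under the commutativity substitutions.
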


\begin{proof}
We just check Eq.~\eqref{eq:pn1}, as the cases of Eqs.~\eqref{eq:pn2}-\eqref{eq:pn4} are similar. For any $a,b\in D$, we have
\begin{eqnarray*}
&&(a\lhd b)\lhd c-a\lhd (b\rhd c+b\lhd c)\\
&=&(a\prec d(b))\prec d(c)-a\prec d\big(d(c)\prec b+b\prec d(c)\big)\\
&=&(a\prec d(b))\prec d(c)-a\prec \big(d(d(c))\prec b+d(c)\prec d(b)+d(b)\prec d(c)+b\prec d(d(c))\big)\\
&=&-a\prec \big(d(d(c))\prec b+b\prec d(d(c))\big)\\
&=&-\big(a\prec d(d(c))\big) \prec b\\
&=&(d(a)\prec b)\prec d(c)-\big(a\prec d(d(c))+d(a)\prec d(c)\big) \prec b\\
&=&(d(a)\prec b)\prec d(c)-d\big(a\prec d(c)\big) \prec b\\
&=&(b\rhd a)\lhd c-b\rhd (a\lhd  c),
\end{eqnarray*}
as needed.
\end{proof}

The following is the tridendriform version of Definition~\mref{defn:nda}.
\begin{defn}\label{defn:nta}
A {\bf \ntd algebra} is a \bfk-module $T$ together with six binary operations $\searrow, \nearrow, \swarrow,\nwarrow, \vee$ and $\wedge$  on $T$
such that
\begin{eqnarray*}
(a \swarro b)\nwarro c&=&a\swarro (b\nearro c+b \nwarro c+b\wedg c), \label{eq:tnd1}\\
(a \searro b)\nwarro c&=&a\searro (b\nwarro c),\\
(a\searro b+a\swarro b+a \ve b)\nearro c&=&a\searro (b\nearro c),\label{eq:tnd3}\\
(a\nwarro b)\nwarro c-a\nwarro (b\searro c+b\swarro c+b\ve c)&=&a\swarro (b\searro c+b\swarro c+b\ve c)-(a \nwarro b)\swarro c,\label{eq:tnd4}\\
(a\nearro b)\nwarro c-a\nearro (b\swarro c)&=&a\searro (b\swarro c)-(a \nearro b)\swarro c,\\
(a\nearro b+a\nwarro b+a\wedg b)\nearro c-a\nearro (b\searro c)&=&a\searro (b\searro c)-(a\nearro b+a\nwarro b+a\wedg b)\searro c,\label{eq:tnd6}\\
(a\ve b)\nwarro c&=&a\ve (b\nwarro c),\\
(a\wedg b)\nwarro c-a\wedg (b\swarro c)&=&a\ve (b\swarro c)-(a\wedg b)\swarro c,\\
(a\swarro b)\wedg c&=&a\ve (b\nearro c),\\
(a\nwarro b)\wedg c-a\wedg (b\searro c)&=&a\ve (b\searro c)-(a\nwarro b)\ve c,\\
(a\searro b)\wedg c&=&a\searro (b\wedg c),\\
(a\nearro b)\wedg c-a\nearro (b\ve c)&=&a\searro (b\ve c)-(a\nearro b)\ve c,\\
(a \ve b)\wedg c&=&a\ve (b\wedg c),\\
(a\wedg b)\wedg c-a\wedg (b\ve c)&=&a\ve (b\ve c)-(a \wedg b)\ve c,\quad{\forall a,b,c\in T.}
\end{eqnarray*}
\end{defn}

\begin{prop} \label{prop:tdendnd}
A differential $q$-tridendriform algebra $(T,\prec,\succ, {\bullet}, d)$ of weight zero
induces a \ntd algebra $(T, \searrow, \nearrow, \swarrow,\nwarrow, \vee, \wedge)$,
where
\begin{align*}
a \searro b :=&~\searrod{a}{ b} , \quad a\nearro b :=\nearrod{a}{b}, \\
a \swarro b:=&~ \swarrod{a}{b}, \quad a\nwarro b:= \nwarrod{a}{b}, \\
a \ve b: =& ~ \ved{a}{b}, \quad a\wedge b:=\wedgd{a}{b},  \quad\forall a,b\in {T}.
\end{align*}
Moreover, if define
\begin{align*}
a\vdash b :=&\ a\searro b+a\swarro b+ a\ve b=d(a)\succ b+ d(a)\prec b+d(a){\bullet} b, \\
a\dashv b :=&\ a\nearro b+ a\nwarro b+a\wedg b= a\succ d(b)+ a\prec d(b)+a{\bullet} d(b),\quad\forall a,b\in T,
\end{align*}
then the triple $(T, \vdash,\dashv)$ is a Novikov-associative algerba.
\end{prop}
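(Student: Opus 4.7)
The plan is to mirror the strategy of Proposition~\ref{prop:dendnd}. For each of the fourteen defining identities of a \ntd algebra (Definition~\ref{defn:nta}), I will expand both sides using the definitions of $\searro, \nearro, \swarro, \nwarro, \ve, \wedg$ in terms of $\prec, \succ, \bullet$ and $d$, then apply the relevant $q$-tridendriform axiom from Definition~\ref{defn:qtrid} and, where $d^2$ appears, the weight-zero Leibniz rule $d(a\star b) = d(a)\star b + a\star d(b)$ for each $\star\in\{\prec, \succ, \bullet\}$.

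The first six identities, which involve only $\searro, \nearro, \swarro, \nwarro$, are essentially word-for-word the calculations in Proposition~\ref{prop:dendnd}. The one novelty is that the first and third $q$-tridendriform axioms carry an extra $q\,b\bullet c$-term through $\star_q = \prec + \succ + q\bullet$, but this is exactly compensated by the extra $b\wedg c$-terms appearing on the right-hand sides of (\ref{eq:tnd1}) and (\ref{eq:tnd3}), so the telescoping through $d(d(b))$ still closes. The next four identities (axioms 7, 9, 11, 13 of Definition~\ref{defn:nta}) contain no subtraction, and each reduces to a single $q$-tridendriform axiom after one application of $d$; for example, $(a\ve b)\nwarro c = a\ve(b\nwarro c)$ becomes $(d(a)\bullet b)\prec d(c) = d(a)\bullet(b\prec d(c))$, which is a direct instance of $(x\bullet y)\prec z = x\bullet(y\prec z)$. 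The remaining four identities (axioms 8, 10, 12, 14) use $d^2$ and follow the cancellation template of (\ref{eq:nd4})--(\ref{eq:nd6}): expanding $d^2$ via the Leibniz rule, the mixed $d(a)\star d(b)$-type terms appear with opposite signs on the two sides and cancel, and a mixed $q$-tridendriform axiom applied to the surviving $d(d(\cdot))$-term finishes the verification.

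For the Novikov-associative claim on $(T, \vdash, \dashv)$, the key observation is that $a\vdash x = a\searro x + a\swarro x + a\ve x$ and $x\dashv c = x\nearro c + x\nwarro c + x\wedg c$. The seven identities in Definition~\ref{defn:nta} whose left-hand sides together cover the nine products $(a\star_1 b)\,\star_2\, c$ for $\star_1 \in \{\searro, \swarro, \ve\}$ and $\star_2 \in \{\nearro, \nwarro, \wedg\}$ sum to $(a\vdash b)\dashv c = a\vdash(b\dashv c)$, which is exactly Eq.~(\ref{eq:na1}). The remaining seven identities analogously combine and rearrange to give Eq.~(\ref{eq:na2}). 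The main obstacle throughout is bookkeeping: there are many $d(d(\cdot))$-terms and mixed cross terms to balance, particularly in the axioms that mix $\prec, \succ, \bullet$ on the left with $\ve, \wedg$ on the right, but each individual cancellation is routine given the Leibniz rule.
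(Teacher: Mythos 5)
Your overall strategy---direct verification of all fourteen identities of Definition~\ref{defn:nta} followed by summation to obtain the Novikov-associative axioms---is exactly the paper's: the paper writes out only the first and last identities and declares the rest similar, and your grouping of the seven ``no-subtraction'' identities into Eq.~\eqref{eq:na1} and the seven remaining ones into Eq.~\eqref{eq:na2} is the correct bookkeeping (the tridendriform analogue of the last line of the proof of Proposition~\ref{prop:dendnd}). The identities that rest only on the axioms $(x\succ y)\prec z=x\succ(y\prec z)$, $(x\succ y)\bullet z=x\succ(y\bullet z)$, $(x\prec y)\bullet z=x\bullet(y\succ z)$, $(x\bullet y)\prec z=x\bullet(y\prec z)$ and $(x\bullet y)\bullet z=x\bullet(y\bullet z)$ go through exactly as you describe.

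However, your claim that the extra $q\,b\bullet c$-term produced by $\star_q$ is ``exactly compensated'' by the $b\wedg c$-terms is a genuine gap: the compensation is exact only at $q=1$. Concretely, for the first identity of Definition~\ref{defn:nta} the axiom $(x\prec y)\prec z=x\prec(y\star_q z)$ gives
\[
(a\swarro b)\nwarro c=(d(a)\prec b)\prec d(c)=d(a)\prec\bigl(b\prec d(c)+b\succ d(c)+q\,b\bullet d(c)\bigr),
\]
whereas the target $a\swarro(b\nearro c+b\nwarro c+b\wedg c)$ is $d(a)\prec\bigl(b\prec d(c)+b\succ d(c)+b\bullet d(c)\bigr)$; the difference $(q-1)\,d(a)\prec(b\bullet d(c))$ does not vanish in general. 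The same $(q-1)$-discrepancy survives in the third, fourth and sixth identities, i.e.\ precisely those that invoke $(x\prec y)\prec z=x\prec(y\star_q z)$ or $(x\star_q y)\succ z=x\succ(y\succ z)$; in the fourth identity, for instance, the leftover is $(q-1)\bigl[a\prec(d(b)\bullet d(c))+d(a)\prec(d(b)\bullet c)+a\prec(d(d(b))\bullet c)\bigr]$. To be fair, the paper's own displayed check of the first identity silently replaces $\star_q$ by $\star_1$, so this is as much a defect of the statement (Definition~\ref{defn:nta} would need a coefficient $q$ on the $\wedg$ and $\ve$ summands in those four identities, or the proposition should be restricted to $q=1$) as of your argument; but as written, ``the telescoping still closes'' is an unsubstantiated and, for $q\neq1$, false assertion that any complete write-up must confront rather than assert.
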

\begin{proof}
The proof is similar to the one of Proposition~\mref{prop:dendnd}, and we just check the first and last equations in Definition~\mref{defn:nta}. Let $a,b,c\in T$. Then
\begin{eqnarray*}
(a \swarro b)\nwarro c
&=&\nwarrod{(\swarrod{a}{  b})}{ c}\\
&=&\swarrod{a}{ (\nearrod{b}{ c}+\nwarrod{b}{  c}+\wedgd{b}{ c})}\\
&=&a\swarro (b\nearro c+b \nwarro c+b\wedg c),\\
(a\wedg b)\wedg c-a\wedg (b\ve c)
&=&\wedgd{(\wedgd{a}{ b})}{ c}-\wedgd{a}{ \ved{b}{ c}}\\
&=&\wedgd{(\wedgd{a}{ b})}{ c}-a{\bullet} \big(d(d(b)){\bullet} c+d(b){\bullet}d(c)\big)\\
&=&-a{\bullet} \big(d(d(b)){\bullet} c\big)\\
&=&-\big(a {\bullet} d(d(b))\big){\bullet} c\\
&=&\ved{a}{ (\ved{b}{ c})}-\big(d(a){\bullet} d(b)+a {\bullet} d(d(b))\big){\bullet} c\\
&=&\ved{a}{ (\ved{b}{ c})}-\ved{\wedgd{a}{  b}}{ c}\\
&=&a\ve (b\ve c)-(a \wedg b)\ve c.
\end{eqnarray*}
This completes the proof.
\end{proof}

The Manin black product of a binary quadratic operad $\cal P$ and the operad of post-Lie algebras is the post-$\cal P$ operad~\cite{BGN}. This provides a mechanism for the following concept by taking $\cal P$ to be the operad of Novikov algebras.

\begin{defn}\label{defn:pn}
 A {\bf post-Novikov algebra} is a \bfk-module $T$ with three binary operations $\lhd, \rhd$ and $ \vw$ on $T$ such that
 \begin{eqnarray*}
(a \rhd b)\lhd c&=& (a\rhd c+a \lhd c+a\vw c)\rhd b, \label{eq:tnd1}\\
(a \lhd b)\lhd c&=& (a\lhd c) \lhd b,\\
(a\lhd b)\lhd c-a\lhd (b\lhd c +b\rhd c+ b\vw c)&=&(b \rhd a) \lhd c-b \rhd(a \lhd c),\label{eq:tnd4}\\
(a\rhd b)\lhd c-a\rhd (c\rhd b)&=& (c\rhd b)\lhd a-c\rhd (a \rhd b),\\
(a\vw b)\lhd c&=&(a\lhd c)\vw b,\\
(a\vw b)\lhd c-a\vw (b\rhd c)&=&(b\rhd a)\vw c-b\rhd (a\vw c),\\
(a\rhd b)\vw c&=& (a\rhd c)\vw b,\\
(a\lhd b)\vw c-a\vw (b\lhd c)&=& (b\lhd a)\vw c-b\vw (a\lhd c),\\
(a \vw b)\vw c&=&(a\vw c)\vw b,\\
(a\vw b)\vw c-a\vw (b\vw c )&=& (b\vw a )\vw c-b\vw (a \vw c),\quad{\forall a,b,c\in T.}
\end{eqnarray*}
\end{defn}

\begin{remark}\label{re:pn}
If the 7-tuple $(T, \searrow, \nearrow, \swarrow,\nwarrow, \vee, \wedge)$ is a {commutative \ntd algebra}, then the quadruple
$(T, \nwarrow, \nearrow,\wedge)$ is a post-Novikov algebra.
Conversely, if the quadruple $(T, \lhd, \rhd, \vw)$ is a post-Novikov algebra, then the pair $(T, \vw)$ is a Novikov algebra and the 7-tuple $(T, \searrow, \nearrow, \swarrow,\nwarrow, \vee, \wedge)$ is a {commutative \ntd algebra}, where
$$a\nwarro b=b\searro a:= a\lhd b ,~ a\nearro b=b\swarro a:=a\rhd b  \,\text{ and }\, ~a\wedg b= b  \ve a:=a\vw b,\quad{\forall a,b\in T.} $$
\end{remark}

\begin{prop} \label{prop:ctdendnd}
A commutative differential $q$-tridendriform algebra $(T,\prec,\succ, {\bullet}, d)$ of weight zero
induces a post-Novikov algebra $(T, \lhd, \rhd, \vw)$,
where
\begin{align*}
a \lhd b :=&~ \nwarrod{a}{b}=\searrod{b}{ a} , \\
a \rhd b:=&~ \nearrod{a}{b}=\swarrod{b}{a}, \\
a \vw b: =& ~\wedgd{a}{b}= \ved{b}{a},  \quad\forall a,b\in{T}.
\end{align*}
Define
\begin{align*}
a\circ b:=a \lhd b+a \rhd b +a \vw b=\nwarrod{a}{b}+ \nearrod{a}{b}+\wedgd{a}{b},  \quad \forall a,b\in {T}.
\end{align*}
Then the pair $(T, \circ)$ is a Novikov algebra.
\end{prop}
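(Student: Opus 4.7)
The plan is to deduce this result from Proposition~\mref{prop:tdendnd} together with Remark~\mref{re:pn}, by upgrading the non-commutative construction to the commutative setting. First, I would apply Proposition~\mref{prop:tdendnd} to the differential $q$-tridendriform algebra $(T,\prec,\succ,\bullet,d)$, which yields a Novikov-tridendriform structure $(T,\searrow,\nearrow,\swarrow,\nwarrow,\vee,\wedge)$ together with an induced Novikov-associative algebra $(T,\vdash,\dashv)$, where
\[
a\vdash b=d(a)\succ b+d(a)\prec b+d(a)\bullet b,\qquad a\dashv b=a\succ d(b)+a\prec d(b)+a\bullet d(b).
\]

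Next, I would verify that the induced Novikov-tridendriform structure is commutative in the sense prescribed by Remark~\mref{re:pn}. This reduces to three short identities that are immediate from the commutativity $a\succ b=b\prec a$ and $a\bullet b=b\bullet a$ of the original $q$-tridendriform algebra: $a\nwarrow b=a\prec d(b)=d(b)\succ a=b\searrow a$, $a\nearrow b=a\succ d(b)=d(b)\prec a=b\swarrow a$, and $a\wedge b=a\bullet d(b)=d(b)\bullet a=b\vee a$. Once these are in place, Remark~\mref{re:pn} directly delivers the post-Novikov algebra $(T,\lhd,\rhd,\vw)$ with the operations as defined in the statement.

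For the Novikov algebra $(T,\circ)$, I would observe that by construction $a\circ b=a\lhd b+a\rhd b+a\vw b=a\dashv b$, so $\circ$ coincides with the right operation of the Novikov-associative algebra produced in the first step. The same commutativity identities imply $a\vdash b=b\dashv a$, so $(T,\vdash,\dashv)$ is a commutative Novikov-associative algebra; by the remark following the definition of Novikov-associative algebra, its operation $\dashv$ carries a Novikov algebra structure, which is precisely $(T,\circ)$.

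I do not expect a serious obstacle: once the correspondence between operations is fixed, both the commutativity checks and the identification $\circ=\dashv$ are bookkeeping, and the heavy lifting (verifying the fourteen Novikov-tridendriform axioms) has already been done in Proposition~\mref{prop:tdendnd}. The only point requiring a little care is making sure that the sign conventions and the pairings $(\nwarrow\leftrightarrow\searrow,\,\nearrow\leftrightarrow\swarrow,\,\wedge\leftrightarrow\vee)$ match those of Remark~\mref{re:pn}, so that the post-Novikov axioms of Definition~\mref{defn:pn} are obtained by specializing the commutative Novikov-tridendriform axioms rather than by a fresh computation.
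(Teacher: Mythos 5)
Your proposal is correct and follows essentially the same route as the paper, whose entire proof is the remark that this is ``just a commutative version of the proof of Proposition~\ref{prop:tdendnd}.'' You make that reduction explicit and slightly cleaner by invoking Proposition~\ref{prop:tdendnd} as a black box, checking the three commutativity identities for the induced Novikov-tridendriform structure, and then appealing to Remark~\ref{re:pn} and the identification $\circ=\dashv$ for the Novikov algebra statement.
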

\begin{proof}
It's just a commutative version of the proof of Proposition~\mref{prop:tdendnd}.
\end{proof}

\subsection{Koszul dual operads of operads of differential ($q$-tri)dendriform algebras of weight zero}
\nc\partia{\partial}
We are in a position to describe the Koszul dual operad of the operad of differential ($q$-tri)dendriform algebras of weight zero.

\begin{defn} \cite{Lod01}
A {\bf diassociative algebra} is a \bfk-module $D$ with two binary operations
$\vdash$ and  $\dashv$ on $D$ such that
\begin{align*}
(a\dashv b)\dashv c&=~a\dashv(b\dashv c),\\
(a\dashv b)\dashv c&=~a\dashv(b\vdash c),\\
(a\vdash b)\dashv c&=~a\vdash (b\dashv c),\\
(a \dashv b)\vdash c&=~a\vdash (b\vdash c),\\
(a\vdash b)\vdash c&=~a\vdash (b\vdash c),\quad{\forall a,b,c\in D.}
\end{align*}
\end{defn}

\begin{remark}
Let $A$ be an algebra and ${\rm End}(A)$ be the set of all linear maps from $A$ to $A$. The
{\bf centroid} of $A$~\cite{FRH} is defined to be
$${\rm Cent}(A) := \{\partia\in {\rm End}(A) \,| \,\partia (ab) = \partia (a)b = a\partia(b),\quad\forall a,b\in A\}.$$
\end{remark}

\begin{prop}
The Koszul dual operad of the operad of differential dendriform algebras of weight zero is the operad of diassociative algebras with a centroid consisting of one  square-zero linear operator.
\mlabel{pp:dualdiffdend}
\end{prop}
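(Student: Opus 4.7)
The plan is to realize $\mathcal{P} := \mathrm{DiffDend}_0$ as a binary-and-unary quadratic operad and then apply the standard Koszul dualization recipe $\mathcal{P}^! = \mathcal{F}(E^\vee)/(R^\perp)$. First I would record the presentation $\mathcal{P} = \mathcal{F}(E)/(R)$ whose generating $\mathbb{S}$-module is $E(1)=\bfk\, d$ and $E(2)$ the usual four-dimensional dendriform module on $\prec,\succ$; the space $R$ of weight-two relations splits, by arity, into three disjoint pieces:
\begin{enumerate}
\item In arity $3$: the three Loday dendriform relations (no occurrence of $d$);
\item In arity $2$: the two Leibniz-type relations
\[
d\circ_1\!\prec\; =\; \prec\circ_1(d,\id)+\prec\circ_2(\id,d),\qquad d\circ_1\!\succ\; =\; \succ\circ_1(d,\id)+\succ\circ_2(\id,d);
\]
\item In arity $1$: no relation is imposed on $d\circ d$.
\end{enumerate}

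Next, I would apply Koszul duality piece by piece. The arity-$3$ part is classical: Loday's computation identifies the orthogonal of the three dendriform relations with the five diassociative relations on the dual generators $\dashv := \prec^\vee$ and $\vdash := \succ^\vee$. In arity $1$, the absence of any relation on $d\circ d$ means the orthogonal is the whole one-dimensional space, forcing $\partial\circ\partial=0$ in $\mathcal{P}^!$, where $\partial:=d^\vee$; equivalently, $\partial$ is a square-zero unary operator. For the mixed arity-$2$ piece, the ambient space of weight-two compositions involving exactly one $\partial$ and one of $\dashv,\vdash$ is six-dimensional (three positions for $\partial$ per binary operation); quotienting by the orthogonal of the two Leibniz relations leaves a four-dimensional space of relations, two per binary operation. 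A direct computation of the orthogonal, in the natural basis and with the Koszul sign convention, produces exactly the centroid conditions
\[
\partial(a\dashv b)=\partial(a)\dashv b=a\dashv \partial(b),\qquad \partial(a\vdash b)=\partial(a)\vdash b=a\vdash \partial(b),
\]
which is the statement that $\partial\in\mathrm{Cent}(D)$ for both diassociative products.

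Finally, combining the three pieces identifies $\mathcal{P}^!$ with the operad encoding a diassociative algebra $(D,\dashv,\vdash)$ together with a unary operator $\partial$ in its centroid satisfying $\partial^2=0$, as claimed. The main obstacle I anticipate is the sign bookkeeping in the mixed arity-$2$ step: one must carefully track the degree shift $E^\vee = \mathcal{S}^{-1}\otimes E^*$, the sign twist in the $\circ_i$-pairing between $E$ and $E^\vee$, and the non-trivial $S_2$-action on the dendriform generators. Once a basis pairing is written out explicitly, solving the resulting linear system for the kernel yields the centroid conditions above and completes the identification.
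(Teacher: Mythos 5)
Your plan is correct, but it takes a genuinely different route from the paper. The paper disposes of this proposition in two lines by citing two known facts: that the Koszul dual of the dendriform operad is the diassociative operad (Loday), and that Koszul duality converts a derivation into a square-zero operator in the centroid (Loday, \emph{Georgian Math.\ J.}\ 2010, Proposition 7.2). You instead rebuild the quadratic presentation of $\mathrm{DiffDend}_0$ from scratch and compute the orthogonal $R^{\perp}$ piece by piece after splitting the weight-two relations by arity: the arity-$3$ block reproduces the classical $\mathrm{Dend}^{!}=\mathrm{Dias}$ computation, the empty arity-$1$ block forces $\partial\circ\partial=0$ on the dual unary generator, and the mixed arity-$2$ block turns the two Leibniz relations into the centroid conditions. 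Your dimension counts are consistent (the orthogonal of the two Leibniz relations inside the six-dimensional mixed space is four-dimensional, matching the two centroid identities per product), and your implicit use of weight zero is exactly what keeps the Leibniz relation quadratic --- for $\lambda\neq 0$ the term $\lambda\, d(a)\prec d(b)$ is cubic in the generators and the operad is no longer quadratic, so the restriction to weight zero is essential and you respect it. What your approach buys is self-containedness: the paper's citation of Loday's Proposition 7.2 is, strictly speaking, for the operad of \emph{associative} algebras with derivation, and transferring it to the dendriform setting requires exactly the arity-wise orthogonality computation you describe; you make that transfer explicit rather than asserting it. What it costs is the sign bookkeeping in the pairing $E\otimes E^{\vee}\to\bfk$ (the naive unsigned pairing does \emph{not} make the centroid conditions orthogonal to the Leibniz relation --- the signs coming from the suspension $\mathcal{S}^{-1}\otimes E^{*}$ are genuinely needed), which you correctly flag as the delicate step but do not carry out. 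As a proof plan this is sound; to turn it into a complete proof you would either finish that linear algebra or fall back on the paper's citation for the mixed block.
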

\begin{proof}
It follows from the fact that the Koszul dual operad of the operad of dendriform algebras is the operad of diassociative algebras{~\mcite{Lod01}} and the fact that the Koszul duality of a derivation is a square-zero linear operator in the centroid~\cite[Proposition 7.2]{Lod10}.
\end{proof}

The following concept is a generalization of the triassociative algebra~\cite{LoRo04}.

\begin{defn}
Let $q\in\bfk$. A {\bf $q$-triassociative algebra} is a \bfk-module $T$ with three binary operations
$\vdash$ , $\dashv$ and $\perp$ on $T$ such that
\begin{equation*}
\begin{split}
(a\dashv b)\dashv c&=~a\dashv(b\dashv c),\\
(a\dashv b)\dashv c&=~a\dashv(b\vdash c),\\
(a\vdash b)\dashv c&=~a\vdash (b\dashv c),\\
(a \dashv b)\vdash c&=~a\vdash (b\vdash c),\\
(a\vdash b)\vdash c&=~a\vdash (b\vdash c),\\
(a\perp b)\perp c&=~a\perp (b\perp c),\quad{\forall a,b,c\in T.}
\end{split}
\quad
\begin{split}
(a\dashv b)\dashv c&=~qa\dashv (b\perp c),\\
(a\perp b)\dashv c&=~a\perp (b\dashv c),\\
(a\dashv b)\perp c&=~a\perp (b\vdash c),\\
(a\vdash b)\perp c&=~a\vdash (b\perp c),\\
q(a\perp b)\vdash c&=~a\vdash (b\vdash c),\\
\,
\end{split}
\end{equation*}
\end{defn}

\begin{prop}
Let $q\in \bfk$ be invertible.
The Koszul dual operad of the operad of differential $q$-tridendriform algebras of weight zero is the operad of $\frac{1}{q}$-triassociative algebras with a centroid consisting of one  square-zero linear operator.
\mlabel{pp:dualdiffdend}
\end{prop}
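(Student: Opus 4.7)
The plan is to mimic the very short proof of Proposition~\mref{pp:dualdiffdend}: compute the Koszul dual of the purely operadic part (the $q$-tridendriform relations), apply the known Koszul duality between derivations and centroid-valued square-zero maps, and combine the two independently.

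For the operadic part, I would verify that the Koszul dual of the operad of $q$-tridendriform algebras is the operad of $\frac{1}{q}$-triassociative algebras. The generating space in arity two is $E=\bfk\{\prec,\succ,\bullet\}$, and the eleven quadratic relations of Definition~\mref{defn:qtrid}, after expanding $\star_q=\prec+\succ+q\bullet$, cut out a subspace $R$ of the free operad on $E$ in arity three. Computing the orthogonal $R^{\perp}$ under the standard sign-twisted Koszul pairing proceeds exactly as in Loday-Ronco's calculation of the Koszul dual of $Tridend$ for $q=1$~\cite{LoRo04}. The rescaling $q\mapsto 1/q$ arises because the defining relations $(a\star_q b)\succ c=a\succ(b\succ c)$ and $(a\prec b)\prec c=a\prec(b\star_q c)$ package $\prec$, $\succ$, and $q\bullet$ as a single "associative-like" operation; under the pairing this forces the dual three-generator relations involving $\perp$ to be weighted by $1/q$, producing precisely the relations of a $\frac{1}{q}$-triassociative algebra. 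The invertibility hypothesis on $q$ is exactly what legalizes the substitution.

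For the derivation part, I invoke~\cite[Proposition~7.2]{Lod10}: the Koszul dual of a weight-zero derivation on a binary quadratic operad is a square-zero linear endomorphism belonging to the centroid of the dual algebras. Since the derivation is added as an independent unary generator with a quadratic relation (the weight-zero Leibniz rule on each of $\prec,\succ,\bullet$), its dualization happens orthogonally to the dualization of the binary part and yields the same conclusion as in the dendriform case.

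Assembling the two computations gives the statement. The only genuine technical obstacle is the bookkeeping of the $q\mapsto 1/q$ rescaling through the Koszul pairing; once this is checked — essentially the $q=1$ computation of~\cite{LoRo04} with a parameter inserted in the right places — the rest of the argument is formal and exactly parallels Proposition~\mref{pp:dualdiffdend}.
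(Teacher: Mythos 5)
Your proposal follows essentially the same route as the paper: set up the Koszul pairing on the three binary generators, verify by direct computation that the orthogonal complement of the $q$-tridendriform relations is the set of $\frac{1}{q}$-triassociative relations, and append Loday's result \cite[Proposition~7.2]{Lod10} that the Koszul dual of a weight-zero derivation is a square-zero operator in the centroid. One minor slip: Definition~\mref{defn:qtrid} has seven quadratic relations, not eleven; eleven is the number of relations on the dual ($q$-triassociative) side, consistent with the dimension count $7+11=2\cdot 3^2$ for the orthogonality.
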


\begin{proof}
Let
$$V:=\bfk \{\prec,\succ,\bullet\}\,\text{ and }\, V^\ast :=\bfk\{\dashv,\vdash,\perp\}:=\bfk\{\prec^\ast,\succ^\ast,\bullet^\ast\}.$$
Define
$$ \langle ~, ~\rangle: \bfk \Big\{(a\cdot_\mu b)\cdot_\nu c,\, a\cdot_\mu (b\cdot_\nu c)\mid \cdot_\mu,\cdot_\nu\in V \Big\} \otimes
\bfk \Big\{(a\cdot_\mu^\ast b)\cdot_\nu^\ast c, \, a\cdot_\mu^\ast (b\cdot_\nu^\ast c)\mid \cdot_\mu,\cdot_\nu\in V\Big\}\to \bfk$$
by setting
\begin{equation*}
\begin{split}
&\langle (a\cdot_\mu b)\cdot_\nu c,\, (a\cdot_\mu^\ast b)\cdot_\nu^\ast c\rangle :=1,\\
&\langle a\cdot_\mu (b\cdot_\nu c),\, a\cdot_\mu^\ast(b \cdot_\nu^\ast c)\rangle :=-1, \\
&\langle ~, ~\rangle :=0,\text{ otherwise}.
\end{split}
\end{equation*}
Denote by
 \begin{align*}
R_{\rm trid}:=&\ \Big\{
(a \prec b) \prec c- \ a \prec( b \star_q c),
 (a \succ b) \prec c- \ a \succ(b \prec c),
 (a\star_q b) \succ c-\ a \succ(b \succ c),\\
&\ \ (a \succ b) \bullet c- \ a \succ(b \bullet c),
(a \prec b) \bullet c- \ a \bullet(b \succ c),
 (a \bullet b) \prec c- \ a \bullet(b \prec c),\\
&\ \ (a \bullet b) \bullet c- \ a \bullet(b \bullet c)\Big\},
\end{align*}
the set of relations of $q$-tridendriform algebras in Definition~\ref{defn:qtrid}.
Then the set $R_{\rm tria}$ of relations of the Koszul dual operad  of $q$-tridendriform algebras is given by
$$R_{\rm tria}=\left\{ r\in\bfk\{(a\cdot_\mu^\ast b)\cdot_\nu^\ast c, \, a\cdot_\mu^\ast (b\cdot_\nu^\ast c) \mid \cdot_\mu,\cdot_\nu\in V\} \,\,\big|\,\,
\langle R_{\rm trid}, r \rangle =0\right\}.$$
By a direct computation,
$R_{\rm tria}$ is the set of relations of $\frac{1}{q}$-triassociative algebras.
Further in light of~\cite[Proposition 7.2]{Lod10}, the Koszul duality of a derivation is a square-zero linear operator in the centroid.
\end{proof}

\section{Free weighted commutative differential ($q$-tri)dendriform algebras}\mlabel{sec:difftridend}
In this section, we mainly construct free commutative differential $q$-tridendriform algebras of weight $\lambda$ via the quasi-shuffle product of weight $q$. As an application, free differential dendriform algebras of weight $\lambda$ are obtained via the shuffle product.

\subsection{Free commutative (tri)dendriform algebras}
This subsection is devoted to recall the construction of free tridendriform (resp. dendriform) algebras via the quasi-shuffle (resp. shuffle) product.

For a \bfk-module $V$, denote by
\[
T(V):=\bigoplus_{k \geqslant 0} V^{\otimes k}\,\text{ and }\,  T^{+}(V) :=\bigoplus_{k \geqslant 1} V^{\otimes k}.
\]
Let $A$ be a $\mathbf{k}$-algebra and $q\in \bfk$.
For
\begin{equation}\label{eq:frakab}
\mathfrak{a} :=a_1 \otimes \mathfrak{a}^{\prime}:=a_1 \otimes \cdots \otimes a_m  \in A^{\otimes m}, \quad \mathfrak{b}:=b_1 \otimes \mathfrak{b}^{\prime}:= b_1 \otimes \cdots \otimes b_n  \in A^{\otimes n},
\end{equation}
the quasi-shuffle product $\ast_q$ of weight $q$ on $T(A)$ is defined by~\cite{Guo12}
\begin{equation*}
\mathfrak{a} \ast_q \mathfrak{b} :=a_1 \otimes\big(\mathfrak{a}^{\prime} \ast_q \mathfrak{b}\big)+b_1 \otimes\big(\mathfrak{a} \ast_q \mathfrak{b}^{\prime}\big)+q\big(a_1 b_1\big) \otimes\big(\mathfrak{a}^{\prime} \ast_q \mathfrak{b}^{\prime}\big).
\end{equation*}
This product is restricted to an associative product $\ast_q$ on $T^{+}(A)$.
When $q =0$, $\ast_0$ is the shuffle product. When $q =1$, $\ast_1$ is the quasi-shuffle product~\cite{Hoffman}.
With the above notations, we have the following result.

\begin{lemma}~\cite[Theorem 5.2.4]{Guo12}\label{thm:tridend}
{\rm{\begin{enumerate}
\item \label{item:dend}
Let $V$ be a \bfk-module. In the shuffle product algebra $\left(T^{+}(V), \ast_0 \right)$, define binary operations $\prec_V$ and $\succ_V$ by
\[
\mathfrak{a} \prec_V \mathfrak{b}:=a_1 \otimes\left(\mathfrak{a}^{\prime}\ast_0\mathfrak{b}\right), \, \mathfrak{a} \succ_V \mathfrak{b}:=b_1 \otimes\left(\mathfrak{a}\ast_0\mathfrak{b}^{\prime}\right),
\]
for pure tensors $\mathfrak{a}=a_1 \otimes \mathfrak{a}^{\prime}, \mathfrak{b}=b_1 \otimes \mathfrak{b}^{\prime}$. Then the triple $\left(T^{+}(V), \prec_V, \succ_V\right)$, together with the natural embedding $V\hookrightarrow T^{+}(V)$, is the free commutative dendriform algebra on $V$.
\item \label{item:tridend} Let $A$ be a $\mathbf{k}$-algebra. In the quasi-shuffle product algebra $\left(T^{+}(A), \ast_1\right)$, define binary operations $\prec_A, \succ_A, \bullet_A$ by
$$
\begin{aligned}
\mathfrak{a} \prec_A \mathfrak{b}:=& a_1 \otimes\left(\mathfrak{a}^{\prime} \ast_1 \mathfrak{b}\right), \,
\mathfrak{a} \succ_A \mathfrak{b}:=b_1 \otimes\left(\mathfrak{a} \ast_1 \mathfrak{b}^{\prime}\right), \,
\mathfrak{a} \bullet_A \mathfrak{b}:= \left(a_1 b_1\right) \otimes\left(\mathfrak{a}^{\prime} \ast_1 \mathfrak{b}^{\prime}\right),
\end{aligned}
$$
for pure tensors $\mathfrak{a}=a_1 \otimes \mathfrak{a}^{\prime}$ and $\mathfrak{b}=b_1 \otimes \mathfrak{b}^{\prime}$. Then the quadruple $\left(T^{+}(A), \prec_A, \succ_A,
\bullet_A\right)$, together with the natural embedding $A \hookrightarrow T^{+}(A)$, is the free commutative tridendriform algebra on $A$.
\end{enumerate}}}
\end{lemma}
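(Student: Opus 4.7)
The plan is to prove both parts by the same three-step strategy: verify the algebra axioms, verify the commutativity condition, and establish the universal property by induction on tensor length.

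For part~\mref{item:dend}, the shuffle product $\ast_0$ is already known to be commutative and associative on $T^+(V)$. The key observation is the decomposition
\[
\mathfrak{a} \ast_0 \mathfrak{b} = \mathfrak{a} \prec_V \mathfrak{b} + \mathfrak{a} \succ_V \mathfrak{b},
\]
which is an immediate consequence of the recursion for $\ast_0$. To verify the three dendriform axioms of Definition~\mref{defn:dend}, I would compute each side by reading off the leading tensor factor of the output and then use associativity of $\ast_0$ together with induction on the combined length $m+n+p$ of the three arguments. Commutativity $\mathfrak{a} \succ_V \mathfrak{b} = \mathfrak{b} \prec_V \mathfrak{a}$ is instantaneous from the symmetry $\mathfrak{a} \ast_0 \mathfrak{b}' = \mathfrak{b}' \ast_0 \mathfrak{a}$.

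For the universal property in part~\mref{item:dend}, given a commutative dendriform algebra $(D, \prec, \succ)$ and a linear map $f: V \to D$, define $\bar f: T^+(V) \to D$ inductively by $\bar f(v) := f(v)$ for $v\in V$ and
\[
\bar f(v_1 \otimes v_2 \otimes \cdots \otimes v_n) := f(v_1) \prec \bar f(v_2 \otimes \cdots \otimes v_n), \quad n\ge 2.
\]
I would then show by induction on $|\mathfrak{a}|+|\mathfrak{b}|$ that $\bar f(\mathfrak{a} \prec_V \mathfrak{b}) = \bar f(\mathfrak{a}) \prec \bar f(\mathfrak{b})$, and derive the corresponding identity for $\succ$ from commutativity of $D$ via $\bar f(\mathfrak{a} \succ_V \mathfrak{b}) = \bar f(\mathfrak{b} \prec_V \mathfrak{a}) = \bar f(\mathfrak{b}) \prec \bar f(\mathfrak{a}) = \bar f(\mathfrak{a}) \succ \bar f(\mathfrak{b})$. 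Uniqueness follows because any dendriform morphism extending $f$ must satisfy the recursive formula, since $v_1 \otimes \mathfrak{a}' = v_1 \prec_V \mathfrak{a}'$.

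Part~\mref{item:tridend} proceeds identically but with the quasi-shuffle $\ast_1$ in place of $\ast_0$, and the enlarged decomposition
\[
\mathfrak{a} \ast_1 \mathfrak{b} = \mathfrak{a} \prec_A \mathfrak{b} + \mathfrak{a} \succ_A \mathfrak{b} + \mathfrak{a} \bullet_A \mathfrak{b},
\]
matching $\star_1 = \prec + \succ + \bullet$. The additional $\bullet$-axioms and the identity $\mathfrak{a} \bullet_A \mathfrak{b} = \mathfrak{b} \bullet_A \mathfrak{a}$ follow from the symmetry of the quasi-shuffle together with commutativity of the product in $A$. The main obstacle will be the bookkeeping of the seven tridendriform axioms against the three-fold recursion of $\ast_1$: verifying, for instance, $(\mathfrak{a} \prec_A \mathfrak{b}) \bullet_A \mathfrak{c} = \mathfrak{a} \bullet_A (\mathfrak{b} \succ_A \mathfrak{c})$ requires carefully tracking which insertion in the quasi-shuffle contributes to each summand. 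Once a uniform inductive framework (fixing the outermost recursion branch on each side) is set up, the verifications reduce to mechanical case-checks, and the universal property is handled by the same extension procedure as in part~\mref{item:dend}.
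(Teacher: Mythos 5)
This lemma is not proved in the paper at all: it is imported verbatim from \cite[Theorem~5.2.4]{Guo12}, so there is no internal argument to compare yours against. Your outline reproduces the standard proof from that source and is correct in its main lines: the decomposition $\mathfrak{a}\ast\mathfrak{b}=\mathfrak{a}\prec\mathfrak{b}+\mathfrak{a}\succ\mathfrak{b}\,(+\,\mathfrak{a}\bullet\mathfrak{b})$, the leading-factor computation of each operation, and the recursive extension $\lbar{f}(a_1\otimes\mathfrak{a}'):=f(a_1)\prec\lbar{f}(\mathfrak{a}')$ for the universal property are exactly the right ingredients. Three small points deserve care. First, the axiom verification needs no induction: since each of $\prec,\succ,\bullet$ is ``$\ast$ with a prescribed first letter,'' every one of the (tri)dendriform identities collapses to a single application of associativity of $\ast_q$, as in your sample computation of $(\mathfrak{a}\prec\mathfrak{b})\bullet\mathfrak{c}=\mathfrak{a}\bullet(\mathfrak{b}\succ\mathfrak{c})$. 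Second, in part~\mref{item:tridend} the freeness is relative to \emph{algebra} morphisms $f:A\to(T,\bullet_T)$, not arbitrary linear maps, and $A$ must be commutative (implicit in the statement) for $\mathfrak{a}\bullet_A\mathfrak{b}=\mathfrak{b}\bullet_A\mathfrak{a}$ to hold; the multiplicativity $f(a_1b_1)=f(a_1)\bullet_T f(b_1)$ is precisely what makes the $\bullet$-compatibility of $\lbar{f}$ go through. Third, your induction on $|\mathfrak{a}|+|\mathfrak{b}|$ must carry the compatibilities for \emph{all} operations simultaneously, because unwinding $\lbar{f}(\mathfrak{a}\prec\mathfrak{b})=f(a_1)\prec\lbar{f}(\mathfrak{a}'\ast\mathfrak{b})$ invokes the $\ast$-compatibility on shorter tensors; this is harmless since $\succ$-compatibility at a given length follows from $\prec$-compatibility at the same length by commutativity, but the $\bullet$ case in part~\mref{item:tridend} needs its own clause in the induction.
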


\begin{remark}
In Lemma~\ref{thm:tridend}-\ref{item:tridend}, let $\ast_q$ replace $\ast_1$, we obtain that the quadruple $\left(T^{+}(A), \prec_A, \succ_A,
\bullet_A\right)$, together with the natural embedding $A \hookrightarrow T^{+}(A)$, is the free commutative $q$-tridendriform algebra on $A$.
\mlabel{rk:free comm qtri}
\end{remark}

\subsection{Free weighted commutative differential $q$-tridendriform algebras}
This subsection focuses on the construction of free commutative differential $q$-tridendriform algebras of weight $\lambda$.

Let $(A, d_0)$ be a commutative differential \bfk-algebra of weight $\lambda$.
For $\frak a=a_1\ot\frak a'\in A^{\otimes m}$ and $\frak b=b_1\ot\frak b'\in A^{\otimes n}$ as in Eq.~\eqref{eq:frakab},
define binary operations $\prec_A, \succ_A, \bullet_A:A \otimes A \rightarrow A$ by
\begin{equation*}
\mathfrak{a} \prec_A \mathfrak{b}:= a_1 \otimes\big(\mathfrak{a}^{\prime} \ast_q \mathfrak{b}\big),\,
\mathfrak{a} \succ_A
\mathfrak{b}:= b_1 \otimes\big(\mathfrak{a} \ast_q\mathfrak{b}^{\prime}\big),\,
\mathfrak{a} \bullet_A \mathfrak{b}:= \big(a_1 b_1\big) \otimes\big(\mathfrak{a}^{\prime} \ast_q \mathfrak{b}^{\prime}\big).
\end{equation*}
By Remark~\mref{rk:free comm qtri}, the quadruple  $\left(T^{+}(A), \prec_A, \succ_A,
\bullet_A\right)$ is the free commutative $q$-tridendriform algebra on $A$.
So for each pure tensor $\fraka = a_1\otimes \cdots \otimes a_m$ in $A^{\otimes m}$, there is a unique $m$-arity operation $f_{\frak a}$ in the operad of commutative $q$-tridendriform algebras such that $\fraka = a_1\otimes \cdots \otimes a_m =f_{\frak a}(a_1, \cdots, a_m)$.

By Proposition~\ref{prop:wdiff}, the linear operator $d_0: A\rightarrow A$
extends to a unique derivation $d_A: T^{+}(A) \rightarrow T^+(A)$. In more details, for $m\geq 1$ and
\begin{align*}
\fraka = a_1\ot\cdots\ot a_m = f_\fraka(a_1, \cdots, a_m) \in A^{\otimes m},
\end{align*}
we have
\begin{align}\label{eq:diffindu}
d_A(\frak a)=& \ d_A(f_{\frak a}(a_1,\cdots,a_m))\nonumber\\
=& \ \sum_{k=1}^{m} \lambda^{k-1}\bigg(\sum_{1\leq i_1<\cdots< i_k \leq m}
f_\fraka(a_1,\cdots, d_0(a_{i_1}),\cdots,d_0(a_{i_2}),\cdots,d_0(a_{i_k}),\cdots,a_m\bigg).
\end{align}
For example, if $\frak a=a_1$, then
\begin{equation}
d_A(a_1) = d_0(a_1).
\mlabel{eq:da01}
\end{equation}
If $\frak a=a_1\ot a_2=a_1\prec_A a_2=:f_{\frak a}(a_1, a_2)$, then
\begin{align*}
d_A(\frak a)=& \ d_A(f_{\frak a}(a_1,a_2))\\
=& \ f_{\frak a}(a_1,d_0(a_2))+f_{\frak a}(d_0(a_1), a_2)+\lambda f_{\frak a}(d_0(a_1),d_0(a_2))\quad\text{(by Eq.~\eqref{eq:diffindu})}\\
=& \ a_1\prec_A d_0(a_2)+d_0(a_1)\prec_A a_2+\lambda d_0(a_1)\prec d_0(a_2).
\end{align*}

\begin{prop}
Let $(A, d_0)$ be a commutative differential \bfk-algebra of weight $\lambda$.
Then the quintuple $(T^{+}(A), \prec_A,\succ_A, \bullet_A, d_A)$
is a commutative differential $q$-tridendriform algebra of weight $\lambda$.
\mlabel{prop:difftridend}
\end{prop}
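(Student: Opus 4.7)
First, by Remark~\mref{rk:free comm qtri}, the quadruple $(T^+(A), \prec_A, \succ_A, \bullet_A)$ is already a commutative $q$-tridendriform algebra, so it remains to show that $d_A$ satisfies the three weighted Leibniz identities \eqref{eq:difftri2}, \eqref{eq:difftri4} and \eqref{eq:difftri3}. By linearity it suffices to verify them on pairs of pure tensors $\fraka\in A^{\otimes m}$ and $\frakb\in A^{\otimes n}$.

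The key preliminary step is a \emph{concatenation Leibniz rule} for $d_A$: for any $a\in A$ and $\frakx\in T^+(A)$,
\begin{equation*}
d_A(a\otimes \frakx) \;=\; d_0(a)\otimes \frakx \;+\; a\otimes d_A(\frakx) \;+\; \lambda\, d_0(a)\otimes d_A(\frakx).
\end{equation*}
This is immediate from Eq.~\eqref{eq:diffindu} applied to $a\otimes\frakx$: splitting the sum over subsets $S$ of positions according to whether the leftmost position lies in $S$, the $1\notin S$ piece contributes $a\otimes d_A(\frakx)$, while the $1\in S$ piece factors as $d_0(a)\otimes(\cdot)$ and produces the remaining two terms.

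The three Leibniz identities are then proved simultaneously by induction on $N:=\ell(\fraka)+\ell(\frakb)$. The base case $N=2$ (both tensors of length one) follows for $\prec_A$ and $\succ_A$ directly from the concatenation rule applied to $a_1\otimes b_1$ and $b_1\otimes a_1$, and for $\bullet_A$ from the weighted Leibniz rule of $d_0$ on $A$, since $a_1\bullet_A b_1=a_1b_1\in A$ and $d_A|_A=d_0$ by Eq.~\eqref{eq:da01}. For the inductive step on \eqref{eq:difftri2} with $\ell(\fraka)\geq 2$, write $\fraka=a_1\otimes\fraka'$ so that $\fraka\prec_A\frakb = a_1\otimes(\fraka'\ast_q\frakb)$; apply the concatenation rule to peel off $a_1$ from the LHS, expand $d_A(\fraka)$ on the RHS by the same rule, and group the resulting pure-tensor terms by their leading factor. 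The inductive hypothesis at total length $N-1$, applied to \eqref{eq:difftri2}, \eqref{eq:difftri4} and \eqref{eq:difftri3} and combined with weights $1,1,q$, yields the Leibniz identity for $\ast_q=\prec_A+\succ_A+q\bullet_A$ on the pair $(\fraka',\frakb)$, which is exactly what is needed to collapse the grouped terms into $d_A(a_1\otimes(\fraka'\ast_q\frakb))$ via the concatenation rule in reverse. The subcase $\ell(\fraka)=1$ is handled directly by the concatenation rule. The identity \eqref{eq:difftri4} for $\succ_A$ is strictly symmetric (peeling off $b_1$ from $\frakb$), and \eqref{eq:difftri3} for $\bullet_A$ follows by the same template after additionally expanding $d_0(a_1 b_1)$ by the weighted Leibniz rule of $d_0$ on $A$.

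The main obstacle is the bookkeeping in the inductive step: expanding $d_A(\fraka)$ via the concatenation rule introduces three summands, the $\lambda$-weighted cross term $\lambda\, d_A(\fraka)\prec_A d_A(\frakb)$ three more, and all of them (together with $\fraka\prec_A d_A(\frakb)$) must be sorted by leading tensor factor and recollected into the two groups $d_0(a_1)\otimes(\cdot)$ and $a_1\otimes(\cdot)$ so that the inductive Leibniz identity for $\ast_q$ can collapse each group before the concatenation rule is applied in reverse.
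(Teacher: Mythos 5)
Your proposal is correct, but it proves the statement by a genuinely different and more explicit route than the paper. The paper's own proof is a two-line appeal to the operadic framework of Section~\mref{sec:freediffpalg}: since $d_A$ is, by construction via Proposition~\mref{prop:wdiff}, a derivation of weight $\lambda$ in the sense of Definition~\mref{defn:dpo} on the free commutative $q$-tridendriform algebra $T^{+}(A)$, the three identities \eqref{eq:difftri2}--\eqref{eq:difftri3} hold automatically (they are just the commuting diagram evaluated on the binary generating operations), and Remark~\mref{rk:free comm qtri} supplies the underlying $q$-tridendriform structure. You instead verify the Leibniz identities by hand: you extract from Eq.~\eqref{eq:diffindu} the concatenation rule $d_A(a\otimes \mathfrak{x})=d_0(a)\otimes \mathfrak{x}+a\otimes d_A(\mathfrak{x})+\lambda\,d_0(a)\otimes d_A(\mathfrak{x})$ and run a (strong) induction on total tensor length, using the weighted-$(1,1,q)$ combination of the three inductive identities to get the Leibniz rule for $\ast_q$ on the shorter pair. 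This checks out, including the $\bullet_A$ case where $d_0(a_1b_1)$ is expanded by the weighted Leibniz rule on $A$ (note only that there the relevant pair $(\mathfrak{a}',\mathfrak{b}')$ has total length $N-2$, so the induction must be taken in its strong form). What your approach buys is self-containedness: it does not rely on the operadic formalism, and in particular it sidesteps the point, left implicit in the paper, that Proposition~\mref{prop:wdiff} is stated for the free $\cal P$-algebra on a \emph{module} whereas $T^{+}(A)$ is free on the \emph{algebra} $A$ (a quotient to which the derivation must be seen to descend, which your direct computation establishes outright). What it costs is length and the bookkeeping you yourself flag; the paper's argument, once the operadic setup is granted, is essentially immediate.
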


\begin{proof}
The proof follows from Remark~\ref{rk:free comm qtri} and the fact that
the $d_A$ is a derivation on $T^{+}(A)$ of weight $\lambda$.
\end{proof}

Now we are ready for the main result in this subsection.

\begin{theorem}
Let $(A, d_0)$ be a commutative differential \bfk-algebra of weight $\lambda$.
Then the quintuple $\left(T^{+}(A), \prec_A,\succ_A, \bullet_A, d_A\right)$, together with natural embedding
$j_A: A\hookrightarrow T^{+}(A)$,
is the free commutative differential $q$-tridendriform algebra of weight $\lambda$ on $(A, d_0)$.
\mlabel{thm:difftridend}
\end{theorem}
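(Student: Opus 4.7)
The plan is to verify the universal property of $(T^{+}(A), \prec_A, \succ_A, \bullet_A, d_A)$. Let $(T, \prec, \succ, \bullet, d)$ be any commutative differential $q$-tridendriform algebra of weight $\lambda$, and let $f \colon (A, d_0) \to (T, \bullet, d)$ be a morphism of commutative differential \bfk-algebras of weight $\lambda$, that is, $f$ is a \bfk-algebra map satisfying $f \circ d_0 = d \circ f$. I would show that $f$ extends uniquely to a morphism $\bar{f}$ of commutative differential $q$-tridendriform algebras of weight $\lambda$ from $(T^{+}(A), \prec_A, \succ_A, \bullet_A, d_A)$ to $(T, \prec, \succ, \bullet, d)$ with $\bar{f} \circ j_A = f$.

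First, note that $(T, \bullet, d)$ is a commutative differential \bfk-algebra of weight $\lambda$: associativity and commutativity of $\bullet$ come from Definition~\mref{defn:qtrid} together with the commutativity hypothesis on $T$, while Eq.~\eqref{eq:difftri3} is precisely the weighted Leibniz rule for $\bullet$. Now invoke Remark~\mref{rk:free comm qtri}: since $(T^{+}(A), \prec_A, \succ_A, \bullet_A)$ is the free commutative $q$-tridendriform algebra on the algebra $A$, the $\bullet$-algebra morphism $f \colon (A, \cdot_A) \to (T, \bullet)$ extends uniquely to a morphism $\bar{f} \colon T^{+}(A) \to T$ of commutative $q$-tridendriform algebras.

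The core step is to verify that $\bar{f}$ intertwines the two derivations, $\bar{f} \circ d_A = d \circ \bar{f}$. For a pure tensor $\fraka = a_1 \otimes \cdots \otimes a_m = f_{\fraka}(a_1, \ldots, a_m)$ in $T^{+}(A)$, where $f_{\fraka}$ is the associated $m$-arity operation in the commutative $q$-tridendriform operad, the $q$-tridendriform morphism property of $\bar{f}$ gives $\bar{f}(\fraka) = f_{\fraka}(f(a_1), \ldots, f(a_m))$. Applying $\bar{f}$ to the defining formula~\eqref{eq:diffindu} for $d_A(\fraka)$ and substituting $f \circ d_0 = d \circ f$ yields
\[
\bar{f}(d_A(\fraka)) \;=\; \sum_{k=1}^{m} \lambda^{k-1}\!\!\!\sum_{1 \leq i_1 < \cdots < i_k \leq m}\!\!\! f_{\fraka}\bigl(f(a_1), \ldots, d(f(a_{i_1})), \ldots, d(f(a_{i_k})), \ldots, f(a_m)\bigr).
\]
By Definition~\mref{defn:dpo} unpacked for the derivation $d$ on the $q$-tridendriform algebra $T$ (which is the same identity that underlies Proposition~\mref{prop:wdiff}), the right-hand side equals $d(f_{\fraka}(f(a_1), \ldots, f(a_m))) = d(\bar{f}(\fraka))$. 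Since $T^{+}(A)$ is spanned linearly by such pure tensors, this gives $\bar{f} \circ d_A = d \circ \bar{f}$.

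Uniqueness of $\bar{f}$ as a differential $q$-tridendriform morphism is automatic from its uniqueness as a $q$-tridendriform morphism via Remark~\mref{rk:free comm qtri}. I do not anticipate a serious obstacle: the operadic framework developed in Section~\mref{sec:freediffpalg} is designed precisely so that compatibility with weighted derivations on a free $\cal P$-algebra reduces to their equality on generators, and here that reduction collapses to the given identity $f \circ d_0 = d \circ f$ on $A$.
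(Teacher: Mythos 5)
Your proposal is correct in outline and shares the paper's skeleton: both reduce to the universal property, observe that $(T,\bullet_T,d_T)$ is a commutative differential algebra of weight $\lambda$ so that Remark~\mref{rk:free comm qtri} produces the unique $q$-tridendriform morphism $\lbar{f}$ with $\lbar{f}\circ j_A=f$, and then check that $\lbar{f}$ intertwines $d_A$ with $d_T$. Where you diverge is in that last check. The paper argues by induction on the tensor length $m$, writing $a_1\ot\fraka'=a_1\prec_A\fraka'$ and using only the single binary identity~\eqref{eq:difftri2} together with the morphism property of $\lbar{f}$; you instead push the closed formula~\eqref{eq:diffindu} for $d_A(\fraka)$ through $\lbar{f}$ and match it against the analogous closed formula for $d_T$ applied to $f_{\fraka}(f(a_1),\dots,f(a_m))$. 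Your route is shorter and more conceptual, but it silently uses a lemma the paper never states for non-free algebras: that a derivation of weight $\lambda$ on $T$ in the sense of Definition~\mref{defn:diffqtri}, i.e.\ satisfying only the three binary Leibniz rules \eqref{eq:difftri2}--\eqref{eq:difftri3}, automatically satisfies the full $m$-ary identity of Definition~\mref{defn:dpo} for every composite operation $f_\fraka$ in the operad. Proposition~\mref{prop:wdiff} supplies this only on the free algebra, where the formula is the \emph{definition} of $d_\phi$; on an arbitrary $T$ you must verify that iterating the binary weighted Leibniz rule across an operadic composite reproduces exactly the coefficients $\lambda^{k-1}$. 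This is true and routine (in a composite $g\circ_i h$, differentiating $s_1$ inputs feeding into $h$ and $s_2$ of the remaining slots of $g$ contributes $\lambda^{s_2}\cdot\lambda^{s_1-1}=\lambda^{s_1+s_2-1}$, matching the claimed exponent), but it is an induction on the operadic composition that your write-up elides with the phrase ``unpacked.'' Adding those few lines would make the argument complete; everything else, including the automatic uniqueness of $\lbar{f}$, matches the paper.
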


\begin{proof}
By Proposition~\ref{prop:difftridend}, the quintuple $\left(T^{+}(A), \prec_A,\succ_A, \bullet_A, d_A\right)$ is a commutative differential $q$-tridendriform algebra of weight $\lambda$. To prove the universal property, let the quintuple $(T,  \prec_T, \succ_T,\bullet_T, d_T)$ be a commutative differential $q$-tridendriform algebra of weight $\lambda$ and let $\psi:\left(A, \mathrm{d}_0\right) \rightarrow(T, \bullet_T, d_T )$ be a differential algebra morphism. Then $\psi$ is an algebra homomorphism and
\begin{equation}
d_T\psi = \psi d_0. \mlabel{eq:dted0}
\end{equation} So by Remark~\ref{rk:free comm qtri}, there is a unique $q$-tridendriform algebra homomorphism \[\lbar{\psi}:\left(T^+(A), \prec_A, \succ_A, \bullet_A\right)\rightarrow(T, \prec_T, \succ_T, \bullet_T)\]
 such that
$
\psi=\lbar{\psi} j_A.
$
\[\xymatrix@C=2cm{
 A \ar[d]_{\psi} \ar[r]^-{j_A}
                & T^{+}(A)
                \ar@{.>}[dl]_{\lbar{\psi}} \\
 T}
\]
It remains to prove
\begin{equation*}
d_T\Big(\lbar{\psi}(a_1\ot\cdots\ot a_m)\Big)=\lbar{\psi}\Big(d_A(a_1\ot\cdots\ot a_m)\Big)
\end{equation*}
for all pure tensors $a_1\ot\frak a':= a_1\ot\cdots\ot a_m\in T^{+}(A)$. We employ induction on $m\geq 1$. For the initial step of $m=1$, it follows from Eqs.~(\mref{eq:da01}) and~(\mref{eq:dted0}) that
\begin{align*}
d_T\Big(\lbar{\psi}(a_1)\Big)=& \ d_T\Big(\lbar{\psi}(j_A(a_1))\Big)
=d_T(\psi(a_1))=\psi(d_0(a_1))\\
=& \ \lbar{\psi}\Big(j_A(d_0(a_1))\Big)= \lbar{\psi}\Big(d_0(a_1)\Big)= \lbar{\psi}\Big(d_A(a_1)\Big).
\end{align*}
For the inductive step of $m\geq 2$,
we get
\begin{align*}
\lbar{\psi}\Big(d_A(a_1\ot\cdots\ot a_m)\Big)
=& \ \lbar{\psi}\Big(d_A(a_1\ot\frak a')\Big)
= \lbar{\psi}\Big(d_A(a_1\prec_A\frak a')\Big)\\
=& \ \lbar{\psi}\Big(d_0(a_1)\prec_A\frak a'+a_1\prec_A d_A(\frak a')+\lambda d_0(a_1)\prec_A d_A(\frak a')\Big)\quad\text{(by Eqs.~\eqref{eq:difftri2} and~(\mref{eq:da01}))}  \\
=& \ \lbar{\psi}(d_0(a_1))\prec_T\lbar{\psi}(\frak a')
+\lbar{\psi}(a_1)\prec_T\lbar{\psi}(d_A(\frak a'))+\lambda\lbar{\psi}(d_0(a_1))\prec_T \lbar{\psi}(d_A(\frak a'))\\
&\hspace{3cm}\text{(by $\lbar{\psi}$ being a $q$-tridendriform algebra homomorphism)}\\
=& \ \lbar{\psi}  j_A (d_0(a_1))\prec_T\lbar{\psi}(\frak a')
+\lbar{\psi}(a_1)\prec_T\lbar{\psi}(d_A(\frak a'))+\lambda\lbar{\psi}  j_A(d_0(a_1))\prec_T \lbar{\psi}(d_A(\frak a'))\\
=& \ \psi (d_0(a_1))\prec_T\lbar{\psi}(\frak a')
+\lbar{\psi}(a_1)\prec_T\lbar{\psi}(d_A(\frak a'))+\lambda\psi(d_0(a_1))\prec_T \lbar{\psi}(d_A(\frak a'))\\
=& \ d_T (\psi(a_1))\prec_T\lbar{\psi}(\frak a')
+\lbar{\psi}(a_1)\prec_T\lbar{\psi}(d_A(\frak a'))+\lambda d_T(\psi(a_1))\prec_T \lbar{\psi}(d_A(\frak a'))\\
&\hspace{7cm}\text{(by Eq.~(\mref{eq:dted0}))}\\
=& \ d_T (\lbar{\psi}  j_A(a_1))\prec_T\lbar{\psi}(\frak a')
+\lbar{\psi}(a_1)\prec_T\lbar{\psi}(d_A(\frak a'))+\lambda d_T(\lbar{\psi}  j_A(a_1))\prec_T \lbar{\psi}(d_A(\frak a'))\\
=& \ d_T (\lbar{\psi}(a_1))\prec_T\lbar{\psi}(\frak a')
+\lbar{\psi}(a_1)\prec_T\lbar{\psi}(d_A(\frak a'))+\lambda d_T(\lbar{\psi}(a_1))\prec_T \lbar{\psi}(d_A(\frak a'))\\
=& \ d_T (\lbar{\psi}(a_1) )\prec_T\lbar{\psi}(\frak a')
+\lbar{\psi}(a_1)\prec_T d_T (\lbar{\psi}(\frak a') )+\lambda d_T (\lbar{\psi}(a_1) )\prec_T d_T (\lbar{\psi}(\frak a') )\\
&\hspace{3cm}\text{(by the induction hypothesis)}\\
=\ &d_T\Big(\lbar{\psi}(a_1)\prec_T\lbar{\psi}(\frak a')\Big)= d_T\Big(\lbar{\psi}(a_1\prec_A\frak a')\Big)= d_T\Big(\lbar{\psi}(a_1\ot\frak a')\Big)\\
=& \ d_T\Big(\lbar{\psi}(a_1\ot\cdots\ot a_m)\Big).
\end{align*}
 This completes the proof.
\end{proof}

As an application, we construct the free weighted commutative differential dendriform algebra.

\begin{coro}
Let $V$ be a \bfk-module. Then
the quadruple $\left(T^{+}(V), \prec_V,\succ_V, d_V\right)$, together with the natural embedding $V\hookrightarrow T^{+}(V)$,
is the free commutative differential dendriform algebra of weight $\lambda$ on $V$.
\mlabel{coro:freecddend}
\end{coro}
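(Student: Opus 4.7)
The plan is to specialize Theorem~\mref{thm:difftridend} to $q=0$ with the module $V$ viewed as a commutative differential $\bfk$-algebra by endowing it with the zero multiplication and a chosen linear endomorphism $d_0\colon V\to V$ (which $d_V$ is understood to extend). Under these choices the Leibniz identity on $V$ is vacuous, so $(V,d_0)$ is a commutative differential $\bfk$-algebra of weight $\lambda$; the quasi-shuffle $\ast_0$ reduces to the shuffle product, so that the operations $\prec_V,\succ_V$ produced by Theorem~\mref{thm:difftridend} coincide with those of Lemma~\mref{thm:tridend}\mref{item:dend}; and the operation $\bullet_V$ on $T^+(V)$ vanishes identically because $a_1b_1=0$ in $V$. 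Theorem~\mref{thm:difftridend} then realises $(T^+(V),\prec_V,\succ_V,0,d_V)$, with $d_V$ the unique derivation of weight $\lambda$ extending $d_0$ (via Proposition~\mref{prop:wdiff}), as the free commutative differential $0$-tridendriform algebra of weight $\lambda$ on $(V,d_0)$.

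I then establish the elementary equivalence between commutative differential dendriform algebras and commutative differential $0$-tridendriform algebras in which $\bullet=0$. When $\bullet$ vanishes the four axioms of Definition~\mref{defn:qtrid} involving $\bullet$ and the Leibniz identity~\eqref{eq:difftri3} all trivialise, while the remaining three axioms, read with $\star_0=\prec+\succ$, are precisely the dendriform axioms of Definition~\mref{defn:dend}, and Eqs.~\eqref{eq:difftri2},~\eqref{eq:difftri4} coincide with Eqs.~\eqref{eq:diffdend2},~\eqref{eq:diffdend3}. Consequently every commutative differential dendriform algebra $(D,\prec_D,\succ_D,d_D)$ becomes, on setting $\bullet_D:=0$, a commutative differential $0$-tridendriform algebra, and the morphisms of the two structures coincide verbatim.

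The universal property then transfers directly. Given such a $(D,\prec_D,\succ_D,d_D)$ and a linear map $\psi\colon V\to D$ satisfying $d_D\psi=\psi d_0$, the condition $\psi(v)\bullet_D\psi(w)=0$ is automatic (both factors being zero), so $\psi$ is a commutative differential algebra morphism $(V,0,d_0)\to(D,\bullet_D,d_D)$. Theorem~\mref{thm:difftridend} supplies a unique commutative differential $0$-tridendriform morphism $\bar\psi\colon T^+(V)\to D$ extending $\psi$, and by the equivalence of the previous paragraph $\bar\psi$ is exactly the unique commutative differential dendriform morphism extending $\psi$. Combined with the natural embedding $V\hookrightarrow T^+(V)$ from Lemma~\mref{thm:tridend}\mref{item:dend}, this is the asserted universal property.

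The principal subtlety I anticipate is the bookkeeping of the middle paragraph: one must check that the identification $\{\text{comm.\ diff.\ }0\text{-tridend.\ with }\bullet=0\}=\{\text{comm.\ diff.\ dend.}\}$ is a bijection of categories, so that both existence and uniqueness in the universal property pass through the identification without loss. Once this is in place the corollary follows from Theorem~\mref{thm:difftridend} with no further computation.
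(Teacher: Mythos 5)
Your proposal is correct and follows essentially the same route as the paper, which proves the corollary by invoking Theorem~\mref{thm:difftridend} with $q=0$ and the zero multiplication $a\bullet b:=0$ on $V$. You have merely spelled out the details the paper leaves implicit (that $\ast_0$ is the shuffle product, that $\bullet_V$ vanishes on $T^+(V)$, and that commutative differential $0$-tridendriform algebras with $\bullet=0$ are exactly commutative differential dendriform algebras), all of which are accurate.
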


\begin{proof}
It follows from Theorem~\ref{thm:difftridend} by taking $q=0$ and $a\bullet b :=0$ for $a, b\in V$.
\end{proof}

\section{Free weighted differential $q$-tridendriform and dendriform algebras}
\mlabel{sec:freedifftridend}
In this section, we construct free differential $q$-tridendriform algebras of weight $\lambda$ in terms of valently decorated Schr\"oder trees.
For this, let us first recall the construction of free $q$-tridendriform algebras. See~\cite{BR, Chapoton,LoRo98,LoRo04} for more details.

\subsection{Free weighted differential $q$-tridendriform algebras}
\subsubsection{Free $q$-tridendriform algebras}\mlabel{sub:trid}
 Let $X$ be a set. For $n\geq 0,$
let $T_{n,\,X}$ be the set of planar rooted trees with $n+1$ leaves
and with vertices valently decorated by elements of $X$, in the sense that if a vertex has valence $k$, then the vertex is decorated by an element in $X^{k-1}$. For example, the vertex of $\stree x$ is decorated by $x\in X$ while the vertex of
$\XX{\xx002 \xxh0023{x\ }{0.5} \xxh0012{\ \,y}{0.4}}$ is decorated by $(x,y)\in X^{2}$.
Here are the first few of them:
\begin{align*}
T_{0,\,X}=& \ \{|\},\qquad T_{1,\,X}=\left\{\stree x\Bigm|x\in X\right\},
\qquad T_{2,\,X}=\left\{
\XX{\xxr{-5}5
\xxhu00x \xxhu{-5}5y
}, \,
\XX{\xxl55
\xxhu00x \xxhu55y
}, \,
\XX{\xx002
\xxh0023{x\ }{0.5} \xxh0012{\ \,y}{0.4}
}\Bigm|x,y\in X
\right\},\\
T_{3,\,X}=& \ \left\{
\XX[scale=1.6]{\xxr{-4}4\xxr{-7.5}{7.5}
\xxhu00x \xxhu[0.1]{-4}4{\,y} \xxhu[0.1]{-7.5}{7.5}{z}
},
\XX[scale=1.6]{\xxr{-5}5\xxl{-2}8
\xxhu00x
\xxhu[0.1]{-5}5{y} \xxhu[0.1]{-2}8{z}
},
\XX[scale=1.6]{\xxr{-4}4\xx{-4}42
\xxhu00x \xxh{-4}423{y\ \,}{0.3} \xxh{-4}412{\ \, z}{0.3}
},
\XX[scale=1.6]{\xx00{1.6}\xx00{2.4}
\xxh001{1.6}{\ \ \,z}{0.6}
\xxh00{1.6}{2.4}{y}{0.5}
\xxh00{2.4}3{x\ \ }{0.6}
},
\XX[scale=1.6]{\xxr{-6}6\xxl66
\xxhu00{x} \xxhu66{z} \xxhu[0.1]{-6}6{y}
},
\XX[scale=1.6]{\xxlr0{7.5} \draw(0,0)--(0,0.75);
\xxh0023{x\ \,}{0.3} \xxhu[0.12]0{7.5}{z}
\xxh0012{\ \ y}{0.22}
},\ldots\Bigg|\,x,y,z\in X
\right\}.
\end{align*}
Elements in $\bigcup\limits_{n\geq 0} T_{n, X}$ are called {\bf valently decorated Schr\"oder trees} by $X$.

\begin{remark}
In the graphical representation above, the edge pointing downwards is the root, the upper edges are the leaves. The other edges, joining two internal vertices, are called internal edges.
\end{remark}
\nc\ta{{\tau}}
\nc\sigm{{\sigma}}
For $\ta^{(i)}\in T_{n_i,\,X}$ with $0\leq i\leq m$ and $x_1,\ldots,x_m\in X,$ the grafting $\bigvee$ of $\ta^{(i)}$ over $(x_1,\ldots,x_m)$ is
\begin{equation}\mlabel{eq:texpr}
\ta = \bigvee\nolimits_{x_1,\ldots,x_m}^{m+1}(\ta^{(0)},\cdots,\ta^{(m)}),
\end{equation}
obtained by joining $m+1$ roots of $\ta^{(i)}$ to a new root valently decorated by
$(x_1, \ldots, x_m)$.
Conversely, any valently decorated Schr\"oder tree $\ta$ can be uniquely expressed as such a grafting of lower depth valently decorated Schr\"oder trees in Eq.~(\mref{eq:texpr}). The {\bf depth} $\dep{(\ta)}$ of a rooted tree $\ta$ is the maximal length of linear chains of vertices from the root to the leaves of the tree.
For example,
\[\dep\biggl(\stree x\biggr) = 1\, \text{ and } \, \dep\biggl(\XX{\xxr{-5}5
\xxhu00x \xxhu{-5}5y
}\biggr) = 2.\]
The {\bf breadth} $\bre(\ta)$ of $\ta$ in Eq.~(\mref{eq:texpr}) is defined to be $\bre(\ta):=m+1$. For example,
\begin{align*}\XX{\xx002
\xxh0023{x\ }{0.5} \xxh0012{\ \,y}{0.4}
}=&\, \bigvv x,y;3;(|,|,|),\quad  \bre\biggl(\XX{\xx002
\xxh0023{x\ }{0.5} \xxh0012{\ \,y}{0.4}
}\biggr) = 3\, \text{ and }\\
\ta =& \XX[scale=1.6]{\xxlr0{7.5}\xxl77 \draw(0,0)--(0,0.75);
\xxh0023{x\ \,}{0.3} \xxhu[0.1]0{7.5}{y} \xxhu77{u}
\node at (0.15,0.38) {$z$};
}
=\bigvv x,z;3;\biggl(|,\stree y,\stree u\biggr), \quad \bre(\ta) = 3.
\end{align*}

Let $\mathrm{DT}(X):=\underset{n\geq 1}\bigoplus\,\bfk T_{n,\,X}.$ Define binary operations $\prec,\succ$ and $\bullet$ on $\mathrm{DT}(X)$ recursively on $\dep(\ta)+\dep(\sigm)$ as follows.
\begin{enumerate}
\item For $\ta\in T_{n,\,X}$ with $n\geq 1$, put
$$|\succ \ta:=\ta\prec |:=\ta,\, |\prec \ta:=\ta\succ |:=0 \text{ and }\, |\bullet \ta:=\ta\bullet |:=0.$$

\item For $\ta=\bigvee_{x_1,\ldots,x_m}^{m+1}(\ta^{(0)},\cdots,\ta^{(m)})$ and $\sigm=\bigvee_{y_1,\ldots,y_n}^{n+1}(\sigm^{(0)},\cdots,\sigm^{(n)}),$   set
{\small{\begin{align}
 \ta\prec \sigm:=& \ \bigvee\nolimits_{x_1,\ldots,x_m}^{m+1}(\ta^{(0)},\cdots,\ta^{(m-1)},\ta^{(m)}\succ \sigm+\ta^{(m)}\prec \sigm+q \ta^{(m)}\bullet \sigm),\nonumber \\
 \ta\succ \sigm:=& \ \bigvee\nolimits_{y_1,\ldots,y_n}^{n+1}(\ta\succ \sigm^{(0)}+\ta\prec \sigm^{(0)}+q \ta\bullet \sigm^{(0)},\sigm^{(1)},\cdots,\sigm^{(n)}),\nonumber\\
 \ta\bullet \sigm:=& \ \bigvee\nolimits_{x_1,\ldots,x_m,\,y_1,\ldots,y_n}^{m+n+1}(
\ta^{(0)},\cdots,\ta^{(m-1)},\ta^{(m)}\succ \sigm^{(0)}+\ta^{(m)}\prec \sigm^{(0)}+q \ta^{(m)}\bullet \sigm^{(0)},\sigm^{(1)},\cdots,\sigm^{(n)}). \nonumber
\end{align}}}
\end{enumerate}
Here we employ the convention that
$|\prec |+|\succ |+q\, |\bullet |=|$ provided $\ta^{(m)}=|=\sigm^{(0)}$.

Denote $j_X:X\ra\mathrm{DT}(X),\, x\mapsto \stree x$.

\begin{lemma}\cite{BR}\label{thm:freeqtri}
Let $X$ be a set and $q\in \bfk$. Then the quadruple $(\mathrm{DT}(X),\prec,\succ,\bullet)$, together with the map $j_X$, is the free $q$-tridendriform algebra on $X$.
\mlabel{thm:freetridendriform}
\end{lemma}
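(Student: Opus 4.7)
The plan is to verify two claims: that $(\mathrm{DT}(X),\prec,\succ,\bullet)$ is itself a $q$-tridendriform algebra, and that together with $j_X$ it has the required universal property.

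For the first claim, I would check the seven axioms of Definition~\ref{defn:qtrid} by induction on the combined depth $\dep(\tau_1)+\dep(\tau_2)+\dep(\tau_3)$ of the three arguments. The base cases, in which some $\tau_i$ equals $|$, reduce to the prescribed boundary values $|\succ\tau = \tau\prec| = \tau$ and $|\prec\tau = \tau\succ| = |\bullet\tau = \tau\bullet| = 0$. In the inductive step, expressing each argument as a grafting $\bigvee_{x_1,\ldots,x_m}^{m+1}(\tau^{(0)},\ldots,\tau^{(m)})$ and expanding both sides via the recursive rules, each side becomes a grafting with the same root decoration whose inner slots agree by the induction hypothesis. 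The most delicate reduction is to the associativity of $\star_q = \prec + \succ + q\bullet$ on strictly smaller trees, which itself follows from the seven axioms at lower depth and is therefore part of the induction hypothesis.

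For the universal property, uniqueness of $\bar f$ follows from the observation that $\mathrm{DT}(X)$ is generated as a $q$-tridendriform algebra by $j_X(X)$. This is established by induction on the number of internal vertices using the identities
\[
\bigvee\nolimits_{x}^{2}(\tau^{(0)},\tau^{(1)}) = (\tau^{(0)}\succ\stree{x})\prec\tau^{(1)}, \qquad
\stree{x_1}\bullet\stree{x_2}\bullet\cdots\bullet\stree{x_m} = \bigvee\nolimits_{x_1,\ldots,x_m}^{m+1}(|,\ldots,|),
\]
together with their analogues in which middle leaves are replaced by nontrivial subtrees through further applications of $\prec, \succ$ and $\bullet$; each of these formulas is a direct consequence of the recursive definitions. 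For existence, set $\bar f(\stree{x}) := f(x)$ and extend to general trees by transferring the above decompositions to $T$, replacing each $\prec, \succ, \bullet$ by its counterpart $\prec_T, \succ_T, \bullet_T$.

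The main obstacle is to verify that $\bar f$ is a $q$-tridendriform morphism, i.e.\ $\bar f(\tau\prec\sigma) = \bar f(\tau)\prec_T\bar f(\sigma)$ and similarly for $\succ$ and $\bullet$. I would prove this by induction on $\dep(\tau)+\dep(\sigma)$: expand $\tau\prec\sigma$ (resp.\ $\tau\succ\sigma$, $\tau\bullet\sigma$) via its recursive definition in $\mathrm{DT}(X)$, apply $\bar f$, use the induction hypothesis to commute $\bar f$ with the inner operations on strictly smaller trees, and finally reassemble the result as $\bar f(\tau)\prec_T\bar f(\sigma)$ by invoking the $q$-tridendriform axioms of $T$. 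The subtle step is the final reassembly at the junctions produced by the $\bullet$-concatenation of root decorations, where the axioms of $T$ must be applied in just the right order.
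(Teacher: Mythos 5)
The paper does not prove this lemma; it is quoted from Burgunder--Ronco \cite{BR} as a known result, so there is no internal proof to compare your argument against. Your outline is the standard direct verification and I find no gap in it. The two generating identities you invoke do check out against the recursive definitions: with the convention $|\prec|+|\succ|+q\,|\bullet|=|$, one gets $\tau^{(0)}\succ(|\vee_x|)=\tau^{(0)}\vee_x|$ and then $(\tau^{(0)}\succ(|\vee_x|))\prec\tau^{(1)}=\tau^{(0)}\vee_x\tau^{(1)}$, and the $\bullet$-product of corollas concatenates root decorations; these are exactly the decompositions the paper itself uses later (in the proof of Theorem~\ref{thm:freedifftridendriform}, where a breadth-$(m+1)$ tree is written as a $\bullet$-product of a breadth-$2$ grafting with a tree whose first slot is $|$). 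Your identification of the delicate point in the axiom check is also accurate: each of the seven relations, expanded via the recursion, either matches slot-by-slot outright or reduces to associativity of $\star_q$ applied to a triple in which one argument has strictly smaller depth, and since $\star_q$-associativity is the weighted sum of the seven axioms, it is legitimately available from the induction hypothesis on total depth. The only points worth making explicit in a full write-up are (i) that $\bar f$ is well defined because each basis tree is assigned one canonical decomposition, with the morphism property then carrying the burden of consistency, and (ii) that the base cases involving the auxiliary symbol $|$ never require $\bar f(|)$ to be defined, since $|\notin\mathrm{DT}(X)$ and only enters through the boundary conventions.
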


\subsubsection{Free weighted differential $q$-tridendriform algebras}
In this subsection, we aim to construct the free differential $q$-tridendriform algebra of weight $\lambda$ in terms of valently decorated Schr\"oder trees.

Let $X$ be a set. Denote
$\Delta X :=X \times \mathbb{N}=\left\{x^{(n)} \mid x \in X, n \geq 0\right\}$.
By Lemma~\ref{thm:freetridendriform}, the quadruple $(\mathrm{DT}(\Delta X),\prec,\succ,\bullet)$ is the free $q$-tridendriform algebra on $\Delta X$,  with $\prec,\succ,\bullet$ given recursively on $\dep(\ta)+\dep(\sigm)$ as follows.
\begin{enumerate}
\item For $\ta\in T_{n,\, \Delta X}$ with $n\geq 1$, define
$$|\succ \ta:=\ta\prec |:=\ta,\, |\prec \ta:=\ta\succ |:=0 \text{ and }\, |\bullet \ta:=\ta\bullet |:=0.$$
\item For $\ta=\bigvee_{x_1^{(r_1)},\ldots,x_m^{(r_m)}}^{m+1}(\ta^{(0)},\cdots,\ta^{(m)})$ and $\sigm=\bigvee_{y_1^{(s_1)},\ldots,y_n^{(s_n)}}^{n+1}(\sigm^{(0)},\cdots,\sigm^{(n)}),$ set
{\small{\begin{align}
 \ta\prec \sigm:=& \ \bigvee\nolimits_{x_1^{(r_1)},\ldots,x_m^{(r_m)}}^{m+1}(\ta^{(0)},\cdots,\ta^{(m-1)},\ta^{(m)}\succ \sigm+\ta^{(m)}\prec \sigm+q \ta^{(m)}\bullet \sigm), \nonumber\\
 \ta\succ \sigm:=& \ \bigvee\nolimits_{y_1^{(s_1)},\ldots,y_n^{(s_n)}}^{n+1}(\ta\succ \sigm^{(0)}+\ta\prec \sigm^{(0)}+q \ta\bullet \sigm^{(0)},\sigm^{(1)},\cdots,\sigm^{(n)}), \nonumber\\
 %
 \ta\bullet \sigm:=& \ \bigvee\nolimits_{x_1^{(r_1)},\ldots,x_m^{(r_m)},\,y_1^{(s_1)},\ldots,y_n^{(s_n)}}^{m+n+1}(
\ta^{(0)},\cdots,\ta^{(m-1)},\ta^{(m)}\succ \sigm^{(0)}+\ta^{(m)}\prec \sigm^{(0)}+q \ta^{(m)}\bullet \sigm^{(0)},\sigm^{(1)},\cdots,\sigm^{(n)}).\nonumber
\end{align}}}
\end{enumerate}

By the freeness of $\mathrm{DT}(\Delta X)$ as a $q$-tridendriform algebra on $\Delta X$, each valently decorated Schr\"{o}der tree $\ta\in \mathrm{DT}(\Delta X)$ can be uniquely expressed as
\begin{equation*}
  \ta=f_\ta(x_1^{(r_1)},\cdots, x_n^{(r_n)})
\end{equation*}
for some $x_1^{(r_1)},\cdots, x_n^{(r_n)}\in\Delta X$ and some $n$-arity operation $f_\ta$ in the operad of $q$-tridendriform algebras.

In terms of Proposition~\mref{prop:wdiff}, the map
$$\Delta X \rightarrow \mathrm{DT}(\Delta X), \, x^{(n)}\mapsto x^{(n+1)}$$
extends to a unique derivation
$d_X: \mathrm{DT}(\Delta X) \rightarrow \mathrm{DT}(\Delta X)$
by
\begin{eqnarray}\label{eq:defndiff}
d_X(\ta)\nonumber&=&d_X\big(f_\ta(x_1^{(r_1)},\cdots, x_n^{(r_n)})\big)\nonumber\\
&=&
\sum_{k=1}^{n} \lambda^{k-1}\bigg(\sum_{1\leq i_1<\cdots< i_k \leq n}
f_\ta(x_1^{(r_1)},\cdots, x_{i_1}^{(r_{i_1}+1)},\cdots,x_{i_2}^{(r_{i_2}+1)},\cdots,x_{i_k}^{(r_{i_k}+1)},
\cdots,x_n^{(r_n)})\bigg).
\end{eqnarray}
For example, for
$$\ta=\bigvv {x_2^{(r_2)}};2;(\Dtree{x_1^{(r_1)}},\Dtree{x_3^{(r_3)}}),$$
we have
$$\ta=\biggl(\Dtree{x_1^{(r_1)}}\succ\Dtree{x_2^{(r_2)}}\biggr)\prec \Dtree{x_3^{(r_3)}}=:f_\ta({x_1^{(r_1)}},{x_2^{(r_2)}},{x_3^{(r_3)}}),$$
and so
\begin{eqnarray*}
d_X(\ta)&=&d_X\big(f_\ta({x_1^{(r_1)}},{x_2^{(r_2)}},{x_3^{(r_3)}})\big)\\
&=&
f_\ta({x_1^{(r_1+1)}},{x_2^{(r_2)}},{x_3^{(r_3)}})+f_\ta({x_1^{(r_1)}},{x_2^{(r_2+1)}},{x_3^{(r_3)}})+f_\ta({x_1^{(r_1)}},{x_2^{(r_2)}},{x_3^{(r_3+1)}})\\
&&+\lambda f_\ta({x_1^{(r_1+1)}},{x_2^{(r_2+1)}},{x_3^{(r_3)}})+\lambda f_\ta({x_1^{(r_1+1)}},{x_2^{(r_2)}},{x_3^{(r_3+1)}})+\lambda f_\ta({x_1^{(r_1)}},{x_2^{(r_2+1)}},{x_3^{(r_3+1)}})\\
&&+\lambda^2 f_\ta({x_1^{(r_1+1)}},{x_2^{(r_2+1)}},{x_3^{(r_3+1)}})\\
&=& \bigvv {x_2^{(r_2)}};2;(\Dtree{x_1^{(r_1+1)}},\Dtree{x_3^{(r_3)}})+\bigvv {x_2^{(r_2+1)}};2;(\Dtree{x_1^{(r_1)}},\Dtree{x_3^{(r_3)}})+\bigvv {x_2^{(r_2)}};2;(\Dtree{x_1^{(r_1)}},\Dtree{x_3^{(r_3+1)}})\\
&&+\lambda\bigvv {x_2^{(r_2+1)}};2;(\Dtree{x_1^{(r_1+1)}},\Dtree{x_3^{(r_3)}})
+\lambda \bigvv {x_2^{(r_2)}};2;(\Dtree{x_1^{(r_1+1)}},\Dtree{x_3^{(r_3+1)}})
+\lambda \bigvv {x_2^{(r_2+1)}};2;(\Dtree{x_1^{(r_1)}},\Dtree{x_3^{(r_3+1)}})\\
&&+\lambda^2\bigvv {x_2^{(r_2+1)}};2;(\Dtree{x_1^{(r_1+1)}},\Dtree{x_3^{(r_3+1)}}).
\end{eqnarray*}

We state the following result as a preparation.

\begin{prop}
Let $X$ be a set and $\lambda, q\in \bfk$. Then the quadruple $(\mathrm{DT}(\Delta X),\prec,\succ, \bullet)$ equipped with the operator $d_X$ given in Eq.~(\mref{eq:defndiff}) is a differential $q$-tridendriform algebra of weight $\lambda$ on $X$.
\mlabel{pp:difftri}
\end{prop}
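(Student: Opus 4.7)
The plan is to reduce the claim entirely to Proposition~\mref{prop:wdiff} applied to the operad $\cal P$ of $q$-tridendriform algebras, thereby avoiding any induction on tree depth. First, by Lemma~\mref{thm:freetridendriform}, $(\mathrm{DT}(\Delta X),\prec,\succ,\bullet)$ is the free $q$-tridendriform algebra on $\Delta X$, so as a $\cal P$-algebra it is canonically isomorphic to $\cal P(\bfk\Delta X)$. This already supplies the $q$-tridendriform structure part of the statement; the only remaining task is to verify that $d_X$ is a weighted derivation in the operadic sense of Definition~\mref{defn:dpo}.

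Next, I would set $\phi:\Delta X\ra \mathrm{DT}(\Delta X)$, $x^{(n)}\mapsto x^{(n+1)}$, identifying each element of $\Delta X$ with the corresponding one-vertex tree in $T_{1,\Delta X}$, and apply Proposition~\mref{prop:wdiff} to $\phi$. This produces a unique derivation $d_\phi$ of weight $\lambda$ on the $\cal P$-algebra $\mathrm{DT}(\Delta X)$, given by
\[
d_\phi\big(f_\ta(x_1^{(r_1)},\cdots,x_n^{(r_n)})\big)=\sum_{k=1}^{n}\lambda^{k-1}\sum_{1\leq i_1<\cdots<i_k\leq n} f_\ta\big(x_1^{(r_1)},\cdots,\phi(x_{i_1}^{(r_{i_1})}),\cdots,\phi(x_{i_k}^{(r_{i_k})}),\cdots,x_n^{(r_n)}\big).
\]
Substituting $\phi(x^{(r)})=x^{(r+1)}$ reproduces the defining formula~(\mref{eq:defndiff}) of $d_X$ verbatim, so $d_X=d_\phi$ and in particular $d_X$ is a derivation of weight $\lambda$ in the sense of Definition~\mref{defn:dpo}.

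Finally, I would translate the operadic derivation condition of Definition~\mref{defn:dpo} into the three concrete Leibniz rules of Definition~\mref{defn:diffqtri}. Since the operad of $q$-tridendriform algebras is binary, generated by $\prec,\succ,\bullet$, specialising the commutative diagram in Definition~\mref{defn:dpo} to $n=2$ and to each of these three generators yields, applied to $\prec(a,b)$, $\succ(a,b)$ and $\bullet(a,b)$, precisely Eqs.~(\mref{eq:difftri2}), (\mref{eq:difftri4}) and~(\mref{eq:difftri3}) respectively, as these are the only nontrivial cases required at the level of generators.

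The main obstacle, which is routine but worth flagging, is the compatibility check that the general operadic derivation of Definition~\mref{defn:dpo} restricts correctly to the weighted Leibniz rule on each binary generator of a binary quadratic operad; the coefficient $\lambda^{k-1}$ with a sum over $k$-subsets, specialised to $n=2$, must collapse to the familiar three-term expression $d(a)\cdot b + a\cdot d(b) + \lambda d(a)\cdot d(b)$. Once this bookkeeping is in place, no tree recursion or case analysis on $\dep(\ta)+\dep(\sigma)$ is needed: Lemma~\mref{thm:freetridendriform} supplies the $q$-tridendriform structure and Proposition~\mref{prop:wdiff} gives the derivation property of $d_X$ for free.
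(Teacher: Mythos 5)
Your proposal is correct and follows essentially the same route as the paper: the $q$-tridendriform structure comes from the freeness statement in Lemma~\mref{thm:freetridendriform}, and the derivation property of $d_X$ comes from applying Proposition~\mref{prop:wdiff} to the map $x^{(n)}\mapsto x^{(n+1)}$, which is exactly how Eq.~(\mref{eq:defndiff}) is obtained in the text preceding the proposition. Your additional remark spelling out that the operadic derivation condition of Definition~\mref{defn:dpo} specialises on the binary generators to the three Leibniz rules of Definition~\mref{defn:diffqtri} is a useful elaboration that the paper leaves implicit, but it is not a different argument.
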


\begin{proof}
It follows from that the quadruple $(\mathrm{DT}(\Delta X),\prec,\succ, {\bullet})$
is a $q$-tridendriform algebra by Lemma~\mref{thm:freetridendriform} and
$d_X$ is a derivation of weight $\lambda$ on it by Eq.~(\mref{eq:defndiff}).
\end{proof}

We are ready for our main result in this section.
Let $j_X: X\rightarrow \mathrm{DT}(\Delta X),\, x\mapsto \stree x$.

\begin{theorem}
Let $X$ be a set and $\lambda,q \in \bfk$. Then the triple $(\mathrm{DT}(\Delta X), d_X, j_X)$ is the free differential $q$-tridendriform algebra of weight $\lambda$ on $X$.
\mlabel{thm:freedifftridendriform}
\end{theorem}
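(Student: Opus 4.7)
The plan is to reduce the universal property to the combination of the freeness of $\mathrm{DT}(\Delta X)$ as a $q$-tridendriform algebra on $\Delta X$ (Lemma~\mref{thm:freeqtri}) and Proposition~\mref{prop:wdiff} governing derivations on free $\cal{P}$-algebras. Proposition~\mref{pp:difftri} has already established that $(\mathrm{DT}(\Delta X),\prec,\succ,\bullet,d_X)$ is a differential $q$-tridendriform algebra of weight $\lambda$, so only the universal property needs to be checked.

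To that end, fix any differential $q$-tridendriform algebra $(T,\prec_T,\succ_T,\bullet_T,d_T)$ of weight $\lambda$ and any set map $\phi:X\rightarrow T$. First I would extend $\phi$ to $\tilde{\phi}:\Delta X\rightarrow T$ by $\tilde{\phi}(x^{(n)}):=d_T^{\,n}(\phi(x))$, which is the only choice compatible with a differential morphism. Lemma~\mref{thm:freeqtri} applied to the set $\Delta X$ then produces a unique $q$-tridendriform algebra morphism $\bar{\phi}:\mathrm{DT}(\Delta X)\rightarrow T$ extending $\tilde{\phi}$, and since $j_X(x)=\stree{x^{(0)}}$ and $\tilde{\phi}(x^{(0)})=\phi(x)$, we have $\bar{\phi}\circ j_X=\phi$.

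The hard step will be verifying the compatibility $\bar{\phi}\circ d_X=d_T\circ\bar{\phi}$. For $\ta=f_\ta(x_1^{(r_1)},\ldots,x_n^{(r_n)})\in\mathrm{DT}(\Delta X)$, with $f_\ta$ the unique $n$-ary operation in the $q$-tridendriform operad determined by the tree, I would expand both sides via Proposition~\mref{prop:wdiff}. On the left, formula~\eqref{eq:defndiff} gives $d_X(\ta)$ as a sum $\sum_k\lambda^{k-1}\sum_{i_1<\cdots<i_k} f_\ta(\ldots,x_{i_j}^{(r_{i_j}+1)},\ldots)$, whose image under the $q$-tridendriform morphism $\bar{\phi}$ is $\sum_k\lambda^{k-1}\sum_{i_1<\cdots<i_k} f_\ta(\ldots,d_T^{\,r_{i_j}+1}(\phi(x_{i_j})),\ldots)$. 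On the right, applying Proposition~\mref{prop:wdiff} to the derivation $d_T$ on the $q$-tridendriform algebra $T$ expands $d_T(\bar{\phi}(\ta))=d_T\bigl(f_\ta(\tilde{\phi}(x_1^{(r_1)}),\ldots,\tilde{\phi}(x_n^{(r_n)}))\bigr)$ into the same sum, using $d_T(\tilde{\phi}(x_{i_j}^{(r_{i_j})}))=\tilde{\phi}(x_{i_j}^{(r_{i_j}+1)})$. The two expansions then match term by term.

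Finally, for uniqueness, any differential $q$-tridendriform morphism $\psi$ with $\psi\circ j_X=\phi$ must satisfy $\psi(\stree{x^{(n)}})=\psi(d_X^{\,n}(j_X(x)))=d_T^{\,n}(\phi(x))=\tilde{\phi}(x^{(n)})$, so $\psi$ agrees with $\bar{\phi}$ on the generating set $\Delta X$ of $\mathrm{DT}(\Delta X)$ and hence everywhere by Lemma~\mref{thm:freeqtri}. The only subtle point in the whole plan is the clean matching of the two Proposition~\mref{prop:wdiff}-expansions in the middle step; once the operation $f_\ta$ is pinned down by the freeness of $\mathrm{DT}(\Delta X)$ over $\Delta X$, the rest is bookkeeping.
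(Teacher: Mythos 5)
Your overall architecture matches the paper's: extend the set map to $\Delta X$ by $x^{(n)}\mapsto d_T^{\,n}(\phi(x))$, invoke the freeness of $\mathrm{DT}(\Delta X)$ as a $q$-tridendriform algebra to get $\bar{\phi}$, and then verify $\bar{\phi}\circ d_X=d_T\circ\bar{\phi}$. The divergence, and the gap, is in that last verification. You expand $d_T\bigl(f_\tau(\tilde{\phi}(x_1^{(r_1)}),\ldots,\tilde{\phi}(x_n^{(r_n)}))\bigr)$ by "applying Proposition~\ref{prop:wdiff} to the derivation $d_T$ on $T$," but Proposition~\ref{prop:wdiff} is a statement about derivations on the \emph{free} $\cal P$-algebra $\cal P(V)$, and $T$ is an arbitrary differential $q$-tridendriform algebra. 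What you actually need is that $d_T$, which by Definition~\ref{defn:diffqtri} is only assumed to satisfy the three \emph{binary} weighted Leibniz rules (Eqs.~\eqref{eq:difftri2}--\eqref{eq:difftri3}), obeys the full $n$-ary expansion
\[
d_T\bigl(f(t_1,\ldots,t_n)\bigr)=\sum_{k=1}^{n}\lambda^{k-1}\sum_{1\leq i_1<\cdots<i_k\leq n} f\bigl(t_1,\ldots,d_T(t_{i_1}),\ldots,d_T(t_{i_k}),\ldots,t_n\bigr)
\]
for every composite operation $f$ in the $q$-tridendriform operad. That implication (binary Leibniz rules propagate to all composites, i.e.\ Definition~\ref{defn:diffqtri} really is an instance of Definition~\ref{defn:dpo}) is true, but it is not free: it requires an induction on the structure of $f$, and that induction is precisely the content you dismiss as "bookkeeping." The paper does not take your shortcut; it proves the compatibility directly by a double induction on $\dep(\tau)$ and $\bre(\tau)$, at each step decomposing $\tau$ via the grafting as $(\tau^{(0)}\succ\,\cdot\,)\prec\tau^{(1)}$ or as a $\bullet$-product, and applying only the binary Leibniz identities on both sides. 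In effect, the paper's induction is the proof of the very lemma your argument presupposes.

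To repair your write-up you have two options: either carry out the paper's induction on depth and breadth, or state and prove (by induction on the arity/complexity of operations in the operad) the lemma that a linear map satisfying the binary weighted Leibniz rules on a $q$-tridendriform algebra satisfies the $n$-ary expansion for all operadic operations, and then cite Definition~\ref{defn:dpo} rather than Proposition~\ref{prop:wdiff} on the $T$-side. The rest of your argument is sound: the term-by-term matching via $d_T(\tilde{\phi}(x^{(r)}))=\tilde{\phi}(x^{(r+1)})$ is correct, and your explicit uniqueness argument (that any differential morphism is forced on $\Delta X$ because $\stree{x^{(n)}}=d_X^{\,n}(j_X(x))$, hence determined everywhere by freeness over $\Delta X$) is a worthwhile addition that the paper leaves implicit.
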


\begin{proof}
By Proposition~\ref{pp:difftri}, we are left to prove the universal property.
Let the quintuple $(T, \prec_T, \succ_T, \bullet_T, d_T)$  be a differential $q$-tridendriform algebra of weight $\lambda$ and let $f:X \rightarrow T$ be a set map.
To be compatible with the derivation, $f$ extends uniquely to a set map
\begin{equation*}
\tilde{f}:\Delta X\rightarrow T,\, x^{(n)}\mapsto d_T^n(f(x)).
\end{equation*}
By Lemma~\ref{thm:freetridendriform}, there is a unique $q$-tridendriform algebra homomorphism $\lbar{f}$ such that the following diagram is commutative
\[\xymatrix@C=2cm{
X\ar[r]\ar[dr]_{f} &\Delta X \ar@{.>}[d]_{\tilde{f}} \ar[r]^-{j_{\Delta X}}
                & \mathrm{DT}(\Delta X)
                \ar@{.>}[dl]_{\lbar{f}} \\
& T              }
\]
Here $j_{\Delta X}: \Delta X\rightarrow \mathrm{DT}(\Delta X),\, x^{(n)}\mapsto \stree {x^{(n)}}$.
It remains to show
\begin{equation}
\lbar{f}d_X(\ta) =d_T\lbar{f}(\ta),\quad \forall \ta\in \mathrm{DT}(\Delta X),
\mlabel{eq:dtf}
\end{equation}
which will be done by induction on $\dep(\ta)\geq 1$. For the initial step of $\dep({\ta})=1$, we reduce to
the induction on $\bre({\ta})\geq 2$. If $\bre({\ta})= 2$, we have ${\ta} = \Dtree{x^{(r_1)}}$ for some $x^{(r_1)}\in \Delta X$ and so
\begin{equation}
\begin{aligned}
\lbar{f}d_X({\ta}) =&\  \lbar{f}d_X\Big(\Dtree{x^{(r_1)}}\Big)
=\lbar{f}\Big(\Dtree{x^{(r_1+1)}}\Big) =\lbar{f} j_{\Delta X} (x^{(r_1+1)})=
 \tilde{f}(x^{(r_1+1)})\\
=& \ d^{r_1+1}_T(f(x)) = d_T(d^{r_1}_T(f(x)))
=d_T(\tilde{f}(x^{(r_1)})\\
=& d_T\bar{f}  j_{\Delta X}(x^{(r_1)})
=d_T\lbar{f}\Big(\Dtree{x^{(r_1)}}\Big) = d_T \lbar{f}({\ta}).
\mlabel{eq:initial}
\end{aligned}
\end{equation}
For a given $m\geq 2$, assume that Eq.~\eqref{eq:dtf} holds for $\bre({\ta})=m$, and consider the case of $\bre({\ta})=m+1$. Denote by
\begin{equation*}
{\ta}=\DDDtree{x_1^{(r_1)}}{x_m^{(r_m)}} = \Dtree{x_1^{(r_1)}}  \bullet  \DDDtree{x_2^{(r_2)}}{x_m^{(r_m)}}=: \Dtree{x^{(r_1)}_1} \bullet  {\ta}'.
\end{equation*}
Then
\begin{align*}
\lbar{f}d_X({\ta})=& \ \lbar{f}d_X\Big(\Dtree{x^{(r_1)}_1} \bullet  {\ta}'\Big)\\
=& \ \lbar{f}\Big(d_X\Big(\Dtree{x^{(r_1)}_1}\Big) \bullet {\ta}'
+\Dtree{x^{(r_1)}_1} \bullet d_X({\ta}')
+\lambda d_X\Big(\Dtree{x^{(r_1)}_1}\Big) \bullet d_X({\ta}')\Big)\quad\text{(by Eq.~\eqref{eq:difftri3})}\\
=& \Big(\lbar{f}d_X\Big(\Dtree{x^{(r_1)}_1}\Big)\bullet_T\lbar{f}({\ta}')+\lbar{f}
\Big(\Dtree{x^{(r_1)}_1}\Big)\bullet_T\lbar{f}
d_X({\ta}')
+\lambda\lbar{f}d_X\Big(\Dtree{x^{(r_1)}_1}\Big)\bullet_T\lbar{f}d_X({\ta}')\Big),\\
&\hspace{3cm}\text{(by $\lbar{f}$ being a $q$-tridendriform algebra homomorphism)}\\
=& \ d_T\lbar{f}\Big(\Dtree{x^{(r_1)}_1}\Big)\bullet_T\lbar{f}({\ta}')+ \lbar{f}\Big(\Dtree{x^{(r_1)}_1}\Big)\bullet_T d_T\lbar{f}({\ta}')
+\lambda d_T\lbar{f}\Big(\Dtree{x^{(r_1)}_1}\Big)\bullet_T d_T\lbar{f}({\ta}'),\\
&\hspace{3cm}\text{(by Eq.~\eqref{eq:initial} and the induction hypothesis on breadth)}\\
=& \ d_T\Big(\lbar{f}\Big(\Dtree{x^{(r_1)}_1}\Big)\bullet_T\lbar{f}({\ta}')\Big)\quad\text{(by Eq.~\eqref{eq:difftri3})}\\
=& \ d_T\lbar{f}\Big(\Dtree{x^{(r_1)}_1} \bullet  {\ta}'\Big)\quad\text{(by $\lbar{f}$ being a $q$-tridendriform algebra homomorphism)}\\
=& \ d_T\lbar{f}({\ta}).
\end{align*}

For the induction step $\dep({\ta})\geq 2$, we again reduce to the induction on
$\bre({\ta})\geq 2$. If $\bre({\ta})=2$, then
{\small{\begin{align*}
\lbar{f}d_X({\ta})=& \ \lbar{f}d_X\Big(\bigvv {x_1^{(r_1)}};2;({\ta}^{(0)},{\ta}^{(1)})\Big)
=\lbar{f}d_X\Big(\Big({\ta}^{(0)}\succ\Dtree{x^{(r_1)}_1}\Big)\prec {\ta}^{(1)}\Big)\\
=& \ \lbar{f}\Bigg(d_X\Big({\ta}^{(0)}\succ\Dtree{x^{(r_1)}_1}\Big)\prec {\ta}^{(1)}
+\Big({\ta}^{(0)}\succ\Dtree{x^{(r_1)}_1}\Big)\prec d_X({\ta}^{(1)})
+\lambda d_X\Big({\ta}^{(0)}\succ\Dtree{x^{(r_1)}_1}\Big)\prec d_X({\ta}^{(1)})\Bigg)\\
&\hspace{7cm}\text{(by Eq.~\eqref{eq:difftri2})}\\
=& \ \lbar{f}d_X\Big({\ta}^{(0)}\succ\Dtree{x^{(r_1)}_1}\Big)\prec \lbar{f}({\ta}^{(1)})
+\lbar{f}\Big({\ta}^{(0)}\succ\Dtree{x^{(r_1)}_1}\Big)\prec_T \lbar{f}d_X({\ta}^{(1)})\\
& \ +\lambda \lbar{f}d_X\Big({\ta}^{(0)}\succ\Dtree{x^{(r_1)}_1}\Big)\prec_T\lbar{f}d_X({\ta}^{(1)}) \hspace{0.5cm}\text{(by $\bar{f}$ being a $q$-tridendriform algebra homomorphism)}\\
=& \ \lbar{f}\Bigg(d_X({\ta}^{(0)})\succ \Dtree{x^{(r_1)}_1}+{\ta}^{(0)}\succ d_X\Big(\Dtree{x^{(r_1)}_1}\Big)+\lambda d_X({\ta}^{(0)})\succ d_X\Big(\Dtree{x^{(r_1)}_1}\Big)\Bigg)\prec_T\lbar{f}({\ta}^{(1)})\\
&+\Big(\lbar{f}({\ta}^{(0)})\succ_T\lbar{f}\Big(\Dtree{x^{(r_1)}_1}\Big)\Big)\prec_T \lbar{f}d_X({\ta}^{(1)})\\
&+\lambda  \lbar{f}\Bigg(d_X({\ta}^{(0)})\succ \Dtree{x^{(r_1)}_1}+{\ta}^{(0)}\succ d_X\Big(\Dtree{x^{(r_1)}_1}\Big)+\lambda d_X({\ta}^{(0)})\succ d_X\Big(\Dtree{x^{(r_1)}_1}\Big)\Bigg)\prec_T\lbar{f}d_X({\ta}^{(1)})\\
&\hspace{7cm}\text{(by Eq.~\eqref{eq:difftri4})}\\
=& \ \Bigg(\lbar{f}d_X({\ta}^{(0)})\succ_T\lbar{f}\Big(\Dtree{x^{(r_1)}_1}\Big)+\lbar{f}({\ta}^{(0)})\succ_T
\lbar{f}d_X\Big(\Dtree{x^{(r_1)}_1}\Big)+\lambda\lbar{f}d_X({\ta}^{(0)})\succ_T
\lbar{f}d_X\Big(\Dtree{x^{(r_1)}_1}\Big)\Bigg)\prec_T\lbar{f}({\ta}^{(1)})\\
&+\Big(\lbar{f}({\ta}^{(0)})\succ_T\lbar{f}\Big(\Dtree{x^{(r_1)}_1}\Big)\Big)\prec_T\lbar{f}d_X({\ta}^{(1)})\\
&+\lambda\Bigg(\lbar{f}d_X({\ta}^{(0)})\succ_T\lbar{f}\Big(\Dtree{x^{(r_1)}_1}\Big)
+\lbar{f}({\ta}^{(0)})\succ_T\lbar{f}d_X\Big(\Dtree{x^{(r_1)}_1}\Big)
+\lambda\lbar{f}d_X({\ta}^{(0)})\succ_T\lbar{f}d_X\Big(\Dtree{x^{(r_1)}_1}\Big)\Bigg)\prec_T\lbar{f}d_X({\ta}^{(1)})\\
&\hspace{3cm}\text{(by $\bar{f}$ being a $q$-tridendriform algebra homomorphism)}\\
=& \ \Bigg(d_T\lbar{f}({\ta}^{(0)})\succ_T\lbar{f}\Big(\Dtree{x^{(r_1)}_1}\Big)+\lbar{f}({\ta}^{(0)})\succ_T
d_T\lbar{f}\Big(\Dtree{x^{(r_1)}_1}\Big)+\lambda d_T\lbar{f}({\ta}^{(0)})\succ_T
d_T\lbar{f}\Big(\Dtree{x^{(r_1)}_1}\Big)\Bigg)\prec_T\lbar{f}({\ta}^{(1)})\\
&+\Big(\lbar{f}({\ta}^{(0)})\succ_T\lbar{f}\Big(\Dtree{x^{(r_1)}_1}\Big)\Big)\prec_T d_T\lbar{f}({\ta}^{(1)})\\
&+\lambda\Bigg(d_T\lbar{f}({\ta}^{(0)})\succ_T\lbar{f}\Big(\Dtree{x^{(r_1)}_1}\Big)
+\lbar{f}({\ta}^{(0)})\succ_T d_T\lbar{f}\Big(\Dtree{x^{(r_1)}_1}\Big)
+\lambda d_T\lbar{f}({\ta}^{(0)})\succ_T d_T\lbar{f}\Big(\Dtree{x^{(r_1)}_1}\Big)\Bigg)\prec_T d_T\lbar{f}({\ta}^{(1)})\\
&\hspace{3cm}\text{(by the induction hypothesis on depth and Eq.~\eqref{eq:initial})}\\
=& \ d_T\Bigg(\lbar{f}({\ta}^{(0)})\succ_T\lbar{f}\Big(\Dtree{x^{(r_1)}_1}\Big)\Bigg)\prec_T\lbar{f}({\ta}^{(1)})
+\Big(\lbar{f}({\ta}^{(0)})\succ_T\lbar{f}\Big(\Dtree{x^{(r_1)}_1}\Big)\Big)\prec_T d_T\lbar{f}({\ta}^{(1)})\\
&+\lambda  d_T\Bigg(\lbar{f}({\ta}^{(0)})\succ_T\lbar{f}\Big(\Dtree{x^{(r_1)}_1}\Big)\Bigg)\prec_T d_T\lbar{f}({\ta}^{(1)}) \hspace{2cm} \text{(by Eq.~\eqref{eq:difftri4})}\\
=& \ d_T\Bigg(\Big(\lbar{f}({\ta}^{(0)})\succ_T\lbar{f}\Big(\Dtree{x^{(r_1)}_1}\Big)\Big)\prec_T\lbar{f}({\ta}^{(1)})\Bigg)
\hspace{2cm} \text{(by Eq.~\eqref{eq:difftri2})}\\
=&\ d_T\lbar{f}\Big(\Big({\ta}^{(0)}\succ\Dtree{x^{(r_1)}_1}\Big)\prec {\ta}^{(1)}\Big)
\hspace{1cm}\text{(by $\bar{f}$ being a $q$-tridendriform algebra homomorphism)}\\
=&\ d_T\lbar{f}({\ta}).
\end{align*}}}
For a fixed $m\geq 2$, assume that Eq.~\eqref{eq:dtf} is valid for $\bre({\ta})=m$,
and consider the case of $\bre({\ta})=m+1$.
Denote by \begin{equation*}
{\ta}^{0,1}:=\bigvv x^{(r_1)}_1;2;({\ta}^{(0)},{\ta}^{(1)})\,\text{ and }\, \lbar{{\ta}} :=\bigvv x^{(r_2)}_2,\ldots,x^{(r_m)}_m;m;(|, {\ta}^{(2)},\cdots,{\ta}^{(m)}).
\end{equation*}
Then ${\ta} = {\ta}^{0,1} \bullet  \lbar{{\ta}}$ and
\begin{align*}
\lbar{f}d_X({\ta})=& \ \lbar{f}d_X({\ta}^{0,1} \bullet  \lbar{{\ta}})\\
=& \ \lbar{f}\Big(d_X({\ta}^{0,1}) \bullet  \lbar{{\ta}}+{\ta}^{0,1} \bullet  d_X( \lbar{{\ta}})+\lambda d_X({\ta}^{0,1}) \bullet  d_X( \lbar{{\ta}})\Big)\quad\text{(by Eq.~\eqref{eq:difftri3})}\\
=& \ \lbar{f}d_X({\ta}^{0,1})\bullet_T\lbar{f}( \lbar{{\ta}})+\lbar{f}({\ta}^{0,1})\bullet_T\lbar{f}d_X( \lbar{{\ta}})
+\lambda\lbar{f}d_X({\ta}^{0,1})\bullet_T\lbar{f}d_X( \lbar{{\ta}})\\
&\hspace{3cm}\text{(by $\lbar{f}$ being a $q$-tridendriform algebra homomorphism)}\\
=& \ d_T\lbar{f}({\ta}^{0,1})\bullet_T\lbar{f}( \lbar{{\ta}})+\lbar{f}({\ta}^{0,1})\bullet_T d_T\lbar{f}( \lbar{{\ta}})
+\lambda d_T\lbar{f}({\ta}^{0,1})\bullet_T d_T\lbar{f}( \lbar{{\ta}})\\
&\hspace{3cm}\text{(by the induction hypothesis on breadth)}\\
=& \ d_T\Big(\lbar{f}({\ta}^{0,1})\bullet_T\lbar{f}( \lbar{{\ta}})\Big)\hspace{1cm}\text{(by Eq.~\eqref{eq:difftri3})}\\
=&\ d_T\lbar{f}({\ta}^{0,1} \bullet   \lbar{{\ta}})\hspace{1cm}\text{(by $\lbar{f}$ being a $q$-tridendriform algebra homomorphism)}\\
=& \ d_T\lbar{f}({\ta}).
\end{align*}
This completes the proof.
\end{proof}

\subsection{Free weighted differential dendriform algebras}
\mlabel{sec:freediffdend}
In this subsection, we construct free differential dendriform algebras of weight $\lambda$ via planar binary trees.

\subsubsection{Free dendriform algebras}
\mlabel{sub:fdd}
Let $X$ be a set. For $n\geq 0$, let $Y_{n,\,X}$ be the set of planar binary trees with $n+1$ leaves and with internal vertices decorated by elements of $X$. The unique tree with one leaf is denoted by $|$.
Here are the first few of them.
\begin{align*}
Y_{0,\,X}&=\{|\},\ \
Y_{1,\,X}=\left\{ \stree x\Bigm| x\in X \right\},\ \
Y_{2,\,X}=\left\{
\XX{\xxr{-5}5
\xxhu00x \xxhu{-5}5y
}, \,
\XX{\xxl55
\xxhu00x \xxhu55y
}\Bigm| x,y\in X
\right\},\\
Y_{3,\,X}&=\left\{
\XX[scale=1.6]{\xxr{-4}4\xxr{-7.5}{7.5}
\xxhu00{x} \xxhu[0.1]{-4}4{y} \xxhu[0.1]{-7.5}{7.5}{z}
}, \,
\XX[scale=1.6]{\xxl44\xxl{7.5}{7.5}
\xxhu00{x} \xxhu[0.1]44{y} \xxhu[0.1]{7.5}{7.5}{z}
}, \,
\XX[scale=1.6]{\xxr{-6}6\xxl66
\xxhu00{x} \xxhu[0.1]66{y} \xxhu{-6}6{z}
}, \,
\XX[scale=1.6]{\xxr{-5}5\xxl{-2}8
\xxhu00x
\xxhu[0.1]{-5}5{y} \xxhu[0.1]{-2}8{z}
}, \,
\XX[scale=1.6]{\xxl55\xxr28
\xxhu00x
\xxhu[0.1]55{\,y} \xxhu[0.1]28{z}
}\, \Biggm| x,y,z\in X\right\}.
\end{align*}

For $\ta\in Y_{m,\,X}, \sigm\in Y_{n,\,X}$ and $x\in X$, the grafting $\vee_x$ of $\ta$ and $\sigm$ over the vertex $x$ is defined to be the planar binary tree $\ta\vee_x \sigm\in Y_{m+n+1,\,X}$ obtained by adding a new vertex decorated by $x$ and joining the roots of $\ta$ and $\sigm$ to the new vertex.

Given a planar binary tree $\ta\in Y_{n,\,X}$ not equal to $|$, there is a unique decomposition $\ta=\ta^{l}\vee_x \ta^{r}$ for some $x\in X$. For example,
$$ \stree x=|\vee_x|,\, \XX{\xxr{-5}5
\xxhu00x \xxhu{-5}5y
}=\stree y\vee_x|,\, \XX{\xxl55
\xxhu00x \xxhu55y
}=|\vee_x\stree y.$$

Let $\mathrm{DD}(X):=\underset{n\geq 1}\bigoplus\,\bfk Y_{n,\,X}$. Define binary operations $\prec$ and $\succ$ on $\mathrm{DD}(X)$
recursively as follows.
\begin{enumerate}
\item For ${\ta}\in Y_{n,\,X}$ with $n\geq 1$, define
$$|\succ {\ta}:={\ta}\prec |:={\ta}\,\text{ and }\, |\prec {\ta}:={\ta}\succ |:=0.$$

\item For ${\ta}={\ta}^{l}\vee_{x} {\ta}^{r}$ and $\sigm=\sigm^{l}\vee_{y} \sigm^{r},$ put
\begin{equation*}
{\ta}\prec \sigm:={\ta}^{l}\vee_{x} ({\ta}^{r}\prec \sigm+{\ta}^{r}\succ \sigm)\,\text{ and }\, {\ta}\succ \sigm:=({\ta}\prec \sigm^{l}+{\ta}\succ \sigm^{l})\vee_{y} \sigm^{r}.
\end{equation*}
\end{enumerate}

The following result is the construction of the free dendriform algebra.

\begin{lemma}\cite{LoRo98}
Let $X$ be a set. The triple $(\mathrm{DD}(X),\prec,\succ),$ together with the natural embedding
\[
j_X: X \rightarrow \mathrm{DD}(X), \, x\mapsto \stree x,
\]
is the free dendriform algebra on $X.$
\mlabel{thm:freedendriform}
\end{lemma}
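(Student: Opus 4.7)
The plan is to verify in turn that $(\mathrm{DD}(X),\prec,\succ)$ satisfies the three dendriform axioms and that the pair $(\mathrm{DD}(X),j_X)$ enjoys the universal property against every dendriform algebra.

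For the dendriform axioms I would proceed by induction on the total breadth $\bre(\tau_1)+\bre(\tau_2)+\bre(\tau_3)$ of the three inputs, using the recursive definitions of $\prec$ and $\succ$. The base cases (one of the three trees equal to $|$) are handled directly by the conventions $|\succ\tau=\tau\prec|=\tau$ and $|\prec\tau=\tau\succ|=0$; they turn each axiom into a tautology. For the inductive step, write each input as a grafting $\tau_i=\tau_i^{l}\vee_{x_i}\tau_i^{r}$, unfold both sides of the relevant axiom via the recursion, and use the induction hypothesis on the smaller subtrees. Only one axiom needs to be carried out in detail, as the other two are treated in the same way after swapping the roles of $\prec$ and $\succ$.

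For the universal property, the key observation is the identity
\[
\tau^{l}\vee_{x}\tau^{r} \;=\; (\tau^{l}\succ \stree{x})\prec \tau^{r},
\]
which is verified directly from the recursive definitions and remains valid when $\tau^{l}$ or $\tau^{r}$ equals $|$ thanks to the base cases. Iterating this identity shows that every element of $\mathrm{DD}(X)$ can be written as a word in the single-vertex trees $\stree{x}=j_X(x)$ built from $\prec$ and $\succ$; hence any dendriform homomorphism extending a set map $f\colon X\to D$ along $j_X$ is forced to satisfy
\[
\bar f(\tau^{l}\vee_{x}\tau^{r})=\bigl(\bar f(\tau^{l})\succ_{D} f(x)\bigr)\prec_{D}\bar f(\tau^{r}),
\]
which yields uniqueness. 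For existence, I would take the displayed formula (together with $\bar f(\stree{x}):=f(x)$ and the conventions for $|$) as the recursive definition of $\bar f$ on $\mathrm{DD}(X)$, and then show by induction on the sum of breadths that $\bar f$ intertwines the two pairs of products.

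The main obstacle is this last compatibility check. Here one must push the recursive definitions of $\prec$ and $\succ$ on $\mathrm{DD}(X)$ through $\bar f$ and reassemble the result in $D$; each step of the induction calls on exactly one of the three dendriform axioms in $(D,\prec_{D},\succ_{D})$, so this is the step where the dendriform relations in the target are genuinely invoked (and where the structure of the proof mirrors the axiom verification already carried out for $\mathrm{DD}(X)$). Once the compatibility with both $\prec$ and $\succ$ is established, the factorization $f=\bar f\circ j_X$ is immediate from the definition, and the lemma follows.
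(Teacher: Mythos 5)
The paper offers no proof of this lemma---it is imported wholesale from Loday--Ronco \cite{LoRo98}---so I am judging your sketch against the standard argument, which is indeed the route you take: verify the axioms by unfolding the recursive definitions, and derive the universal property from the identity $\tau^{l}\vee_{x}\tau^{r}=(\tau^{l}\succ\stree{x})\prec\tau^{r}$ (which you state correctly and which does survive the degenerate cases $\tau^{l}=|$ or $\tau^{r}=|$), generation by the one-vertex trees, and a recursive construction of $\bar f$ checked against $\prec$ and $\succ$ by induction. That architecture is sound.

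The one step that fails as written is your induction measure. For planar binary trees the paper's breadth is identically $2$ (every internal vertex has exactly two children), so $\bre(\tau_1)+\bre(\tau_2)+\bre(\tau_3)=6$ for every triple of trees in $\mathrm{DD}(X)$ and your induction never decreases; breadth is a meaningful parameter only for the Schr\"oder trees of the tridendriform construction. You should instead induct on the total number of internal vertices, $n_1+n_2+n_3$ for $\tau_i\in Y_{n_i,X}$: writing $\ast$ for $\prec+\succ$, the first axiom unfolds to comparing $\tau^{l}\vee_{x}\bigl((\tau^{r}\ast\sigma)\ast\upsilon\bigr)$ with $\tau^{l}\vee_{x}\bigl(\tau^{r}\ast(\sigma\ast\upsilon)\bigr)$, in which $\tau$ has been replaced by the strictly smaller $\tau^{r}$, so this measure genuinely decreases. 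Relatedly, ``one of the three trees equals $|$'' is not a base case: $|\notin\mathrm{DD}(X)$ and $|\prec|$, $|\succ|$ are not separately defined; the conventions for $|$ enter only when a subtree $\tau^{l}$ or $\tau^{r}$ degenerates inside the recursion. Two smaller inaccuracies: the three axioms are not all interchanged by swapping $\prec$ and $\succ$---the outer two are mirror images of each other, but the middle axiom $(a\succ b)\prec c=a\succ(b\prec c)$ is self-dual and needs its own (short, in fact essentially induction-free) computation---and since the outer two axioms reduce to associativity of $\ast$, which is itself the sum of all three axioms, the clean formulation proves the three identities simultaneously within one induction rather than one at a time. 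With the measure corrected and these points attended to, your proof goes through.
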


\subsubsection{Free weighted differential dendriform algebras}
\label{sub:fdda}
In this subsection, we give a differential version of Lemma~\mref{thm:freedendriform}.

Let $X$ be a set. Denote by
$\Delta X :=X \times \mathbb{N}=\left\{x^{(n)} \mid x \in X, n \geq 0\right\}$ as before. Define two binary operations $\prec$ and $\succ$ on $DD(\Delta X)$ recursively on $\dep(\ta) + \dep(\sigm)$ as follows.
\begin{enumerate}
\item For ${\ta}\in Y_{n,\,\Delta X}$ with $n\geq 1$,
set $$|\succ {\ta}:={\ta}\prec |:={\ta}\,\text{ and }\, |\prec {\ta}:={\ta}\succ |:=0.$$

\item For ${\ta}={\ta}^{l}\vee_{x^{(n)}} {\ta}^{r}$ and $\sigm=\sigm^{l}\vee_{y^{(m)}} \sigm^{r}$, define
\begin{equation*}
{\ta}\prec \sigm:={\ta}^{l}\vee_{x^{(n)}} ({\ta}^{r}\prec \sigm+{\ta}^{r}\succ \sigm),\quad {\ta}\succ \sigm:=({\ta}\prec \sigm^{l}+{\ta}\succ \sigm^{l})\vee_{y^{(m)}} \sigm^{r}.
\end{equation*}
\end{enumerate}

Since $\mathrm{DD}(\Delta X)$ is the free dendriform algebra on $\Delta X$ by Lemma~\mref{thm:freedendriform},
each $\ta\in \mathrm{DD}(\Delta X)$ can be uniquely written as
\begin{equation*}
  \ta=f_\ta(x_1^{(r_1)},\cdots, x_n^{(r_n)})
\end{equation*}
for some $x_1^{(r_1)},\cdots, x_n^{(r_n)}\in\Delta X$ and some $n$-arity operation $f_\ta$ in the operad of dendriform algebras.
Applying Proposition~\mref{prop:wdiff}, the map
$$\Delta X \rightarrow \mathrm{DD}(\Delta X), \, x^{(n)}\mapsto x^{(n+1)}$$ extends to a unique derivation
$d_X: \mathrm{DD}(\Delta X) \rightarrow \mathrm{DD}(\Delta X)$ with
\begin{eqnarray}\label{eq:defndiffdend}
d_X({\ta})\nonumber&=&d_X\big(f_{\ta}(x_1^{(r_1)},\cdots, x_n^{(r_n)})\big)\nonumber\\
&=&
\sum_{k=1}^{n} \lambda^{k-1}\bigg(\sum_{1\leq i_1<\cdots< i_k \leq n}
f_{\ta}(x_1^{(r_1)},\cdots, x_{i_1}^{(r_{i_1}+1)},\cdots,x_{i_2}^{(r_{i_2}+1)},\cdots,x_{i_k}^{(r_{i_k}+1)},
\cdots,x_n^{(r_n)})\bigg).\nonumber
\end{eqnarray}

Now we are ready for the main result in this subsection.

\begin{theorem}
Let $X$ be a set and $\lambda \in \bfk$. The quadruple $(\mathrm{DD}(\Delta X),\prec,\succ, d_X)$, together with the natural embedding
$$ j_X: X\rightarrow \mathrm{DD}(\Delta X),\, x\mapsto \stree x,$$
is the free differential dendriform algebra of weight $\lambda$ on $X$.
\mlabel{thm:freediffdendriform}
\end{theorem}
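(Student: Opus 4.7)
The plan is to follow the argument of Theorem~\ref{thm:freedifftridendriform} almost verbatim, but in the dendriform setting with only the two operations $\prec$ and $\succ$ and with planar binary trees in place of valently decorated Schr\"oder trees. First I would verify that $(\mathrm{DD}(\Delta X),\prec,\succ, d_X)$ is a differential dendriform algebra of weight $\lambda$: the dendriform axioms hold by Lemma~\ref{thm:freedendriform} applied to the set $\Delta X$, and $d_X$ is a derivation of weight $\lambda$ on $(\mathrm{DD}(\Delta X),\prec,\succ)$ directly by construction via Proposition~\ref{prop:wdiff}.

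Next, for the universal property, let $(T,\prec_T,\succ_T,d_T)$ be a differential dendriform algebra of weight $\lambda$ and $f:X\to T$ a set map. Compatibility with the derivations forces the extension $\tilde f:\Delta X\to T$, $x^{(n)}\mapsto d_T^n(f(x))$. Then Lemma~\ref{thm:freedendriform} gives a unique dendriform algebra homomorphism $\bar f:\mathrm{DD}(\Delta X)\to T$ with $\bar f\,j_{\Delta X}=\tilde f$. Uniqueness of $\bar f$ as a differential dendriform homomorphism follows from this together with the uniqueness in Lemma~\ref{thm:freedendriform}.

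The crux is then to establish
\[
\bar f\,d_X(\ta)=d_T\,\bar f(\ta)\quad\text{for all }\ta\in \mathrm{DD}(\Delta X),
\]
which I would prove by an outer induction on $\dep(\ta)\geq 1$ and an inner induction on $\bre(\ta)\geq 2$. The base case $\ta=\stree{x^{(r)}}$ is handled by the same direct computation as Eq.~\eqref{eq:initial}, using $d_X(\stree{x^{(r)}})=\stree{x^{(r+1)}}$ and $\tilde f(x^{(r)})=d_T^r(f(x))$. For the inductive step I would decompose $\ta=\ta^l\vee_{x^{(r)}}\ta^r$ into a dendriform expression (for instance $\ta=(\ta^l\succ \stree{x^{(r)}})\prec \ta^r$ when $\ta^r\neq|$, and symmetrically otherwise), expand both sides via the weighted Leibniz rules \eqref{eq:diffdend2}--\eqref{eq:diffdend3} in $T$ and via the recursive description of $d_X$ on $\mathrm{DD}(\Delta X)$, and then cancel terms using the induction hypothesis together with the fact that $\bar f$ respects $\prec$ and $\succ$. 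This exactly mirrors the three-case computation in the proof of Theorem~\ref{thm:freedifftridendriform}, dropping all occurrences of $\bullet$ and of the parameter $q$.

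The main obstacle is the bookkeeping in the double induction on depth and breadth, but the computation is strictly simpler than the tridendriform case already treated. In fact, observing that planar binary trees are precisely the valently decorated Schr\"oder trees every internal vertex of which has valence $2$, and that a dendriform algebra is the same as a $0$-tridendriform algebra with $\bullet=0$, one can alternatively obtain the result as the specialization $q=0$, $\bullet=0$ of Theorem~\ref{thm:freedifftridendriform}; this shortcut makes the present corollary essentially automatic once Theorem~\ref{thm:freedifftridendriform} is in hand.
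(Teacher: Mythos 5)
Your proposal matches the paper's treatment: the paper proves Theorem~\ref{thm:freediffdendriform} simply by declaring it similar to the proof of Theorem~\ref{thm:freedifftridendriform}, and the adaptation you describe (dropping $\bullet$ and $q$, with the breadth induction degenerating since planar binary trees always decompose with breadth two) is exactly that argument. Your alternative route via the $q=0$, $\bullet=0$ specialization is also in the spirit of how the paper obtains the commutative analogue in Corollary~\ref{coro:freecddend}.
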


\begin{proof}
The proof is similar to the one of Theorem~\ref{thm:freedifftridendriform}.
\end{proof}

\bigskip

\noindent
{{\bf Acknowledgments.} This work is supported by Natural Science Foundation of China (12071191, 12101183).
X. Gao is also supported by Innovative Fundamental Research Group Project of Gansu
Province (23JRRA684).  Y. Y. Zhang is also supported by China Postdoctoral Science Foundation (2021M690049).

\medskip

\noindent
{\bf Competing Interests.} There is no conflict of interest.

\medskip

\noindent
{\bf Data Availability.} The manuscript has no associated data.

\medskip

\end{document}